\numberwithin{equation}{section}
\newtheorem{theorem}{Theorem}[section]
\newtheorem{prop}[theorem]{Proposition}
\newtheorem{remark}[theorem]{Remark}
\newtheorem{definition}[theorem]{Definition}
\newtheorem{lemma}[theorem]{Lemma}
\newtheorem{example}[theorem]{Example}
\newtheorem{cor}[theorem]{Corollary}
\title{A new interpretation of Jimbo's formula for Painlev\'e VI}
\author{Zikang Wang, Yuancheng Xie and Xiaomeng Xu}
\date{}
\newcommand{\Addresses}{{
  \bigskip
  \footnotesize
\noindent \textsc{School of Mathematical Sciences, Peking University, Beijing 100871, China}\par\nopagebreak
  \textit{E-mail address}: \texttt{wzikang@stu.pku.edu.cn}
}\\
\\
\footnotesize
\noindent \textsc{
School of Mathematical Sciences \& Beijing International Center
for Mathematical Research, Peking University, Beijing 100871, China}\par\nopagebreak
  \textit{E-mail address}: \texttt{xieyuancheng@bicmr.pku.edu.cn}\\
\\
\footnotesize
\noindent \textsc{
School of Mathematical Sciences \& Beijing International Center
for Mathematical Research, Peking University, Beijing 100871, China}\par\nopagebreak
  \textit{E-mail address}: \texttt{xxu@bicmr.pku.edu.cn}
}
\begin{document}

\maketitle

\begin{abstract}
In this paper, we first give a new interpretation of Jimbo's boundary condition for the generic Painlev\'e VI transcendents, as the shrinking phenomenon in long time behaviour of the Jimbo-Miwa-Mori-Sato equation with rank $n=3$. We then interpret Jimbo's asymptotic and monodromy formula from the viewpoint of the isomonodromy deformation with respect to irregular singularities.
\end{abstract}

\section{Introduction}

In recent years there have been considerable interests in analyzing the differential equations 
for an 
$n\times n$
matrix-valued function 
$\Phi(u)=(\phi_{ij}(u))_{n\times n}$ depending on $n$ complex variables $u_1,...,u_n$,
\begin{equation}\label{isoeq}
\begin{aligned}
\frac{\partial}{\partial u_k} \phi_{k k}(u)
& =  0, \ \ \text{ for } k=1,...,n,\\
\frac{\partial}{\partial u_k} \phi_{i j}(u)
& =  
\left(
\frac{1}{u_k - u_j}-
\frac{1}{u_k - u_i} \right) 
\phi_{i k}(u)\phi_{k j}(u),
\quad
\ \ \text{ for } i, j \neq k,\\
\frac{\partial}{\partial u_k} \phi_{i k}(u) 
& = 
\sum_{j \neq k} 
\frac{\delta_{ij}\phi_{kk}-\phi_{i j}(u)}{u_k - u_j}
\phi_{j k}(u),
\quad \ \ \text{ for } 1\le i \neq k\le n,\\
\frac{\partial}{\partial u_k} \phi_{k j}(u)
& = 
\sum_{i \neq k} 
\phi_{k i}(u)
\frac{\phi_{i j}(u)-
\delta_{i j}\phi_{k k}}{u_k - u_i},
\quad \ \ \text{ for } 1\le j \neq k\le n.
\end{aligned}
\end{equation}

The system \eqref{isoeq} 
first appeared in the paper of 
Jimbo-Miwa-Mori-Sato \cite{JMMS1980}, 
and are also special cases 
of the equations of Jimbo-Miwa-Ueno \cite{JMU1981I}, as the isomonodromic deformation equation of a linear system with Poincar\'{e} rank $1$. 
Following Miwa \cite{Miwa1981}, 
its solutions $\Phi(u)$ 
have the strong Painlev\'{e} property: 
they are multi-valued meromorphic functions 
of $u_1,...,u_n$ and 
the branching occurs 
when $u$ moves along a loop around 
the fat diagonal
\[\Delta=
\{(u_1,...,u_n)\in \mathbb{C}^n
~|~
u_i = u_j, \text{for some $i\neq j$} \}.\]
Thus, according to the original idea of Painlev\'{e}, they may define a new class of special functions. Later on, it was shown by Harnad \cite{Harnad1994} that there is a duality of the JMMS equations that relates \eqref{isoeq} to the Schlesinger equations, and thus for $n=3$ \eqref{isoeq} is equivalent to the Painlev\'e VI equation (see Section \ref{sec:Harnadduality} for more details). Therefore, it can be seen as a higher rank Painlev\'e equation.

Since then, the sources of the interests in \eqref{isoeq} become
quite diverse, which include many subjects in mathematics and physics: the particular case (with skew-symmetric $\Phi(u)$) was studied by Dubrovin \cite{Dubrovin1996} in relation to the Gromov-Witten theory, and in general (with $\Phi(u)\in \frak{gl}_n(\mathbb{C})$) by Boalch \cite{Boalch2001, Boalch2006} in relation to complex reflections and Poisson-Lie groups; the system \eqref{isoeq} appeared in the work \cite{Bridgeland-ToledanoLaredo2012} of Bridgeland and Toledano Laredo in relation to the stability conditions; it is a time-dependent Hamiltonian system, whose quantization, following the work \cite{Reshetikhin1992} of 
Reshetikhin, is related to the Knizhnik--Zamolodchikov equation in the conformal field theory. 

Despite of the many applications, the behaviour of its solutions $\Phi(u)$ at critical points and monodromy problems are only studied recently in \cite{xu2019closure1, TangXu}. 
In this paper, we prove that in the case $n=3$, the boundary value and monodromy formula of equation \eqref{isoeq}, i.e., the following Theorem \ref{Thm:termwise} and \ref{thm: introcatformula}, recover Jimbo's formula for the asymptotics of Painlev\'e VI transcendents and the monodromy of the associated linear system respectively. 

The main result of \cite{TangXu} is the following shrinking phenomenon of solutions of
the nonlinear equation \eqref{isoeq} in long time $u_1,...,u_n$ behaviour. For example, for any generic solution $\Phi(u)$ with
any two eigenvalues 
$\lambda^{(n-1)}_1(u)$ and 
$\lambda^{(n-1)}_2(u)$ of 
the upper left $(n-1)\times (n-1)$ submatrix of $\Phi(u)$,
we have
\begin{align*}
\underset{u_n\rightarrow\infty}
{{\lim}}
|{\rm Re}(
\lambda^{(n-1)}_1(u)-
\lambda^{(n-1)}_2(u))|<1.
\end{align*}
To be more precise, given any $n\times n$ matrix $A$, 
we denote by $\delta_k A$ the
the upper left $k\times k$ submatrix and 
the diagonal part of the $n\times n$ matrix $A$, i.e.
\begin{eqnarray}\label{delta}
  (\delta_k A)_{i j} 
  & := & 
  \left\{\begin{array}{ll}
    A_{i j}; \qquad & 1 \leqslant i, j \leqslant k \quad
    \text{or}\quad i = j,\\
    0 ; \qquad &  \text{otherwise.}
  \end{array}\right.
\end{eqnarray} 
The following theorem was first proved for skew-Hermitian valued solutions in \cite{xu2019closure1}, and later on generalized to generic $\frak{gl}_n$ valued solutions in \cite{TangXu}. Note that for skew-Hermitian valued solution $\Phi(u)$, the boundary condition \eqref{boundcondtion} is empty.
\begin{theorem}
\label{Thm:termwise}\cite{TangXu}
For almost every solution $\Phi(u)=\Phi_n(u)$ of 
the isomonodromy equation \eqref{isoeq}, there exist $n \times n$ matrix-valued functions
$\Phi_{k}(u_1,...,u_k)$ for $k=1,...,n-1$ such that $\Phi_0:=\Phi_1$ is constant and
for $2\leqslant k \leqslant n$ we have
\begin{align} 
\label{limit1}
&\underset{u_k \rightarrow \infty}{\lim} 
\delta_{k - 1} \Phi_k  = 
\delta_{k - 1} \Phi_{k - 1},\\ 
\label{limit2}
&\underset{u_k \rightarrow \infty}{\lim} 
\left(
\frac{u_k-u_{k-1}}{u_{k-1}-u_{k-2}}
\right)^{
\delta_{k - 1} \Phi_{k - 1}} \Phi_k \left(
\frac{u_k-u_{k-1}}{u_{k-1}-u_{k-2}}
\right)^{
-\delta_{k - 1} \Phi_{k - 1}} =  \Phi_{k - 1}, \ k=3,...,n \\
&
\left(
{u_2-u_1}
\right)^{
\delta_{1} \Phi_{1}} \Phi_2 \left(
{u_2-u_{1}}
\right)^{
-\delta_{1} \Phi_{1}} =  \Phi_{1}
\end{align}
and
\begin{equation}
\label{boundcondtion}
|{\rm Re} (
\lambda^{(k-1)}_{i} - 
\lambda^{(k-1)}_{j}
) | < 1,
\quad
\text{for every $1\leqslant i,j\leqslant k-1$},
\end{equation}
where 
$\{\lambda^{(k-1)}_{i}\}_{i=1,\ldots,k-1}$
are the eigenvalues of the
upper left $(k-1)\times (k-1)$ submatrix
of $\Phi_{0}$.

Conversely, for any constant matrix $\Phi_0$ 
satisfying the boundary condition \eqref{boundcondtion},
there exists a unique solution $\Phi_n(u;\Phi_0)$ and a series of functions
with $\Phi_{n-1},\ldots,\Phi_{1}$
such that \eqref{limit1} and \eqref{limit2} hold.
\end{theorem}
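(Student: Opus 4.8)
The plan is to induct on the rank $n$, the inductive step being a one-parameter asymptotic analysis of the flow $\partial_{u_n}$ of \eqref{isoeq} as $u_n \to \infty$ with $u_1, \dots, u_{n-1}$ held in a fixed generic bounded region. Write an $n \times n$ solution $\Phi_n$ in block form: upper-left $(n-1) \times (n-1)$ block $B = (\phi_{ij})_{i,j<n}$, last column $c = (\phi_{in})_{i<n}$, last row $r = (\phi_{nj})_{j<n}$, corner $\phi_{nn}$. Expanding $\tfrac{1}{u_n - u_j}$ in powers of $u_n^{-1}$, the four lines of \eqref{isoeq} with $k = n$ read at leading order
\[
\partial_{u_n}\phi_{nn} = 0, \qquad \partial_{u_n} c \sim \tfrac{1}{u_n}(\phi_{nn} I - B)\,c, \qquad \partial_{u_n} r \sim \tfrac{1}{u_n}\,r\,(B - \phi_{nn} I),
\]
together with $\partial_{u_n}\phi_{ij} \sim \tfrac{u_i - u_j}{u_n^{2}}\,\phi_{in}\phi_{nj}$ for $i,j<n$. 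Thus $u_n = \infty$ is a regular singular point of the $u_n$-flow whose exponents are the eigenvalues of $\pm(\phi_{nn} I - B)$, and the whole analysis is organized around this Fuchsian structure.

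\emph{Forward direction.} From the leading column and row equations, $c \sim u_n^{\phi_{nn} I - B}c_0$ and $r \sim r_0\,u_n^{B - \phi_{nn} I}$, so in an eigenbasis of $B$ the rank-one product $\phi_{in}\phi_{nj}$ driving the block grows like $u_n^{\lambda^{(n-1)}_j - \lambda^{(n-1)}_i}$, where $\lambda^{(n-1)}_i$ are the eigenvalues of $B$. Feeding this into the block equation gives $\partial_{u_n}\phi_{ij} = O\big(u_n^{{\rm Re}(\lambda^{(n-1)}_j - \lambda^{(n-1)}_i) - 2}\big)$, integrable near $u_n = \infty$ exactly when ${\rm Re}(\lambda^{(n-1)}_j - \lambda^{(n-1)}_i) < 1$ for every pair, i.e.\ exactly \eqref{boundcondtion}. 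Hence for every solution whose block $\delta_{n-1}\Phi_n$ converges the bound \eqref{boundcondtion} is forced, and conversely the bound makes $\delta_{n-1}\Phi_n$ Cauchy; its limit defines $\delta_{n-1}\Phi_{n-1}$ and gives \eqref{limit1}. Conjugating by the gauge $\big(\tfrac{u_n - u_{n-1}}{u_{n-1} - u_{n-2}}\big)^{\delta_{n-1}\Phi_{n-1}}$ cancels the $u_n^{\pm(\phi_{nn} I - B)}$ growth of $c$ and $r$ and the $u_n^{\lambda^{(n-1)}_i - \lambda^{(n-1)}_j}$ distortion of the off-diagonal block entries, so the conjugated matrix converges; passing to the limit in the flows $\partial_{u_1}, \dots, \partial_{u_{n-1}}$ (whose $u_n$-dependent terms vanish as $u_n \to \infty$) shows the limit solves the rank-$(n-1)$ system \eqref{isoeq}, which is $\Phi_{n-1}$, proving \eqref{limit2}. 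Applying the inductive hypothesis to $\Phi_{n-1}$ produces $\Phi_{n-2}, \dots, \Phi_1 = \Phi_0$. That the block eigenvalue differences fall in the open strip for almost every solution, rather than only on a special subset, is read off from the monodromy parametrization (cf.\ Section~\ref{sec:Harnadduality}), in which these differences appear as principal logarithms of monodromy invariants; the branch-cut and resonant loci, where some ${\rm Re}(\lambda^{(n-1)}_i - \lambda^{(n-1)}_j) = 1$ or two block-eigenvalues collide, form the excluded measure-zero set behind ``almost every solution.''

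\emph{Converse direction.} Given a constant $\Phi_0$ all of whose nested upper-left blocks satisfy \eqref{boundcondtion}, I build the hierarchy upward. Set $\Phi_1 = \Phi_0$, a solution of the trivial rank-one system. Assuming $\Phi_{k-1}(u_1, \dots, u_{k-1})$ constructed, I solve the rank-$k$ system \eqref{isoeq} with prescribed singular data at $u_k = \infty$: after the gauge $\big(\tfrac{u_k - u_{k-1}}{u_{k-1} - u_{k-2}}\big)^{\delta_{k-1}\Phi_{k-1}}$ the $u_k$-flow becomes a Fuchsian equation at $u_k = \infty$ whose indicial exponents differ by $\lambda^{(k-1)}_i - \lambda^{(k-1)}_j$. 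The strict bound \eqref{boundcondtion} is precisely the non-resonance condition (no exponent difference is a nonzero integer) guaranteeing a unique solution holomorphic near $u_k = \infty$ with leading term $\Phi_{k-1}$; undoing the gauge gives $\Phi_k$ satisfying \eqref{limit1}--\eqref{limit2}. Compatibility with the remaining flows $\partial_{u_1}, \dots, \partial_{u_{k-1}}$ follows from the Frobenius integrability of \eqref{isoeq}, itself a consequence of the system being the isomonodromy deformation of a Poincar\'e-rank-one connection; uniqueness at each stage propagates to uniqueness of the whole hierarchy up to $k = n$.

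The main obstacle is not the formal matching of exponents but the rigorous convergence, uniform in the bounded parameters $u_1, \dots, u_{k-1}$, of the full subleading series of the Fuchsian expansion through the quadratic coupling in \eqref{isoeq}: one must show that the errors produced by feeding the power-law tails of $c$ and $r$ back into the block equation stay integrable and do not accumulate, and it is exactly here that the strictness of \eqref{boundcondtion}, rather than $\le$, is indispensable and the resonant locus must be discarded. I expect this uniform estimate --- propagating the block decomposition through the nonlinearity while keeping the gauge-conjugated remainder summable --- to be the technical heart of the argument.
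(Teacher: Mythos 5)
First, a point of reference: this paper does not prove Theorem \ref{Thm:termwise} at all --- the theorem is imported from \cite{TangXu} (and, for skew-Hermitian solutions, from \cite{xu2019closure1}), so your proposal can only be judged as a standalone argument, not against an internal proof. With that said, your identification of the structure at $u_n=\infty$ --- the block splitting into $B$, $c$, $r$, $\phi_{nn}$, the leading equations $\partial_{u_n}c\sim\tfrac{1}{u_n}(\phi_{nn}I-B)c$ and $\partial_{u_n}r\sim\tfrac{1}{u_n}r(B-\phi_{nn}I)$, and the observation that the coupling $\phi_{in}\phi_{nj}$ then scales like $u_n^{\lambda_j^{(n-1)}-\lambda_i^{(n-1)}-2}$, so that integrability of the block equation is exactly the strict strip condition \eqref{boundcondtion} --- is the correct heuristic underlying the cited result. (Minor slip: the prefactor in the block equation is $u_j-u_i$, not $u_i-u_j$.)

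However, as a proof the proposal has genuine gaps, and they sit exactly where you yourself place ``the technical heart.'' (i) The argument is a bootstrap that is never closed: the formula $c\sim u_n^{\phi_{nn}I-B}c_0$ treats $B$ as constant, but $B$ is precisely the quantity whose convergence is being proven, while the convergence of $B$ is in turn deduced from the growth rates of $c,r$. One needs a simultaneous a priori estimate (an integral-equation/Gronwall scheme or a fixed point in weighted spaces) controlling both at once; this is the actual content of the proof in \cite{TangXu}, not a routine remainder estimate that can be deferred. (ii) The forward direction is circular in a second way: your dichotomy (``if the block converges, the bound is forced; the bound makes the block Cauchy'') never establishes that a given generic solution's block converges, since the bound refers to eigenvalues of a limit not yet known to exist. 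The claim that the exceptional set is measure zero because the eigenvalue differences ``appear as principal logarithms of monodromy invariants'' is unsupported: Section \ref{sec:Harnadduality} contains no such parametrization of solutions of \eqref{isoeq}, and in the cited works the ``almost every'' statement rests on the Riemann--Hilbert correspondence between boundary values $\Phi_0$ and Stokes data --- essentially Theorem \ref{thm: introcatformula} --- which you cannot invoke here without circularity. (iii) In the converse direction, the assertion that \eqref{boundcondtion} ``is precisely the non-resonance condition (no exponent difference is a nonzero integer)'' is false: non-resonance would permit $|{\rm Re}(\lambda_i^{(k-1)}-\lambda_j^{(k-1)})|>1$ as long as the differences avoid nonzero integers, whereas the strict strip is what makes the nonlinear feedback integrable; moreover existence and uniqueness for the nonlinear problem with a prescribed regularized limit does not follow from linear Fuchsian theory but again requires a contraction argument, together with a verification that the solution built from the single flow $\partial_{u_k}$ extends to a solution of the full compatible system. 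In short, the skeleton is right, but the three load-bearing steps --- closing the bootstrap, covering ``almost every'' solution, and the nonlinear existence/uniqueness --- are missing or misattributed to standard linear theory.
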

The regularized limit $\Phi_0$ is called the boundary value of $\Phi(u)$ (at the limit $\frac{u_{k+1}-u_{k}}{u_{k}-u_{k-1}}\rightarrow \infty$), and \eqref{boundcondtion} is called the boundary condition. Following Theorem \ref{Thm:termwise}, the boundary value $\Phi_0$ gives a parameterization of the generic solutions of \eqref{isoeq}. Most importantly, the monodromy data of the associated linear system has a closed formula in terms of $\Phi_0$ recalled as follows.

Let us consider the $n\times n$ linear system of meromorphic differential equation for a function 
$F(z,u)\in {\rm GL}_n(\mathbb{C})$ 
\begin{equation}
\label{introisoStokeseq1}
\frac{\partial}{\partial z} F(z,u)=
\Big(U + \frac{\Phi(u;\Phi_0)}{z}\Big)
\cdot F(z,u),
\end{equation}
where $\Phi(u;\Phi_0)$ is the 
solution of \eqref{isoeq} with the boundary value $\Phi_0$ in the sense of Theorem \ref{Thm:termwise}. Here we have 
$U ={\rm diag}(u_1,\ldots,u_n)$,
when we want $U$ 
to represent a diagonal matrix according to the context. 

For any fixed 
$u\in\mathbb{C}^n
\setminus\Delta$, 
the equation \eqref{introisoStokeseq1} 
has a unique formal solution $\hat{F}(z,u)$ 
around $z = \infty$. 
Then the standard summability theory 
states that there exist certain sectorial regions around $z=\infty$, 
such that on each of these sectors there is a unique (therefore canonical) holomorphic solution with the prescribed asymptotics $\hat{F}(z,u)$. 
These solutions are in general different 
(that reflects the Stokes phenomenon), 
and the transition between them 
can be measured by a pair of Stokes matrices $S_\pm(u,\Phi(u;\Phi_0))\in{\rm GL}(n)$ 
(see Section \ref{sec:monodromyduality} for more details).
Varying $u$, the Stokes matrices 
$S_\pm(u,\Phi(u))\in {\rm GL}(n,\mathbb{C})$ of the system are locally constant (independent of $u$), and this is why the equations \eqref{isoeq} are called isomonodromy equations.

\begin{theorem}
\label{thm: introcatformula}\cite{xu2019closure1}
For all purely imaginary parameters $u_1,...,u_n$ with ${\rm Im}(u_1)<{\rm Im}(u_2)<\cdots <{\rm Im}(u_n)$, the diagonals and sub-diagonals of the Stokes matrices $S_\pm(u,\Phi(u;\Phi_0))$ are given by
\begin{align*}
(S_+)_{k,k}  &= {\rm e}^{-{\rm i}\pi\cdot\lambda^{(k-1)}_k},\qquad (S_-)_{k,k}  = {\rm e}^{-{\rm i}\pi\cdot\lambda^{(k-1)}_k},\quad k=1,2,...,n\\
(S_+)_{k,k+1}&=2{\rm i}\pi\cdot e^{-{\rm i}\pi\cdot{\small{\lambda^{(k-1)}_{k}}}}\\
&\times \sum_{i=1}^k\frac{\prod_{l=1,l\ne i}^{k}\Gamma(1+\lambda^{(k)}_i-\lambda^{(k)}_l)}{\prod_{l=1}^{k+1}\Gamma(1+\lambda^{(k)}_i-\lambda^{(k+1)}_l)}\frac{\prod_{l=1,l\ne i}^{k}\Gamma(\lambda^{(k)}_i-\lambda^{(k)}_l)}{\prod_{l=1}^{k-1}\Gamma(1+\lambda^{(k)}_i-\lambda^{(k-1)}_l)}\cdot \Delta^{1,...,k-1,k}_{1,...,k-1,k+1}(\lambda^{(k)}_i-{\Phi_0}),\\
(S_-)_{k+1,k}& = -2\mathrm{i}\pi\cdot e^{-\mathrm{i}\pi\cdot{\small{\lambda^{(k)}_{k+1}}}}\\
&\times 
\sum_{i=1}^k \frac{\prod_{l=1,l\ne i}^{k}\Gamma(1+\lambda^{(k)}_l-\lambda^{(k)}_i)}{\prod_{l=1}^{k+1}\Gamma(1+\lambda^{(k+1)}_l-\lambda^{(k)}_i)}\frac{\prod_{l=1,l\ne i}^{k}\Gamma(\lambda^{(k)}_l-\lambda^{(k)}_i)}{\prod_{l=1}^{k-1}\Gamma(1+\lambda^{(k-1)}_l-\lambda^{(k)}_i)}\cdot  {\Delta^{1,...,k-1,k+1}_{1,...,k-1,k}(\Phi_0-{\lambda^{(k)}_i})}.
\end{align*}
where $k=1,...,n-1$, $\{\lambda^{(k)}_{i}\}_{ i=1,2,...,k}$ are the eigenvalues of left-top $k\times k$ submatrix of $\Phi_0$, $\lambda^{(k)}_{k+1} = (\Phi_0)_{k+1,k+1}$ and  $\Delta^{1,...,k-1,k}_{1,...,k-1,k+1}({\lambda^{(k)}_i-\Phi_0})$ is the $k$ by $k$ minor of the matrix $(\lambda^{(k)}_i{\rm Id}_n-\Phi_0)$ formed by the first $k$ rows and $1,...,k-1,k+1$ columns (here ${\rm Id}_n$ is the rank $n$ identity matrix). 
Furthermore, the other entries are also given by explicit expressions.
\end{theorem}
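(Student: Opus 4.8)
The plan is to exploit the defining feature of the Stokes matrices, namely that they are isomonodromy invariants: this is precisely why \eqref{isoeq} is called an isomonodromy equation, so $S_\pm(u,\Phi(u;\Phi_0))$ are constant in $u$ throughout the chamber $\{{\rm Im}(u_1)<\cdots<{\rm Im}(u_n)\}$. Consequently the matrices $S_\pm$ may be computed in \emph{any} convenient regime of $u$, and the natural choice is the nested scaling regime $\frac{u_{k+1}-u_k}{u_k-u_{k-1}}\to\infty$ of Theorem \ref{Thm:termwise}, in which $\Phi(u;\Phi_0)$ degenerates through the intermediate functions $\Phi_{n-1},\dots,\Phi_1$ down to the constant boundary value $\Phi_0$. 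In this limit the eigenvalues of the successive upper-left blocks of the residue freeze to the numbers $\lambda^{(k)}_i$, which is exactly the nested (Gelfand--Tsetlin type) data entering the formula, and the boundary condition \eqref{boundcondtion} guarantees that the scaling limits converge and that the Gamma factors below are regular.

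First I would set up an induction on the rank, relating the Stokes matrices of the rank-$n$ system \eqref{introisoStokeseq1} to those of the rank-$(n-1)$ system whose residue is the regularized limit $\Phi_{n-1}$. As $u_n\to\infty$ the $n$-th Stokes ray separates from the others and the canonical solutions factorize: the upper-left $(n-1)\times(n-1)$ block of $S_\pm$ converges to the rank-$(n-1)$ Stokes matrix, while the genuinely new data are confined to the last column of $S_+$ and the last row of $S_-$. Thus at each level it suffices to compute the new sub-diagonal entries $(S_+)_{k,k+1}$, $(S_-)_{k+1,k}$ and the diagonal entry. The diagonal entries $(S_\pm)_{k,k}={\rm e}^{-{\rm i}\pi\lambda^{(k-1)}_k}$ come directly from the formal monodromy of \eqref{introisoStokeseq1}, since $\lambda^{(k-1)}_k=(\Phi_0)_{k,k}$ records the diagonal of the regularized residue and the two Stokes matrices split it symmetrically.

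The heart of the proof is the model connection problem governing the new sub-diagonal entry. In the scaling limit the transition across the $(k+1)$-th Stokes ray, after restriction to the two active levels, is controlled by a confluent-hypergeometric type equation; computing its connection/Stokes coefficient produces precisely the ratio of Gamma products appearing in the statement, $\frac{\prod_{l\ne i}\Gamma(1+\lambda^{(k)}_i-\lambda^{(k)}_l)}{\prod_l\Gamma(1+\lambda^{(k)}_i-\lambda^{(k+1)}_l)}$ together with its lower-level counterpart involving $\lambda^{(k-1)}_l$, summed over the eigendirections $i=1,\dots,k$ of the $k\times k$ block. The remaining scalar factor, the minor $\Delta^{1,\dots,k-1,k}_{1,\dots,k-1,k+1}(\lambda^{(k)}_i-\Phi_0)$, is the projection of the $(k+1)$-th column of the boundary datum onto the $\lambda^{(k)}_i$-eigendirection of the $k\times k$ block, which by Cramer's rule and the adjugate is exactly such a $k\times k$ minor of $\lambda^{(k)}_i{\rm Id}_n-\Phi_0$; the prefactor $2{\rm i}\pi\cdot{\rm e}^{-{\rm i}\pi\lambda^{(k-1)}_k}$ combines the $2\pi{\rm i}$ from the reflection formula for $\Gamma$ with the formal monodromy. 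The entry $(S_-)_{k+1,k}$ follows from the transposed, dual connection problem, which accounts for the reversal of the eigenvalue differences and the appearance of $\Delta^{1,\dots,k-1,k+1}_{1,\dots,k-1,k}(\Phi_0-\lambda^{(k)}_i)$.

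The main obstacle will be to make the factorization of canonical solutions in the limit $u_n\to\infty$ rigorous and uniform, i.e. to prove that the Stokes factor attached to the $(k+1)$-th ray converges, with controlled error, to the connection coefficient of the model equation, and then to match the combinatorics exactly: one must track precisely which nested eigenvalues $\lambda^{(k-1)}_l$, $\lambda^{(k)}_l$, $\lambda^{(k+1)}_l$ enter each Gamma product and verify that the eigenvector projections assemble into the stated minors $\Delta$. This bookkeeping, rather than any single hard estimate, is where the real work lies, and it is also what extends the argument to the remaining, higher off-diagonal entries.
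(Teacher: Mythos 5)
You should first be aware that this paper never proves Theorem \ref{thm: introcatformula}: the result is imported wholesale from \cite{xu2019closure1} (hence the citation in the theorem header), and the present article only uses its $n=3$ specialization as one arrow in the diagram of Section \ref{commdiag}. So there is no internal proof to compare against; the only meaningful benchmark is the proof in the cited reference.

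Measured against that benchmark, your outline reconstructs the correct strategy: there, too, the computation exploits the $u$-independence of $S_\pm$, passes to the iterated limit $\frac{u_{k+1}-u_k}{u_k-u_{k-1}}\to\infty$ of Theorem \ref{Thm:termwise} (the ``caterpillar point''), proves a recursive rank-reduction for the canonical solutions, and evaluates the new sub-diagonal entries from the explicit connection coefficients of a degenerate, exactly solvable (confluent/generalized hypergeometric) model, with the minors $\Delta$ arising from spectral projections of the boundary datum onto the eigendirections of the upper-left $k\times k$ block. That said, your text is a program rather than a proof, and the step you pass over quickly in your first paragraph hides the central difficulty: $S_\pm$ is constant only on the open chamber ${\rm Im}(u_1)<\cdots<{\rm Im}(u_n)$, while the caterpillar point lies at its boundary at infinity, where the irregular data degenerates and the canonical solutions do \emph{not} converge naively. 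Making ``compute $S_\pm$ in the limit'' meaningful requires precisely the regularized-limit machinery (the diagonal conjugations appearing in \eqref{limit2}, and their counterparts for the fundamental solutions and the Stokes matrices), together with a proof that these regularized limits of the Stokes data coincide with the Stokes data of the limiting solvable system; this, plus the Gamma-factor and minor bookkeeping you explicitly defer, constitutes essentially the whole content of \cite{xu2019closure1}. As a blueprint your proposal is faithful to that proof; as a proof it leaves all of its substance to be supplied.
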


In Theorem \ref{thm: introcatformula} we used slightly different convention on $\Phi_0$ from the one used in \cite{xu2019closure1} due to the difference of the form of equation \eqref{introisoStokeseq1}.

\begin{remark}
Note that the expression of the Stokes matrices given in Theorem \ref{thm: introcatformula} is an analytic function of $\Phi_0$ for
all $\Phi_0\in\frak{gl}_n$ satisfying the condition \eqref{boundcondtion}. Its poles along the boundary, when some of the inequalities in \eqref{boundcondtion} become equalities, is closely related to the non-generic solutions of the system \eqref{isoeq}.
\end{remark}

\subsection{A new interpretation of Jimbo's formula for Painlev\'e VI}

We refer the reader to the book of Fokas, Its, Kapaev and Novokshenov \cite{FIKN2006} for a thorough introduction to the history and developments of the study of Painlev\'{e} equations. In particular, the sixth Painlev\'{e} equation (simply denoted by PVI or Painlev\'e VI) is the nonlinear differential equation
\begin{eqnarray}\nonumber
\frac{d^2y}{dx^2}&=&\frac{1}{2}\Big[\frac{1}{y}+\frac{1}{y-1}+\frac{1}{y-x}\Big](\frac{dy}{dx})^2-\Big[\frac{1}{x}+\frac{1}{x-1}+\frac{1}{y-x}\Big]\frac{dy}{dx}\\ \label{eq:PVI}
&+&\frac{y(y-1)(y-x)}{x^2(x-1)^2}\Big[\frac{(\theta_\infty-1)^2}{2}-\frac{\theta_1^2}{2}\frac{x}{y^2}+\frac{\theta_3^2}{2}\frac{x-1}{(y-1)^2}+\frac{1-\theta_2^2}{2}\frac{x(x-1)}{(y-x)^2}\Big],
\end{eqnarray}
with complex parameters $\theta_1,\theta_2,\theta_3,\theta_\infty \in\mathbb{C}$.
A solution $y(x)$ of PVI has $0,1,\infty$ as critical points, and can be analytically
continued to a meromorphic function on the universal covering of $\mathbb{P}^1\setminus\{0, 1,\infty\}$. 

\subsubsection{A new interpretation of Jimbo's boundary condition}
The asymptotics for Painlev\'e VI transcendents $y(x)$ were evaluated via the isomonodromy approach for the generic case by Jimbo \cite{Jimbo1982}.
A transcendent in the generic class obtained by Jimbo has the following critical behaviour at $x = 0$
\begin{eqnarray}\label{VIasy}
  y(x)
\ \sim \
  \left\{\begin{array}{ll}
    Jx^{1-\sigma}(1+O(x^{\epsilon})), \qquad & {\rm Re}(\sigma)>0,\\
      Jx^{1-\sigma}+J_1x^{1+\sigma}+J_2x+O(x^{2-\sigma}), \qquad &  {\rm Re}(\sigma)=0, \ \sigma\neq 0,
  \end{array}\right.
\end{eqnarray} 
 where $\epsilon$ is a small positive number, 
 \begin{align*}
     J_1 = \frac{(\sigma^2-(\theta_1-\theta_2)^2)(\sigma^2-(\theta_1+\theta_2)^2)}{16\sigma^4J}, \qquad  J_2 = \frac{\theta_1^2-\theta_2^2+\sigma^2}{2\sigma^2},
 \end{align*} and $\sigma$ satisfies the boundary condition
\begin{equation}\label{VIbound}
 0\leq{\rm Re}(\sigma) < 1.    
\end{equation}
A detailed review of the asymptotic expansion can be found in \cite{Guzzetti2015review}. 
Thus, in the following, let us denote by $y(x;\sigma,J,\theta_1,\theta_2,\theta_3,\theta_\infty)$ a generic solution of the equation \eqref{eq:PVI} with the parameters $\theta_1,\theta_2,\theta_3,\theta_\infty$, and the asymptotics \eqref{VIasy}.

It was shown by Harnad \cite{Harnad1994} (see also \cite{Mazzocco2002}, \cite[Section 3]{Boalch2005} and \cite{Degano-Guzzetti2023} for a detailed way to do the Harnad duality), that Painlev\'{e} VI is equivalent to the equation \eqref{isoeq} with $n=3$ and suitable $\Phi(u_1,u_2,u_3)$. As a consequence, following Theorem \ref{thm: Expression of Omega in solution of PVI} from \cite{Degano-Guzzetti2023}, given a generic solution $y(x;\sigma,J,\theta_1,\theta_2,\theta_3,\theta_\infty)$, there corresponds to a family of equivalent solutions $\Phi(u_1,u_2,u_3)$ of \eqref{isoeq} with $x={(u_2-u_1)}/{(u_3-u_1)},$ and
\begin{align}\label{eq:restr of diag of phi}
    &\delta\Phi = -\text{diag}(\theta_1,\theta_2,\theta_3);\\ \label{eq:restr of eigen of phi}
   &\Phi\ \text{has distinct eigenvalues}\ = 0, \ \frac{\theta_\infty-\theta_1-\theta_2-\theta_3}{2},\ 
    \frac{-\theta_\infty-\theta_1-\theta_2-\theta_3}{2}.
\end{align}
Here $\delta\Phi$ is the diagonal part of $\Phi$, and some discrete choices are made as in \eqref{eq:restr of diag of phi}, \eqref{eq:restr of eigen of phi}.

Since the asymptotics \eqref{VIasy} of the $y(x)$ at $x=0$ amounts to the behaviour of the corresponding solution $\Phi(u)$ as 
$x={(u_2-u_1)}/{(u_3-u_1)}\rightarrow \infty$, one may expect a relation between the boundary value $\Phi_0$ of $\Phi(u)$ and the asymptotic parameters $\sigma,J$ of $y(x)$. It is indeed the case. In Section \ref{p1.4} we prove

\begin{theorem}\label{mainthm}
Let $y(x;\sigma, J, \theta_1,\theta_2,\theta_3,\theta_\infty)$ be a generic solution of the Painlev\'e VI equation \eqref{eq:PVI}, $\sigma, J \neq 0$, and $\Phi(u;\Phi_0)$ be a corresponding generic solution of \eqref{isoeq} with $n=3$. Then there exists a unique $K^0 = {\rm diag}(k_1^0,k_2^0,1)$ for $k_1^0,k_2^0\in \mathbb{C}\setminus \{0\}$ such that the entries of $3\times 3$ matrix $\Phi'_0= (K^0)^{-1} \Phi_0 K^0$ are
    \begin{align}\label{111}
        \begin{split}
            (\Phi'_0)_{ii} &= -\theta_i, \quad  i=1,2,3, \\
            (\Phi'_0)_{12} &= \frac{\theta_1-\theta_2-\sigma}{2},\\
            (\Phi'_0)_{21} &= \frac{-\theta_1+\theta_2-\sigma}{2},\\
            (\Phi'_0)_{13} &=\frac{1}{2}\cdot(-\theta_3-\theta_\infty+\sigma)-\frac{1}{8\sigma^2 J}\cdot(\theta_1-\theta_2-\sigma)(\theta_1+\theta_2-\sigma)(\theta_3+\theta_{\infty}+\sigma),\\
            (\Phi'_0)_{31}&=\frac{J}{2}\cdot(\theta_3-\theta_\infty+\sigma)-\frac{1}{8\sigma^2}\cdot(-\theta_1+\theta_2-\sigma)(\theta_1+\theta_2+\sigma)(\theta_3-\theta_\infty-\sigma),\\
             (\Phi'_0)_{23} &=\frac{1}{2}\cdot(-\theta_3-\theta_\infty+\sigma)-\frac{1}{8\sigma^2 J}\cdot(\theta_1-\theta_2+\sigma)(\theta_1+\theta_2-\sigma)(\theta_3+\theta_\infty+\sigma),\\
             (\Phi'_0)_{32}&=\frac{J}{2}\cdot(-\theta_3+\theta_\infty-\sigma)-\frac{1}{8\sigma^2}\cdot(\theta_1-\theta_2-\sigma)(\theta_1+\theta_2+\sigma)(\theta_3-\theta_\infty-\sigma).
        \end{split}
    \end{align}
    Furthermore, the condition \eqref{boundcondtion} for $\Phi_0$ becomes Jimbo's boundary condition \eqref{VIbound} for $\sigma$. 
\end{theorem}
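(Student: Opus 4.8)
The plan is to identify $\Phi_0$, up to diagonal conjugation, through its diagonal-conjugation invariants, and to match these invariants with the asymptotic data $(\sigma,J)$ by evaluating the regularized limits of Theorem \ref{Thm:termwise} on the Harnad-dual family attached to $y(x)$. I first record the invariants of $\Phi_0$ that are forced with no computation. The diagonal of any solution of \eqref{isoeq} is $u$-independent: the first line of \eqref{isoeq} gives $\partial_{u_k}\phi_{kk}=0$, while setting both free indices equal in the second line gives $\partial_{u_k}\phi_{ii}=0$ for $i\neq k$. Since the diagonal is also preserved by the limits \eqref{limit1}--\eqref{limit2}, we get $\delta\Phi_0=\delta\Phi=-\text{diag}(\theta_1,\theta_2,\theta_3)$. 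Likewise the spectrum of $\Phi$ is an isomonodromy invariant and passes to the limit through the similarity transformation \eqref{limit2}, so $\Phi_0$ carries the three eigenvalues \eqref{eq:restr of eigen of phi}. These determine the diagonal of $\Phi'_0$ and the coefficients $e_2,e_3$ of its characteristic polynomial, and one checks that the explicit matrix \eqref{111} reproduces exactly these values for every admissible $J$.

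Next I would compute the two remaining invariants, which encode $\sigma$ and $J$, by carrying out the nested limits of Theorem \ref{Thm:termwise} for $n=3$: first $u_3\to\infty$ (producing $\delta_2\Phi_2$ and then $\Phi_2$ after conjugation by $\big((u_3-u_2)/(u_2-u_1)\big)^{\delta_2\Phi_2}$), and then $u_2\to\infty$ (producing $\Phi_0$ after the diagonal conjugation by $(u_2-u_1)^{\delta_1\Phi_1}$). Feeding the critical expansion \eqref{VIasy} into the Degano--Guzzetti expression for $\Phi(u)$ in terms of $y(x)$ and its derivatives \cite{Degano-Guzzetti2023}, the first limit fixes the eigenvalues of the upper-left $2\times2$ block of $\Phi_0$: they come out as $\lambda^{(2)}_{1,2}=\tfrac{1}{2}\big(-(\theta_1+\theta_2)\pm\sigma\big)$, so that the block-spectrum invariant is exactly $\sigma$. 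The same nested limit, now tracking the off-diagonal entries, produces the last invariant, the $3$-cycle product $(\Phi_0)_{13}(\Phi_0)_{32}(\Phi_0)_{21}$ (equivalently its companion $(\Phi_0)_{12}(\Phi_0)_{23}(\Phi_0)_{31}$), as an explicit function of $\sigma$ and $J$.

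With all four cycle invariants in hand I would close the argument by invariant matching. One verifies directly from \eqref{111} that $\Phi'_0$ has the same diagonal, the same three $2$-cycle products $(\Phi'_0)_{ij}(\Phi'_0)_{ji}$ (in particular $(\Phi'_0)_{12}(\Phi'_0)_{21}=\tfrac14(\sigma^2-(\theta_1-\theta_2)^2)$, matching the block-spectrum), and the same $3$-cycle product as $\Phi_0$; this last equality is what pins the parameter $J$ in \eqref{111} to Jimbo's $J$. Two $3\times3$ matrices with equal diagonal and equal $2$- and $3$-cycle invariants, all of whose off-diagonal entries are nonzero, are conjugate by a diagonal matrix, and such a matrix is unique up to an overall scalar because a generic matrix has only scalar stabilizer under diagonal conjugation. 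Normalizing the $(3,3)$ entry to $k_3^0=1$ then yields a unique $K^0=\text{diag}(k_1^0,k_2^0,1)$ with $(K^0)^{-1}\Phi_0K^0=\Phi'_0$, giving both existence and uniqueness.

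The boundary condition follows at once: the instance $k=3$ of \eqref{boundcondtion} reads $|{\rm Re}(\lambda^{(2)}_1-\lambda^{(2)}_2)|<1$, which by the block-eigenvalue computation is $|{\rm Re}(\sigma)|<1$; together with Jimbo's sign normalization ${\rm Re}(\sigma)\geq0$ this is precisely \eqref{VIbound}. The hard part will be the second step: controlling the two successive limits on a matrix whose entries carry the full expansion \eqref{VIasy}---including the subleading $J_1,J_2$ terms, which are exactly the ones that survive in the ${\rm Re}(\sigma)=0$ regime---and extracting a finite $3$-cycle invariant after the potentially singular conjugations by powers of the cross-ratios. Verifying that this limit is finite and independent of the order in which $u_2,u_3\to\infty$ is where the real work lies; the invariant-theoretic matching of the last step is then routine linear algebra.
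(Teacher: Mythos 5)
Your skeleton --- pin down the diagonal and spectrum for free, extract $\sigma$ from the upper-left $2\times 2$ block, extract $J$ from a cycle invariant, then match invariants by a diagonal conjugation --- is coherent, and your reading of the boundary condition (the $k=3$ instance of \eqref{boundcondtion} applied to the block eigenvalues $\tfrac{1}{2}(-(\theta_1+\theta_2)\pm\sigma)$ gives $|{\rm Re}(\sigma)|<1$, hence \eqref{VIbound}) agrees with the paper. But the proposal defers exactly the step that constitutes the content of the theorem: the asymptotic evaluation of the regularized limits on the Degano--Guzzetti expression of Theorem \ref{thm: Expression of Omega in solution of PVI}. The formula \eqref{111} is not accessible by soft invariant arguments; its whole point is the explicit $J$-dependence of $(\Phi'_0)_{13},(\Phi'_0)_{31},(\Phi'_0)_{23},(\Phi'_0)_{32}$, and that dependence only emerges from the computation you yourself label ``where the real work lies'' and do not perform. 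In the paper this computation \emph{is} the proof: Corollary \ref{lemma:limit of Omega} first converts the nested limits of Theorem \ref{Thm:termwise} into single-variable limits $x\to 0$ of $A(x)$ in \eqref{eq:delta Phi0} and $B(x)$ in \eqref{eq:Phi0} (so your worry about the order of the limits $u_2,u_3\to\infty$ is moot --- everything is a function of the single cross-ratio $x$); Lemma \ref{lemma k1,k2} then extracts the expansions of $k_1(x),k_2(x)$, whose subleading $x^{\sigma}$-corrections with coefficients $-\theta_1/(2\sigma J)$ and $(\theta_2-\sigma)/(2\sigma J)$ are precisely where $J$ enters; Lemma \ref{lemma Omega} does the same for $\Omega_{13},\Omega_{23},\Omega_{31},\Omega_{32}$; and the final limit of $B(x)$ requires the divergent $x^{-\sigma}$ contributions to cancel, which happens only because of the exact values of those coefficients. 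Asserting that the limits ``come out as'' the stated values is a statement of the theorem, not a proof of it.

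There is also a defect internal to your matching step. For a $3\times 3$ matrix with nonzero off-diagonal entries, the diagonal-conjugation class is determined by the diagonal, the three $2$-cycles $\phi_{12}\phi_{21},\ \phi_{13}\phi_{31},\ \phi_{23}\phi_{32}$, and one $3$-cycle. The invariants you propose to compute --- diagonal, full spectrum, block spectrum (i.e.\ $\phi_{12}\phi_{21}$), and one $3$-cycle $t$ --- recover the remaining two $2$-cycles $b=\phi_{13}\phi_{31}$, $c=\phi_{23}\phi_{32}$ only through $e_2$ and $e_3$, i.e.\ from $b+c$ and the relation $\theta_1 c+\theta_2 b+t+abc/t=\text{known}$ (with $a=\phi_{12}\phi_{21}$). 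This is a quadratic system with generically two solutions, and it degenerates outright when $\theta_1=\theta_2$, a case not excluded by the hypotheses. So even as a reduction strategy, your invariant list does not pin down the class: you must additionally compute $b$ and $c$ (or both $3$-cycles plus a disambiguation) from the limit --- again the deferred computation. Finally, diagonal conjugacy from equal invariants, and hence the uniqueness of $K^0$, requires all off-diagonal entries of both matrices to be nonzero; for \eqref{111} this rests on Jimbo's non-degeneracy assumptions (e.g.\ condition $(d)$ of Theorem \ref{thm Jimbo's formua for leading term of PVI} gives $\theta_1\pm\theta_2\pm\sigma\neq 0$, so $(\Phi'_0)_{12},(\Phi'_0)_{21}\neq 0$), which should be invoked explicitly. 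By contrast, the paper's direct computation produces each entry of $\Phi_0$ carrying explicit factors of $k_1^0,k_2^0$ (e.g.\ $(\Phi_0)_{12}=\tfrac{k_1^0}{k_2^0}\cdot\tfrac{\theta_1-\theta_2-\sigma}{2}$), so existence and uniqueness of $K^0$ fall out at once with no invariant theory needed.
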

\begin{remark}\label{rem:extend sigma}
Theorem \ref{mainthm} can be extended to the case $\sigma=0$. Following \cite{Guzzetti2006matching}, there exist solutions $y(x)$ of PVI with the asymptotic behavior
\begin{align*}
        y(x) \sim x\left[\frac{\theta_2^2-\theta_1^2}{4}\left({\rm log}x+\frac{2\Tilde{J}}{\theta_1^2-\theta_2^2}\right)^2+\frac{\theta_1^2}{\theta_1^2-\theta_2^2}\right]+ O(x^2{\rm log}^3x), \ \theta_1\ne\pm\theta_2.
    \end{align*}
Let $\Phi(u;\Phi_0)$ be the corresponding solutions of \eqref{isoeq} with $n=3$, satisfying conditions \eqref{eq:restr of diag of phi}, \eqref{eq:restr of eigen of phi}, then the entries of $3\times 3$ matrix $\Phi'_0= (K^0)^{-1} \Phi_0 K^0$ are
\begin{align*}
        \begin{split}
            & (\Phi'_0)_{ii} = -\theta_i, \quad  i=1,2,3, \qquad
            (\Phi'_0)_{12} = \frac{\theta_1-\theta_2}{2}, \qquad
            (\Phi'_0)_{21} = \frac{-\theta_1+\theta_2}{2},\\
            & (\Phi'_0)_{13} = 1 + \frac{(\theta_3+\theta_\infty)(\Tilde{J}-\theta_1)}{\theta_1^2-\theta_2^2}, \qquad 
             (\Phi'_0)_{31} =\frac{\theta_3-\theta_\infty}{4}(\Tilde{J}+\theta_1)-\frac{\theta_1^2-\theta_2^2}{4}, \\
             & (\Phi'_0)_{23} =1+\frac{(\theta_3+\theta_\infty)(\Tilde{J}+\theta_2)}{\theta_1^2-\theta_2^2},\qquad
             (\Phi'_0)_{32} =\frac{\theta_3-\theta_\infty}{4}(\theta_2-\Tilde{J})+\frac{\theta_1^2-\theta_2^2}{4}. 
        \end{split}
\end{align*}
\end{remark}

Note that formulae in \eqref{111}  are invertible. In particular, the parameters of $y(x)$ can be expressed by the boundary value $\Phi_0$ of $\Phi(u)$ as follows.
\begin{cor}\label{cor: invert of Phi to y}
Let $y(x;\sigma, J, \theta_1,\theta_2,\theta_3,\theta_\infty)$ be a generic solution of the Painlev\'e VI equation, $\sigma, J\ne 0$, and $\Phi(u;\Phi_0)$ the corresponding solution of \eqref{isoeq} with $n=3$. Set $\varphi_{ij} = (\Phi_0)_{ij}$, then
\begin{align}
    \theta_i &= -\varphi_{ii},\ i=1,2,3, \nonumber\\
    \sigma^2 &= {(\varphi_{11}-\varphi_{22})^2+4\varphi_{12}\varphi_{21}}, \nonumber \\     
    \theta_\infty^2 &= 4(\varphi_{12}\varphi_{21}+\varphi_{23}\varphi_{32}+\varphi_{31}\varphi_{13})+\theta_1^2+\theta_2^2+\theta_3^2-2(\theta_1\theta_2+\theta_2\theta_3+\theta_1\theta_3 ), \label{eq:def of theta infty}
    \\   
    (\theta_\infty+\theta_3-\sigma)(\theta_\infty-\theta_3-\sigma)J &= (\varphi_{13}\varphi_{31}-\varphi_{32}\varphi_{23})+ \frac{(\varphi_{22}-\varphi_{11})(2\varphi_{33}-\varphi_{11}-\varphi_{22})}{4} \nonumber \\ 
    &\quad + \frac{2}{\sigma}(\varphi_{13}\varphi_{32}\varphi_{21}-\varphi_{23}\varphi_{31}\varphi_{12})+\frac{(\theta_1^2-\theta_2^2)(\theta_3^2-\theta_\infty^2)}{4\sigma^2}. \label{eq:invertible Phi0 to J}
\end{align}
\end{cor}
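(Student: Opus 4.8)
The plan is to exploit that every quantity appearing on the right-hand sides of the asserted formulae is invariant under the diagonal conjugation $\Phi_0 \mapsto \Phi'_0 = (K^0)^{-1}\Phi_0 K^0$ with $K^0 = \mathrm{diag}(k_1^0,k_2^0,1)$ of Theorem \ref{mainthm}, so that each may be computed directly from the explicit entries \eqref{111} of $\Phi'_0$ without ever determining $K^0$. Indeed, for a diagonal $K^0$ the diagonal entries $\varphi_{ii}$, the opposite-pair products $\varphi_{ij}\varphi_{ji}$, and the two three-cycles $\varphi_{13}\varphi_{32}\varphi_{21}$ and $\varphi_{23}\varphi_{31}\varphi_{12}$ are all unchanged under $\Phi_0 \mapsto \Phi'_0$, since the scaling factors $k_i^0$ cancel; together with the coefficients of the characteristic polynomial (which are conjugation invariants) these are precisely the building blocks of the claimed identities.

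First I would read off $\theta_i = -\varphi_{ii}$ immediately from the relations $(\Phi'_0)_{ii} = -\theta_i$ in \eqref{111} and the invariance of the diagonal. Next, from the invariance of $\varphi_{12}\varphi_{21}$ together with the first two off-diagonal formulae in \eqref{111}, a short computation gives $\varphi_{12}\varphi_{21} = (\Phi'_0)_{12}(\Phi'_0)_{21} = \tfrac14\bigl(\sigma^2 - (\theta_1-\theta_2)^2\bigr)$, which, using $\theta_1-\theta_2 = \varphi_{22}-\varphi_{11}$, rearranges to the stated formula for $\sigma^2$. For $\theta_\infty^2$ I would invoke the eigenvalue normalization \eqref{eq:restr of eigen of phi}: since the eigenvalues $0,\ \tfrac{\pm\theta_\infty-\theta_1-\theta_2-\theta_3}{2}$ are conjugation invariant, the second elementary symmetric function of the eigenvalues equals the sum of the $2\times 2$ principal minors of $\Phi_0$. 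Equating $\tfrac14\bigl((\theta_1+\theta_2+\theta_3)^2-\theta_\infty^2\bigr)$ with $\sum_{i<j}(\varphi_{ii}\varphi_{jj}-\varphi_{ij}\varphi_{ji})$ and substituting $\varphi_{ii}=-\theta_i$ yields \eqref{eq:def of theta infty}.

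The main obstacle is the formula \eqref{eq:invertible Phi0 to J} for $J$. Here I would substitute the explicit entries \eqref{111} into the invariants $\varphi_{13}\varphi_{31}$, $\varphi_{23}\varphi_{32}$, $\varphi_{13}\varphi_{32}\varphi_{21}$ and $\varphi_{23}\varphi_{31}\varphi_{12}$ on the right-hand side. Each off-diagonal entry in \eqref{111} splits into a $J$-free part and a part proportional to $J^{\pm 1}$, so the pair and triple products are Laurent polynomials in $J$; the purpose of the particular combination on the right of \eqref{eq:invertible Phi0 to J}, together with the $\tfrac{2}{\sigma}$ weighting of the three-cycle difference and the correction terms $\tfrac{(\varphi_{22}-\varphi_{11})(2\varphi_{33}-\varphi_{11}-\varphi_{22})}{4}$ and $\tfrac{(\theta_1^2-\theta_2^2)(\theta_3^2-\theta_\infty^2)}{4\sigma^2}$, is exactly to cancel all the $J^{0}$, $J^{-1}$ and $J^{-2}$ contributions, leaving precisely $(\theta_\infty+\theta_3-\sigma)(\theta_\infty-\theta_3-\sigma)\,J$. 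Carrying this out requires repeated use of the factorizations of the cubic numerators in \eqref{111} and of the relation for $\theta_\infty^2-\theta_3^2$ obtained above; I expect the bookkeeping of these cancellations, rather than any conceptual difficulty, to be the hard part.
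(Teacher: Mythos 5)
Your proposal is correct and follows essentially the same route as the paper, which offers no separate proof of this corollary but presents it as the direct inversion of the formulae \eqref{111}; your observation that the diagonal entries, the products $\varphi_{ij}\varphi_{ji}$, the two three-cycles and the eigenvalue data are unchanged under conjugation by $K^0={\rm diag}(k_1^0,k_2^0,1)$ is exactly what licenses computing every right-hand side from $\Phi'_0$ instead of $\Phi_0$. One bookkeeping correction to your sketch of \eqref{eq:invertible Phi0 to J}: no $J^{-2}$ terms ever arise (each pair product and each three-cycle is a Laurent polynomial in $J$ with exponents $1,0,-1$ only), the $J^{-1}$ parts do not vanish inside $\varphi_{13}\varphi_{31}-\varphi_{32}\varphi_{23}$ alone but cancel against the $\tfrac{2}{\sigma}$-weighted three-cycle difference, and that same three-cycle term supplies half of the final coefficient of $J$ (the pair-product difference supplying the other half), after which the $J^0$ parts are removed by the two correction terms exactly as you predict, so the verification closes.
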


From Theorem \ref{mainthm} and Corollary \ref{cor: invert of Phi to y}, we see that the critical behavior \eqref{VIasy} of a generic solution of PVI at $x = 0$ can be completely characterized by $\Phi_0$. In particular Jimbo's parameter $\sigma$ of $y(x;\sigma, J, \theta_1,\theta_2,\theta_3,\theta_\infty)$ is given by
\[\sigma=\lambda^{(2)}_1(\Phi_0)-\lambda^{(2)}_2(\Phi_0),\]
where $\lambda^{(2)}_1$ and $\lambda^{(2)}_2$ are the eigenvalues of the upper left $2\times 2$ submatrix of the $3\times 3$ matrix $\Phi_0$. Here we assume ${\rm Re}(\lambda^{(2)}_1)\ge {\rm Re}(\lambda^{(2)}_2)$. Then the boundary conditions \eqref{boundcondtion} for $n=3$ case becomes Jimbo's condition \eqref{VIbound}. In this way, we give a new interpretation of Jimbo's condition \eqref{VIbound} as the shrinking phenomenon in the long time $u_3 \to \infty$ behaviour of the generic solution $\Phi(u_1,u_2,u_3)$ of \eqref{isoeq} for $n=3$ case.

\subsubsection{A new interpretation of Jimbo's monodromy formula}
The Painlev\'e VI equation \eqref{eq:PVI} can be realized as the isomonodromy deformation equation of a $2\times 2$ Fuchsian linear system. Then Jimbo \cite{Jimbo1982} gave the explicit expression of the monodromy of the linear system associated to a generic $y(x;\sigma, J, \theta_1,\theta_2,\theta_3,\theta_\infty)$. See Section \ref{sec:regularsystem} for more details on the Fuchsian system, its monodromy data, and see Theorem \ref{thm Jimbo's formua for leading term of PVI} for the precise statement of Jimbo's formula.

Under the Harnad duality,
on the one hand, following \cite{Balser-Jurkat-Lutz1981, Boalch2005, Degano-Guzzetti2023}, the monodromy parameters $(p_{ij}, \theta_k)$ of the $2\times 2$ dual Fuchsian system associated to the transcendent $y(x;\sigma, J, \theta_1,\theta_2,\theta_3,\theta_\infty)$ can be explicitly expressed by the $3\times 3$ Stokes matrices of $S_\pm(u,\Phi(u;\Phi_0))$;  on the other hand, following Theorem \ref{mainthm}, the boundary value $\Phi_0$ can be explicitly expressed by the parameters $(\sigma, J, \theta_1,\theta_2,\theta_3,\theta_\infty)$. Therefore, we get the following diagram under the duality
\[
\begin{CD}
\text{\Big\{Stokes matrices $S_\pm(u,\Phi(u;\Phi_0))$ \Big\}} @> \text{\cite{Degano-Guzzetti2023}, see Theorem \ref{thm: relation of pij and S+-}} >> \text{\Big\{Monodromy parameters $\{p_{ij}, \theta_k\}$ \Big\}} \\
@A \text{Theorem \ref{thm: introcatformula} for $n=3$} AA    @V\text{Theorem \ref{thm Jimbo's formua for leading term of PVI}, Jimbo's formula \cite{Jimbo1982} }VV \\
\text{$\Big\{ \text{Boundary values } \Phi_0 \Big\}$} @< \text{Theorem \ref{mainthm} }<< \text{\Big\{parameters $\sigma, J, \theta_1, \theta_2, \theta_3, \theta_\infty$ \Big\} }
\end{CD}\]
In this way, we get a new interpretation of Jimbo's formula as the monodromy formula in Theorem \ref{thm: introcatformula} of the $3\times 3$ linear system \eqref{introisoStokeseq1} with an irregular singularity under the duality. Without knowing it a priori, by composing the other three arrows, one can find ( the inverse of) the formula of Jimbo. \textcolor{red}Instead of doing this, for simplicity in Section \ref{commdiag} we check by hand the composition of the four arrows is an identity. The computation reduces to some combinatorial identities of trigonometric functions. It is rather encouraging to see those complicated formulas, involved in the four theorems/arrows in the diagram, in the end match up!

Now let us explain the main ideas that we want to convey in this article. As stressed in \cite{FIKN2006,Its-Novokshenov1986}, the solutions of Painlev\'{e} equations are seen as nonlinear special functions, because they play the same role in nonlinear mathematical physics as that of classical special functions, like Airy functions, Bessel functions, etc., in linear physics. And it is the answers of the following questions that make Painlev\'{e} transcendents as efficient in applications as linear special functions:
\begin{itemize}
\item[(a)] the parametrization of Painlev\'{e} transcendents $y(x)$ by their asymptotic behaviour at critical points;

\item[(b)] the explicit expression of the monodromy of the associated linear problem via the parametrization at critical points;

\item[(c)] the construction of the connection 
formula from one critical point to another. 
\end{itemize}

Given the many known applications and the Painlev\'e property, we believe that the transcendents $\Phi(u)$, as the higher rank analog of Painlev\'e VI, have richer structures and applications remained to be explored. And just like Painlev\'e VI case, we expect that the answers to the above problems $(a)-(c)$ for $\Phi(u)$ will play crucial roles in other problems from mathematical physics. 

The result in this paper justifies that the boundary value $\Phi_0$ in Theorem \ref{Thm:termwise} is the right parametrization of generic transcendents $\Phi(u)$. It thus justifies that Theorem \ref{Thm:termwise} and \ref{thm: introcatformula} give the answers to the problems $(a)$ and $(b)$ for the isomonodromy equation \eqref{isoeq}. Guided by the result in this paper, we would like to first translate various results of Painlev\'e VI transcendents $y(x)$ to the $3\times 3$ matrix $\Phi(u)$ via the Harnad duality, and then generalize them to arbitrary rank $\Phi(u)$ via the help of Theorem \ref{Thm:termwise} and \ref{thm: introcatformula}. For example, the connection 
formula in Problem $(c)$ between two special critical points is derived in \cite{Xu2023} as a consequence of the two theorems. The rich literature on Painlev\'e VI provides us many interesting questions that can be asked for $\Phi(u)$, including the connection 
formula, the classification of the algebraic solutions, the study of the initial value space, its quantum monodromy manifolds \cite{CMR} and so on.

\vspace{3mm}
The organization of the paper is as follows. Section \ref{JMMMH} gives a brief introduction to the isomonodromy deformation equation (the JMMS equation) of some linear systems and their Harnad duality. Section \ref{sec:regularsystem} focuses on the special case, i.e., a $2\times 2$ Fuchsian linear system with four regular singularities, and the dual $3\times 3$ linear system with one regular and one irregular singularity, whose isomonodromy equation give rise to the PVI and the equation \eqref{isoeq} for $n=3$ respectively. It summarizes the results about Jimbo's formula, as well as the correspondence between their monodromy and solutions under the duality. Section \ref{lastsec} first proves Theorem \ref{mainthm} and then gives a new interpretation of Jimbo's formula.

\subsection*{Acknowledgements}
\noindent
The authors would like to thank Qian Tang for useful discussion. The authors are supported by the National Key Research and Development Program of China (No. 2021YFA1002000) and by the National Natural Science Foundation of China (No. 12171006). Y. X. is also supported by the National Natural Science Foundation of China under the Grant No. 12301304.

\section{JMMS equations and Harnad duality}\label{JMMMH}
\subsection{Linear systems with one irregular singularity}



Let us consider the linear ODE system
\begin{equation}\label{eq:irregularsystem}
    \frac{dF}{dz} = A(z)F, \qquad A(z) = U + \sum \limits_{i = 1}^N\frac{A_i}{z - t_i},
\end{equation}
where $A_i$'s are $n \times n$ complex matrices, and $U = \text{diag}(u_1, \dots, u_n) = \sum_{\alpha = 1}^n u_{\alpha}E_{\alpha}$ is a diagonal matrix where $E_{\alpha}$ is the $n \times n$ matrix with $(\alpha, \alpha)$ entry be $1$ and $0$ at all the other places. 

System \eqref{eq:irregularsystem} has regular (Fuchsian) singularities at $z = t_i (i = 1, \dots, N)$ and an irregular singularity of Poincar\'e rank $1$ at $z = \infty$ if $U \ne 0$ or a regular singularity at $z = \infty$ if $U = 0$ and $A_{\infty} = -\sum \limits_{i = 1}^N A_i \ne 0$. Let $\theta_{i\alpha} (\alpha = 1, \dots, n)$ be the eigenvalues of $A_i$. Let us assume the following generic conditions
\begin{equation}\label{eq:genericconditions}
\theta_{i\alpha} - \theta_{i\beta} \not\in \mathbb{Z}, \qquad \text{and} \qquad u_{\alpha} - u_{\beta} \ne 0, \quad \text{for $\alpha \ne \beta$ if $U \ne 0$,}
\end{equation}
to avoid the resonant cases. Then the coefficients $A_i$ are all diagonalizable
\[A_i = G_i \Theta_i G_i^{-1}, \qquad \Theta_i = \text{diag}(\theta_{i1}, \dots, \theta_{i, n}).\]
Note that $G_i$'s are not unique, and each of them can be multiplied on the right by a diagonal matrix.

By the Riemann-Hilbert-Birkhoff correspondence (up to submanifolds where the inverse monodromy problem for \eqref{eq:irregularsystem} is not solvable), the
global analytic solutions of \eqref{eq:irregularsystem} are characterized by the associated monodromy data which includes: (1) the monodromy matrix (or ``exponent of formal monodromy''\cite{JMU1981I}) around each regular singular point; (2) the relevant Stokes matrices at each irregular singular points; (3) appropriate connection matrices between canonical solutions at different singular points.

\subsection{Isomonodromic deformation of the linear system \eqref{eq:irregularsystem}}\label{sec:isomonodromydeformation}

In \cite{Fuchs1907,Schlesinger1912, JMMS1980, JMU1981I}, the authors viewed the Riemann-Hilbert-Birkhoff correspondence as a deformation problem and considered the deformations of the coefficients $A_i$'s of the linear system \eqref{eq:irregularsystem} with respect to the positions of singularities ($t_i$ parameters) and singular types ($u_i$ parameters) of the system \eqref{eq:irregularsystem} which keep the monodromy data of the the system \eqref{eq:irregularsystem} fixed. Let $\mathcal{T}$ denote the set of monodromy times $\{t_i, 1 \le i \le N; u_j, 1 \le j \le n\}$.

The isomonodromy equation of the linear system \eqref{eq:irregularsystem} with respect to $t_i, u_{\alpha}$'s are given by (c.f.\cite{JMMS1980})
\begin{align}
    & \frac{\partial F}{\partial t_i} = -\frac{A_i}{z - t_i}F,\label{eq:tideformation}\\
    & \frac{\partial F}{\partial u_{\alpha}} = (z E_{\alpha} + B_{\alpha})F,\label{eq:uideformation}
\end{align}
where
\[B_{\alpha} = -\sum \limits_{\beta \ne \alpha}\frac{E_{\alpha}A_{\infty}E_{\beta} + E_{\beta}A_{\infty}E_{\alpha}}{u_{\alpha} - u_{\beta}}, \qquad A_{\infty} = -\sum \limits_{i = 1}^N A_i.\]
The compatibility conditions of the systems \eqref{eq:irregularsystem}, \eqref{eq:tideformation} and \eqref{eq:uideformation} lead to
\begin{equation}\label{eq:zerocurvature}
    \begin{aligned}
        & \left[\frac{\partial}{\partial t_i} + \frac{A_i}{z - t_i}, \frac{\partial}{\partial z} - A(z)\right] = 0,\\
        & \left[\frac{\partial}{\partial u_{\alpha}} - \lambda E_{\alpha} - B_{\alpha}, \frac{\partial}{\partial z} - A(z)\right] = 0.
    \end{aligned}
\end{equation}
They reduce to the following equations for $A_i$'s (the JMMS equation \cite{JMMS1980})
\begin{equation}\label{eq:JMMS}
    \begin{aligned}
        & \frac{\partial A_i}{\partial t_i} = (1 - \delta_{ij})\frac{[A_i, A_j]}{t_i - t_j} + \delta_{ij}\left[U + \sum \limits_{k \ne i}\frac{A_k}{t_i - t_k}, A_j\right],\\
        & \frac{\partial A_j}{\partial u_{\alpha}} = [t_jE_{\alpha} + B_{\alpha}, A_j].
    \end{aligned}
\end{equation}



\subsection{Harnad Duality}\label{sec:Harnadduality}

It was observed in \cite{Harnad1994} that system in the form \eqref{eq:irregularsystem} may have a dual system in the same form which is related to the original system by `Laplace transform' and exchanges the roles of $u_i$'s and $t_j$'s. 

Consider the case when rank $A_i = 1, 1 \le i \le N$ and we express $A_i$'s as
\[A_i = P_i^T Q_i,\]
where $\{(P_i, Q_i)\}_{i = 1, \dots, N}$ are pairs of nonzero $1 \times n$ row vectors.

Let
\[P = \begin{pmatrix} P_1 \\ \vdots \\ P_N \end{pmatrix}, \qquad Q = \begin{pmatrix} Q_1 \\ \vdots \\ Q_N \end{pmatrix}\]
be $N \times n$ matrices with $P_i$'s and $Q_i$'s as row vectors. Note that
\[\sum \limits_{i = 1}^N \frac{A_i}{z - t_i} = P^T (z \text{Id}_N - V)^{-1} Q,\]
where $V = \text{diag}(t_1, \dots, t_N)$, then we can write the differential operator in \eqref{eq:irregularsystem} as
\begin{equation}\label{eq:irregularoperator1}
\frac{d}{dz} - P^T (z\text{Id}_N - V)^{-1}Q - U.
\end{equation}
The dual operator is given by
\begin{equation}\label{eq:irregularoperator2}
\frac{d}{dw} + Q (w\text{Id}_n - U)^{-1}P^T + V.
\end{equation}
Note that formally the linear system \eqref{eq:irregularsystem} is equivalent to
\begin{equation}\label{eq:irregularsystem2}
    \left(\frac{d}{dz} - U\right)X - P^TY = 0, \qquad (z\text{Id}_N - V)Y - QX = 0,
\end{equation}
where $X \in \mathbb{C}^n$ and $Y \in \mathbb{C}^N$. Then the linear system defined by operator \eqref{eq:irregularoperator2} is equivalent to the system 
\begin{equation}\label{eq:dualsystem}
    -\left(\frac{d}{dw} + V\right)Y - QX = 0, \qquad (w\text{Id}_n - U)X - P^TY = 0,
\end{equation}
which is obtained from a formal application of the Laplace transform to \eqref{eq:irregularsystem2} by replacing $\frac{d}{dz}$ and $z\text{Id}_N$ in \eqref{eq:irregularsystem2} by $w\text{Id}_n$ and $-\frac{d}{dw}$, respectively.

The following example connects two linear systems under Harnad duality whose isomonodromy deformation equations are both equivalent to PVI, and the explicit correspondences between their asymptotic behaviors at critical points and the corresponding monodromy formulas will be our main concern in the rest of the paper. 
\begin{example}\label{eg:DualityPVI}
Consider the linear system \eqref{eq:irregularsystem} with $N = 1, n = 3$, $U = {\rm diag}(0, x, 1), V = 0$ and rank $A_1 = 2$, so that the resulting equation is a special case of \eqref{introisoStokeseq1}. The corresponding differential operator can be put in the form of Equation \eqref{eq:irregularoperator1} by taking $P, Q$ to be $2 \times 3$ matrices. By Harnad duality, it's dual differential operator \eqref{eq:irregularoperator2} defines a $2 \times 2$ linear system \eqref{eq:irregularsystem} with only regular singularities at $0, x, 1, \infty$ whose isomonodromy deformation equation is known to be equivalent to PVI (See Section \ref{sec:regularsystem} and Section \ref{sec:irregularsystem} for more detailed discussions). 
\end{example}

\section{ Painlev\'e VI equation and isomonodromic deformation }



\subsection{PVI from isomonodromy deformation of Fuchsian system and Jimbo's formula}\label{sec:regularsystem}
First consider the following case in the setting of Section \ref{sec:isomonodromydeformation}: taking $N = 3, n = 2, V = 0$,  the JMMS equation of the $2 \times 2$ linear system \eqref{eq:irregularsystem} is
    \begin{align}
        & \frac{\partial F}{\partial z} = A(z)F, \qquad A(z) = \frac{A_1}{z-u_1}+ \frac{A_2}{z - u_2} + \frac{A_3}{z - u_3}   = \begin{pmatrix} \label{eq:2by2system}
        a_{11} & a_{12} \\ a_{21} & a_{22}
        \end{pmatrix},\\
        & \frac{\partial F(z)}{\partial u_i} = -\frac{A_i}{z - u_i}F(z). \label{eq:2by2auxiliary}
    \end{align}
    Here  we assume $A_i\in SL(2,\mathbb{C})$ satisfy the following conditions:
    \begin{align} \label{eq:eigen of A in fuchsian sys}
        \text{Eigenvalues of }\ A_i = \pm \frac{\theta_i}{2}, \ \quad \sum \limits_{i = 1}^3 A_i = \begin{pmatrix} 
-\theta_{\infty}\slash 2 & 0 \\ 0 & \theta_{\infty} \slash 2
\end{pmatrix}. 
    \end{align}
\begin{remark}
Linear system \eqref{eq:2by2system} is slightly different from the one given in Example \ref{eg:DualityPVI} where $A_i$'s all have rank $1$. But they are equivalent up to a shift by scalar matrices on $A_i$'s, and both of them induce Painlev\'e VI (c.f. \cite{Jimbo-Miwa1981II, Mahoux1999}). 
\end{remark}
    
    The compatibility conditions for linear systems \eqref{eq:2by2system} and \eqref{eq:2by2auxiliary} give the  following Schlesinger system
    \begin{align}\label{eq:Schlesinger sys}
        \frac{\partial}{\partial u_j}A_i = \frac{[A_i,A_j]}{u_i-u_j}, \ \quad \frac{\partial}{\partial u_i}A_i = -\sum_{j\neq i} \frac{[A_i,A_j]}{u_i-u_j}.
    \end{align}
    The solutions of Schlesinger system, up to conjugation with a invertible
    diagonal matrix, are  in one-to-one correspondence with solutions of Painlev\'e VI equation \eqref{eq:PVI} with $x= (u_2-u_1)\slash(u_3-u_1)$, $y = (q-u_1)\slash(u_3-u_1)$, where $q$ is the root of $a_{12}$ in \eqref{eq:2by2system},  considered up to some symmetries (c.f. \cite{JMU1981I},\cite{Mazzocco2001rational}).
%

Let us now introduce Jimbo's formula to determine the critical behaviour of PVI using isomonodromy deformation of Fuchsian system. 
Let $M_1,M_2,M_3 \in SL(2,\mathbb{C})$ be the monodromy matrices of fundamental solution of system \eqref{eq:2by2system} 
along a given simple loop surrounding $z = u_1, u_2, u_3$, 
and define $M_\infty \in SL(2,\mathbb{C})$ by
\begin{align}\label{eq:monodromy relation}
    M_\infty M_3 M_2 M_1 = {\rm Id_2}.
\end{align}
It provides us seven parameters 
\begin{align}\label{eq:def of pij}
    p_{ij} &= \mathrm{tr} (M_iM_j) , \ 1\leq i < j \leq 3, \\ \label{ptheta}
    p_k & = 2\mathrm{cos}(\pi \theta_k),\ k=1,2,3, \infty,
\end{align}
with the following Jimbo-Fricke cubic relation as a constraint \cite{Jimbo1982, Boalch2005}
\begin{align}\label{eq:relation among pij}
    \begin{split}
        0 &= p_{13}p_{23}p_{12}+p_{12}^2+p_{23}^2+p_{13}^2 \\
        &\quad -(p_1p_3+p_2p_{\infty})p_{13}-(p_3p_2+p_1p_{\infty})p_{23}-(p_2p_1+p_3p_{\infty})p_{12} \\
        &\quad +p_1^2+p_2^2+p_3^2+p_\infty^2+p_1p_2p_3p_\infty-4.
    \end{split}
\end{align}
These parameters characterize the three monodromy matrices (c.f. \cite{Jimbo1982}), and the identity \eqref{eq:relation among pij} is from \eqref{eq:monodromy relation}.

It is proved that for each branch of solutions of Schelesinger system \eqref{eq:Schlesinger sys}, the triples of monodromy matrices $M_1,M_2,M_3$ are uniquely determined (unless $M_k=\pm 1,\ k=1,2,3,\infty$), up to conjugation with the same invertible constant matrix (c.f. \cite{JMU1981I}). Thus the branches of solutions of Painlev\'e VI equation are determined by the triple $[(M_1,M_2,M_3)]$. Jimbo then gave the critical behaviour of generic solutions of PVI, using the parameters of monodromy matrices. Here we take the formulation in \cite{Boalch2005}, which corrects a sign in \cite{Jimbo1982}.
\begin{theorem}[\cite{Jimbo1982}]\label{thm Jimbo's formua for leading term of PVI}
    Suppose we have four matrices $M_j \in SL_2(\mathbb{C})$, for $j=1,2,3,\infty$ satisfying
    
    $(a)$ $M_{\infty}M_{3}M_{2}M_{1}=1$, 

    $(b)$ $M_j$ has eigenvalues $\{{\rm exp}(\pm \pi i\theta_j)\}$ with $\theta_j\notin \mathbb{Z}$,

    $(c)$ $\mathrm{Tr}(M_1M_2)=2\mathrm{cos}(\pi\sigma)$ for some non-zero $\sigma\in \mathbb{C}$ with $0\leq Re(\sigma) < 1$,

    $(d)$ none of the following eight numbers is an even integer:
    $$\theta_1\pm \theta_2\pm \sigma,\ \theta_1\pm\theta_2\mp\sigma,\ \theta_\infty\pm\theta_3\pm\sigma,\ \theta_\infty\pm\theta_3\mp\sigma.$$

    Then the leading term in the asymptotic expansion at zero of corresponding PVI solution $y(x)$ on the branch corresponding to $[(M_1,M_2,M_3)]$ is 
    \begin{align}\label{eq:expansion of J}
        \frac{(\theta_1+\theta_2+\sigma)(-\theta_1+\theta_2+\sigma)(\theta_\infty+\theta_3+\sigma)}{4\sigma^2(\theta_\infty+\theta_3-\sigma)\hat{s}} x^{1-\sigma},
    \end{align}
    where 
    $$\hat{s} = c\times s,\quad s = (a+b)/d, $$
    and 
    \begin{align}\label{eq:a,b,c,d in J}
    \begin{split}
        a &= e^{\pi i\sigma}(i\sin{(\pi\sigma)}\cos{(\pi\sigma_{23})}-\cos{(\pi\theta_2)}\cos{(\pi\theta_\infty)}-\cos{(\pi\theta_1)}\cos{(\pi\theta_3})), \\
        b &=i\sin{(\pi\sigma)}\cos{(\pi\sigma_{13})}+\cos{(\pi\theta_2)}\cos{(\pi\theta_3)}+\cos{(\pi\theta_\infty)}\cos{(\pi\theta_1)},\\
        d&= 4\sin{(\frac{1}{2}\pi(\theta_1+\theta_2-\sigma))}\sin{(\frac{1}{2}\pi(\theta_1-\theta_2+\sigma))}\sin{(\frac{1}{2}\pi(\theta_\infty+\theta_3-\sigma))}\sin{(\frac{1}{2}\pi(\theta_\infty-\theta_3+\sigma))},\\
         c&=\frac{(\Gamma(1-\sigma))^2\hat{\Gamma}(\theta_1+\theta_2+\sigma)\hat{\Gamma}(-\theta_1+\theta_2+\sigma)\hat{\Gamma}(\theta_\infty+\theta_3+\sigma)\hat{\Gamma}(-\theta_\infty+\theta_3+\sigma)}{(\Gamma(1+\sigma))^2\hat{\Gamma}(\theta_1+\theta_2-\sigma)\hat{\Gamma}(-\theta_1+\theta_2-\sigma)\hat{\Gamma}(\theta_\infty+\theta_3-\sigma)\hat{\Gamma}(-\theta_\infty+\theta_3-\sigma)}.
    \end{split}
    \end{align}
    Here $\Gamma(x)$ is the usual gamma function and $\hat{\Gamma}(x) := \Gamma(1+\frac{1}{2}x)$, and $\sigma_{jk}$ is determined by ${\rm Tr}(M_jM_k)) = 2\cos{(\pi\sigma_{jk})}$, with $0\leq {\rm Re}(\sigma_{jk})\leq 1
    $, so $\sigma = \sigma_{12}$.
 \end{theorem}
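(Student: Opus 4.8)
The plan is to follow Jimbo's original isomonodromic-deformation strategy, reconstructing the leading term of $y(x)$ at $x=0$ from the connection problem for the Fuchsian system \eqref{eq:2by2system} as the two singular points $u_1=0$ and $u_2=x$ coalesce. First I would fix the monodromy data $(M_1,M_2,M_3)$ once and for all and use the isomonodromy property: since the monodromy of \eqref{eq:2by2system} is independent of $x$, reading off the small-$x$ behaviour of the residues $A_i(x)$ is equivalent to solving an inverse-monodromy problem in the coalescing limit. The exponent $\sigma$ enters because a loop encircling both $u_1$ and $u_2$ has monodromy with trace $\mathrm{Tr}(M_1M_2)=2\cos(\pi\sigma)$; in the limit this loop surrounds a single effective regular singularity whose residue $A_1+A_2$ therefore has eigenvalues $\pm\sigma/2$.

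The core of the argument is a two-scale (matched asymptotics) analysis of the linear system. In the \emph{outer} region $|z|\gg x$ one sets $x=0$ to leading order, collapsing \eqref{eq:2by2system} to a Gauss hypergeometric system with three regular singularities at $0,1,\infty$, where the residue at $0$ is $A_1+A_2$ with exponents $\pm\sigma/2$ and the data at $1,\infty$ are fixed by $\theta_3,\theta_\infty$. In the \emph{inner} region $|z|=O(x)$ the rescaling $z=x\zeta$ fixes the singularities $0,x$ at $\zeta=0,1$ and sends $z=1$ to $\zeta=\infty$, again producing a hypergeometric system, now with exponents $\pm\theta_1/2,\pm\theta_2/2$ at $0,1$ and $\pm\sigma/2$ at $\infty$. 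Both the inner and outer solutions behave like linear combinations of $z^{\pm\sigma/2}$ in the overlap $x\ll|z|\ll1$, and I would match them there. The matching constant is governed by the Gauss connection formula, whose coefficients are ratios of Gamma functions in the exponents $\theta_1,\theta_2,\theta_3,\theta_\infty,\sigma$; the half-integer exponents are exactly what produce the shifted gamma factors $\hat\Gamma(x)=\Gamma(1+\tfrac12 x)$ assembled into the constant $c$ of \eqref{eq:a,b,c,d in J}.

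With the connection matrices of both hypergeometric subproblems in hand, I would extract $y(x)=(q-u_1)/(u_3-u_1)$ by locating the zero $q$ of the $(1,2)$-entry $a_{12}$ of $A(z)$ to leading order, which yields the $x^{1-\sigma}$ power together with a prefactor built from the ratio of inner and outer normalizations. The remaining task is to convert the analytically defined connection data into the trace coordinates $p_{ij}=\mathrm{Tr}(M_iM_j)$: the trigonometric combinations $a,b,d$ in \eqref{eq:a,b,c,d in J} arise by solving for the off-diagonal connection coefficients in terms of $\sigma_{12},\sigma_{13},\sigma_{23}$ and the local exponents, with the cubic relation \eqref{eq:relation among pij} ensuring consistency of the resulting overdetermined system.

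The hard part will be the bookkeeping of normalizations and the diagonal gauge freedom. As noted after \eqref{eq:genericconditions}, each diagonalizing frame is determined only up to a right diagonal factor, so both the inner and outer fundamental solutions carry an undetermined diagonal scaling; pinning down the genuine constant $J$ — rather than merely the exponent $\sigma$ — requires choosing these normalizations compatibly across the overlap region and carefully tracking which connection-matrix entry controls the position $q$ of the zero of $a_{12}$. This is precisely the place where the sign subtlety arises that \cite{Boalch2005} corrects in the original statement of \cite{Jimbo1982}, so I would treat the normalization and branch choices with particular care.
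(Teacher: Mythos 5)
Your proposal reconstructs Jimbo's original argument, and it is the right sketch of that route; but note that the paper never proves Theorem \ref{thm Jimbo's formua for leading term of PVI} at all. The statement is imported from \cite{Jimbo1982}, in the corrected formulation of \cite{Boalch2005}, and the paper's own relationship to it is the a posteriori verification in Section \ref{commdiag}: Jimbo's formula is recovered (as the arrow $F$ in the commutative diagram) by checking that the composition $F\circ P\circ G\circ Q$ is the identity, where $Q$ is Theorem \ref{mainthm} (PVI parameters $\to$ boundary value $\Phi_0$), $G$ is Theorem \ref{thm: introcatformula} ($\Phi_0\to$ Stokes matrices of the dual $3\times 3$ irregular system), and $P$ is Theorem \ref{thm: relation of pij and S+-} (Stokes matrices $\to$ traces $p_{ij}$). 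So your route --- coalescence of $u_1=0$ and $u_2=x$ in the $2\times 2$ Fuchsian system \eqref{eq:2by2system}, outer/inner hypergeometric reductions with exponents $\pm\sigma/2$ and $\pm\theta_1/2,\pm\theta_2/2$, Gauss connection formulas producing the $\hat\Gamma$ factors of \eqref{eq:a,b,c,d in J}, and extraction of $y$ from the zero of $a_{12}$ --- is genuinely different from anything done in the paper. The trade-off is this: your approach is self-contained and explains \eqref{eq:expansion of J} intrinsically, but its crux is not the ``bookkeeping of normalizations'' you flag at the end; it is the analytic justification that the formally matched inner and outer solutions are genuinely asymptotic to the true isomonodromic solution uniformly in the overlap $x\ll|z|\ll 1$, which is the hard part of \cite{Jimbo1982} (and of later rigorous treatments surveyed in \cite{Guzzetti2015review}) and is left unaddressed in your sketch. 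The paper's route delegates all of that analysis to the established $3\times 3$ results (Theorems \ref{Thm:termwise} and \ref{thm: introcatformula}, where the Gamma functions arise as entries of irregular Stokes matrices rather than from Gauss connection formulas), so that what remains is a finite check of trigonometric and Gamma-function identities --- which is precisely the ``new interpretation'' of Jimbo's formula that the paper is after.
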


According to Theorem \ref{thm Jimbo's formua for leading term of PVI}, we see that the parameters $J$ and $\sigma$
in the leading asymptotics of $y(x) = Jx^{1-\sigma}(1+O(x^\epsilon))$ are related to the monodromy data by \eqref{eq:expansion of J} and $p_{12}=\text{Tr}(M_1M_2)=2\cos{(\pi\sigma)}$.


\subsection{Isomonodromy deformation of a linear system with irregular singularity}\label{sec:irregularsystem}
   
  Next we consider the dual case in the setting of Section \ref{sec:isomonodromydeformation} and \ref{sec:Harnadduality}: taking $N = 1, n = 3$, the JMMS equation of the dual $3 \times 3$ linear system is
    \begin{align}
        & \frac{\partial F}{\partial z} = \left(U + \frac{\Phi(u)}{z}\right)F, \qquad U = \text{diag}(u_1, u_2, u_3), 
                \label{eq:3by3system}\\
        & \frac{\partial F}{\partial u_\alpha} = (zE_{\alpha} + B_{\alpha})F, \quad B_\alpha = \mathrm{ad}_{D_\alpha} \mathrm{ad}_{E_\alpha}\Phi, \ \alpha=1,2,3, 
    \end{align}
    where we assume $u=(u_1,u_2,u_3)\notin \Delta_{\mathbb{C}^3} = \bigcup \limits_{i \ne j} \{u \in \mathbb{C}^3 \ \vert\ u_i - u_j = 0\}$, and 
\begin{align*}
   D_1 = {\rm diag}\left(0,\frac{1}{u_2-u_1},\frac{1}{u_3-u_1}\right),\    D_2 = {\rm diag}\left(\frac{1}{u_1-u_2},0,\frac{1}{u_3-u_2}\right),\   D_3 = {\rm diag}\left(\frac{1}{u_1-u_3},\frac{1}{u_2-u_3},0\right).
    \end{align*}
The compatibility condition gives the following isomonodromy equation (the special case of \eqref{isoeq} for $n=3$)
\begin{equation}\label{eq:isomonodromy3}
        \frac{\partial}{\partial u_\alpha}\Phi(u) = [\text{ad}_{D_\alpha}\text{ad}_{E_\alpha}\Phi(u), \Phi(u)],\ \alpha=1,2,3.
    \end{equation}

Now let us recall the Stokes matrices of system \eqref{eq:3by3system}. 
For our purpose, we only need to consider the system \eqref{eq:3by3system} with the irregular data $U= {\rm diag}(u_1,..., u_n)$ with purely imaginary parameters $u_1,...,u_n$ satisfying ${\rm Im}(u_1)<{\rm Im}(u_2)<\cdots <{\rm Im}(u_n)$. In this case, the Stokes sectors of \eqref{eq:3by3system}
are the right/left half planes ${\rm Sect}_\pm=\{z\in\mathbb{C}~|~ \pm{\rm Re}(z)>0\}$. The following result of the canonical solutions via the Laplace-Borel transforms is standard (c.f. \cite{Balser, LR}). 

Let us choose the branch of ${\rm log}(z)$, which is real on the positive real axis, with a cut along the nonnegative imaginary axis $i\mathbb{R}_{\ge 0}$. Then by convention, ${\rm log}(z)$ has imaginary part $-\pi$ on the negative real axis in ${\rm Sect}_-$.
\begin{theorem}[c.f. \cite{Balser, LR}]
    \label{uniformresum}
On ${\rm Sect}_\pm$ there
is a unique (therefore canonical) fundamental solution $F_\pm:{\rm Sect}_\pm\to {\rm GL}_n(\mathbb{C})$ of equation \eqref{eq:3by3system} such that $F_+\cdot e^{- uz}\cdot z^{-\delta \Phi}$ and $F_-\cdot e^{- uz}\cdot z^{-\delta \Phi}$ can be analytically continued to sectors $\{z\in\mathbb{C}~|-\pi<{\rm arg}(z)<\pi\}$ and $\{z\in\mathbb{C}~|-2\pi<{\rm arg}(z)<0\}$ respectively, and
\begin{align*}
\lim_{z\rightarrow\infty}F_+(z;u)\cdot e^{-uz}\cdot z^{-\delta \Phi}&={\rm Id}_n, \ \ \ \text{as} \ \ -\pi<{\rm arg}(z)< \pi,
\\
\lim_{z\rightarrow\infty}F_-(z;u)\cdot e^{-uz}\cdot z^{-\delta \Phi}&={\rm Id}_n, \ \ \ \text{as} \ \ -2\pi< {\rm arg}(z)<0.
\end{align*}
Here ${\rm Id}_n$ is the rank n identity matrix, and $\delta \Phi$ is the diagonal part of $\Phi$. The solutions $F_\pm$ are called the canonical solutions in ${\rm Sect}_\pm$.
\end{theorem}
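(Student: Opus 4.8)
The plan is to recognize Theorem~\ref{uniformresum} as the standard Borel--Laplace summability statement for the rank-one irregular singularity of \eqref{eq:3by3system} at $z=\infty$, and to read off the two Stokes sectors from the special (purely imaginary, ordered) location of the irregular data $U$. The whole proof then splits into the construction of a formal solution, its summation, and a uniqueness argument.

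First I would construct the formal fundamental solution. Substituting the Ansatz $\hat F(z)=\hat H(z)\,e^{Uz}z^{L}$ with $\hat H(z)=\mathrm{Id}_n+\sum_{k\ge 1}H_k z^{-k}$ and $L$ diagonal into \eqref{eq:3by3system} gives, from the reduced equation $H'=[U,H]+(\Phi H-HL)/z$, the order $z^{-1}$ relation $[U,H_1]=L-\Phi$ and, at order $z^{-m}$, a relation determining the off-diagonal part of $H_{m}$ from $H_{m-1}$ through division by the nonzero differences $u_\alpha-u_\beta$, the diagonal part being fixed by the vanishing of the diagonal of the right-hand side at the next step. Taking the diagonal of the first relation forces $L=\delta\Phi$ (since $[U,\cdot]$ kills diagonals), and the remaining equations then determine $\hat H$ uniquely. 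A routine majorant estimate on this recursion shows $\|H_k\|\le C A^k k!$, i.e. $\hat H$ is Gevrey of order $1$, which is exactly the input needed for summability.

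Next I would Borel-transform. Applying the formal Borel transform of order $1$ to $\hat H-\mathrm{Id}_n$ (sending $z^{-k-1}\mapsto \zeta^{k}/k!$) produces a germ $\hat h(\zeta)$ convergent near $\zeta=0$ and satisfying the convolution equation obtained from \eqref{eq:3by3system}. The key analytic point is that $\hat h$ continues analytically along every ray from the origin and that its only singularities sit at the points $\zeta=u_\alpha-u_\beta$, $\alpha\ne\beta$, which are the ``frequencies'' carried by the off-diagonal exponentials $e^{(u_\alpha-u_\beta)z}$. Because $u_1,\dots,u_n$ are purely imaginary, all these singular points lie on the imaginary axis, so the only singular (Stokes) directions are $\arg\zeta=\pm\tfrac{\pi}{2}$; this is precisely why the two Stokes sectors coincide with the half-planes $\mathrm{Sect}_\pm$. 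For any direction $d$ with $|d|<\tfrac{\pi}{2}$ the Laplace transform $\mathcal L_d\hat h(z)=\int_0^{e^{\mathrm{i}d}\infty}\hat h(\zeta)e^{-z\zeta}\,d\zeta$ converges and defines an analytic $H_+(z)$ with $H_+\sim\hat H$ in the Gevrey-$1$ sense on a sector of opening $>\pi$ bisected by $d$; since no singular direction is crossed as $d$ ranges over $(-\tfrac{\pi}{2},\tfrac{\pi}{2})$, these sums glue to one analytic function on $-\pi<\arg z<\pi$, and $F_+:=H_+\,e^{Uz}z^{\delta\Phi}$ solves \eqref{eq:3by3system} with $F_+e^{-Uz}z^{-\delta\Phi}=H_+\to\mathrm{Id}_n$ there. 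Summing instead in the left half-plane and respecting the chosen branch of $\log z$ yields $F_-$ on $-2\pi<\arg z<0$ in the same way.

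Finally, for uniqueness, if $\tilde F_+$ is a second solution with the stated property, then $G:=\tilde F_+^{-1}F_+$ is a constant matrix and $\tilde H^{-1}H_+$ equals the conjugate of $G$ by $e^{Uz}z^{\delta\Phi}$, whose $(\alpha,\beta)$ entry is $G_{\alpha\beta}\,e^{(u_\alpha-u_\beta)z}z^{(\delta\Phi)_\alpha-(\delta\Phi)_\beta}$. Letting $z\to\infty$ inside $-\pi<\arg z<\pi$: since $u_\alpha-u_\beta$ is a nonzero purely imaginary number, there is always a subdirection of this $2\pi$ sector in which $\mathrm{Re}\big((u_\alpha-u_\beta)z\big)>0$, so the limit forces $G_{\alpha\beta}=0$ for $\alpha\ne\beta$ while $G_{\alpha\alpha}=1$; hence $G=\mathrm{Id}_n$ and $F_+$ is unique. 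I expect the main obstacle to be the analytic continuation of $\hat h$ together with the precise localization and nature of its singularities at $u_\alpha-u_\beta$ (the estimate controlling the convolution equation along rays); for the matrix system this is most cleanly obtained by invoking the exact $1$-summability of formal solutions of linear meromorphic ODEs in the sense of Balser--Braaksma--Ramis--Sibuya, which simultaneously delivers the Gevrey bounds, the continuation, and the Watson-type uniqueness, so that the role of the hypotheses on $U$ here is only to pin down that the singular directions are $\pm\tfrac{\pi}{2}$.
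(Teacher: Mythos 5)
Your proposal is correct and follows essentially the same route the paper itself relies on: the paper gives no in-house proof of this theorem, citing Balser and Loday--Richaud, and your argument (formal solution $\hat H e^{Uz}z^{\delta\Phi}$ with $L=\delta\Phi$ forced by diagonality, Gevrey-1 bounds, Borel transform with singularities at $u_\alpha-u_\beta$ lying on the imaginary axis so that the only singular directions are $\pm\tfrac{\pi}{2}$, Laplace summation glued over nonsingular directions, and the exponential-domination argument for uniqueness) is precisely the standard Borel--Laplace summability proof contained in those references. Nothing further is needed.
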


\begin{definition} 
\label{defiStokes}
The {\it Stokes matrices} of the system \eqref{eq:3by3system} (with respect
to ${\rm Sect}_+$ and the branch of ${\rm log}(z)$) are the elements $S_\pm(u,\Phi)\in {\rm GL}(n)$ determined by
\begin{eqnarray}
F_+(z;u)&=&F_-(z;u)\cdot e^{i\pi\delta \Phi} S_+(u,\Phi),\\
F_-(ze^{-2i\pi };u)&=&F_+(z;u)\cdot S_-(u,\Phi)e^{-i\pi\delta\Phi},
\end{eqnarray}
where the first (resp. second) identity is understood to hold in ${\rm Sect}_-$
(resp. ${\rm Sect}_+$) after $ F_+$ (resp. $F_-$)
has been analytically continued clockwise. 
\end{definition} 
The prescribed asymptotics of $F_\pm(z;u)$ at $z=0$, as well as the identities in Definition \ref{defiStokes}, ensures that the Stokes matrices $S_+(u,\Phi)$ and $S_-(u,\Phi)$ are upper and lower triangular matrices respectively, with diagonal part are both $e^{-i\pi\delta \Phi}$ (c.f. \cite[Chapter 9.1]{Balser}).

Analogous to Theorem \ref{thm Jimbo's formua for leading term of PVI} for PVI, Theorem \ref{Thm:termwise} and \ref{thm: introcatformula} for \eqref{eq:isomonodromy3} characterize the asymptotic behavior of $\Phi(u)$ at a critical point and the monodromy problem of the associated linear system \eqref{eq:3by3system}.

\subsection{The correspondence between the monodromy and solutions under the duality}\label{sec:monodromyduality}

Under the Harnad duality \cite{Harnad1994}, both the monodromy of the linear equations \eqref{eq:2by2system} and \eqref{eq:3by3system} and the solutions of the equation \eqref{eq:Schlesinger sys} (therefore \eqref{eq:PVI}) and \eqref{eq:isomonodromy3} have a correspondence.

First note that if $\Phi(u)$ is a solution of \eqref{eq:isomonodromy3}, so is the translation $\Phi' = \Phi - \lambda \cdot {\rm Id}_3$, with $\lambda$ a constant. Thus in the following, we can assume that the solutions of \eqref{eq:isomonodromy3} satisfy \eqref{eq:restr of diag of phi}. Note that the condition \eqref{eq:restr of eigen of phi} is equivalent to $\theta_\infty \neq 0,\  \pm (\theta_1+\theta_2+\theta_3)$.



Then under the duality between the $3\times 3$ system \eqref{eq:3by3system} and the $2\times 2$ system \eqref{eq:2by2system}, the parameters $\theta_1,\theta_2,\theta_3,\theta_\infty$ in \eqref{eq:eigen of A in fuchsian sys} coincide with the ones in \eqref{eq:restr of diag of phi}-\eqref{eq:restr of eigen of phi}. And we have the following correspondence between the associated monodromy data.

\begin{theorem}\cite{Balser-Jurkat-Lutz1981, Boalch2005, Degano-Guzzetti2023}\label{thm: relation of pij and S+-}
The monodromy parameters $p_{ij}$ of the Fuchsian system \eqref{eq:2by2system} are expressed in terms of the entries of the Stokes matrices $S_{\pm}(u,\Phi(u))$ of the dual system \eqref{eq:3by3system} by
\begin{align}\label{eq:relation of pij and S+-}
        p_{ij} &= 2\cos{\pi(\theta_i-\theta_j)}-(S_+)_{ij}(S_-^{-1})_{ji}.
    \end{align}
Here $\theta_k$'s are defined in \eqref{eq:eigen of A in fuchsian sys}, and $p_{ij}$'s are defined in \eqref{eq:def of pij}.
\end{theorem}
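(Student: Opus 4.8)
The plan is to pass through the explicit Laplace transform underlying the Harnad duality of Section \ref{sec:Harnadduality} and to read off the monodromy generators $M_1,M_2,M_3$ of the $2\times 2$ Fuchsian system \eqref{eq:2by2system} from the Stokes data of the dual $3\times 3$ system \eqref{eq:3by3system}. Concretely, I would first fix a fundamental solution of \eqref{eq:2by2system} whose columns are Borel--Laplace transforms, along admissible rays, of the canonical solutions $F_\pm$ of \eqref{eq:3by3system} provided by Theorem \ref{uniformresum}. Under this transform the three regular singularities $z=u_1,u_2,u_3$ of the Fuchsian system correspond to the three exponential directions of the irregular singularity $z=\infty$ of \eqref{eq:3by3system} determined by the leading diagonal $U={\rm diag}(u_1,u_2,u_3)$; crossing the ray attached to $u_k$ produces a jump governed by one Stokes factor, so that each loop monodromy $M_k$ is expressed through the entries of $S_\pm$ and the formal monodromy $e^{-{\rm i}\pi\delta\Phi}$.

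The second step is to make the expression for $M_k$ explicit, following the Balser--Jurkat--Lutz and Boalch dictionary \cite{Balser-Jurkat-Lutz1981, Boalch2005}. Here I would use that $S_+$ is upper triangular and $S_-$ lower triangular, both with diagonal $e^{-{\rm i}\pi\delta\Phi}$ (recalled just after Definition \ref{defiStokes}), together with the normalization $\delta\Phi=-{\rm diag}(\theta_1,\theta_2,\theta_3)$ from \eqref{eq:restr of diag of phi}, so that the diagonal contribution of $M_k$ carries the exponents $e^{\pm{\rm i}\pi\theta_k}$ matching the prescribed eigenvalues $\{e^{\pm{\rm i}\pi\theta_k}\}$ of Theorem \ref{thm Jimbo's formua for leading term of PVI}(b) and the rank-two structure of the residue $\Phi$. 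The outcome is a presentation of each $M_k$, valid up to a single overall conjugation that is common to all three matrices and hence invisible to the trace coordinates $p_{ij}$.

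Finally I would substitute into $p_{ij}={\rm tr}(M_iM_j)$ from \eqref{eq:def of pij} and separate the result into its diagonal and off-diagonal parts. The diagonal (formal-monodromy) part contributes $e^{{\rm i}\pi(\theta_i-\theta_j)}+e^{-{\rm i}\pi(\theta_i-\theta_j)}=2\cos\pi(\theta_i-\theta_j)$, while the unique surviving bilinear cross term between the super- and sub-diagonal Stokes entries collapses to $-(S_+)_{ij}(S_-^{-1})_{ji}$, giving \eqref{eq:relation of pij and S+-}. As a consistency check I would verify that the ordered product $M_3M_2M_1$ reproduces the full monodromy of \eqref{eq:3by3system} at $\infty$ and is compatible with the constraint $M_\infty M_3M_2M_1={\rm Id}_2$ of \eqref{eq:monodromy relation}.

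I expect the main obstacle to be the precise identification of the $M_k$ through the Borel--Laplace correspondence: fixing the branch choices and the common conjugation so that the off-diagonal contribution to the trace reduces to the single product $(S_+)_{ij}(S_-^{-1})_{ji}$ rather than a longer sum over intermediate indices. This is exactly the point where the triangularity of $S_\pm$ and the rank-two structure of $\Phi$ must be exploited, and where the most care is needed to match conventions with \cite{Degano-Guzzetti2023}.
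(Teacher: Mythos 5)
Before comparing: the paper itself gives no proof of Theorem \ref{thm: relation of pij and S+-} --- it is imported with the citations \cite{Balser-Jurkat-Lutz1981, Boalch2005, Degano-Guzzetti2023} --- so your proposal can only be measured against the arguments of those references. Your overall strategy (Borel--Laplace transform of the canonical solutions of \eqref{eq:3by3system}, expression of the Fuchsian monodromies $M_k$ through the Stokes data, then computation of $p_{ij}=\mathrm{tr}(M_iM_j)$ from \eqref{eq:def of pij}) is indeed the strategy of those references, so the route is the right one.

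As a standalone proof, however, the proposal has genuine gaps. First, a dimensional conflation: the Borel--Laplace transforms of the columns of $F_\pm$ are $\mathbb{C}^3$-valued and solve the $3\times 3$ Fuchsian system obtained by transforming \eqref{eq:3by3system} (a system with rank-one residues at $u_1,u_2,u_3$, hence pseudo-reflection monodromy); they cannot directly constitute ``a fundamental solution of \eqref{eq:2by2system}''. To land in the $2\times 2$ system one must use the rank-two factorization $\Phi=P^TQ$ and the intertwiner implicit in \eqref{eq:irregularsystem2}--\eqref{eq:dualsystem} (schematically $Y=(z\mathrm{Id}-V)^{-1}QX$), or equivalently relate the pseudo-reflection monodromy of the $3\times 3$ Fuchsian system to the $2\times 2$ monodromy; this bridging step is entirely absent from your outline. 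Second, the actual content of the theorem --- the explicit formula for each $M_k$ as a specific rank-one perturbation whose coefficients are entries of $S_\pm$ and of the formal monodromy (the Balser--Jurkat--Lutz connection formula) --- is not derived but invoked as ``the dictionary'' of \cite{Balser-Jurkat-Lutz1981, Boalch2005}; since identity \eqref{eq:relation of pij and S+-} is precisely that dictionary read off in trace coordinates, deferring it to the references makes the argument circular within a blind proof. Third, the passage between the Harnad-dual system, whose residues at $u_k$ have rank one, and the $SL_2$-normalized system \eqref{eq:2by2system}, whose residues have eigenvalues $\pm\theta_k/2$ (see the remark following \eqref{eq:eigen of A in fuchsian sys}), involves scalar shifts of the residues; these multiply the $M_k$ by scalar factors and hence rescale the traces $p_{ij}$. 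Without tracking this bookkeeping one cannot certify that the constant term is $2\cos\pi(\theta_i-\theta_j)$ rather than $2\cos\pi(\theta_i+\theta_j)$ or a shifted variant --- exactly the convention-sensitive point your last paragraph flags but does not resolve.
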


\begin{remark}
    Our definition of $S_+$, $S_-$ are different from the Stokes matrices $S_1$, $S_2$ defined in \cite{Degano-Guzzetti2023}. But after permutation, they are simply related by 
    \begin{align}
        S_1 &= e^{\mathrm{i}\pi\delta\Phi}\cdot S_+;\\
        S_2 &= S_-\cdot e^{\mathrm{i}\pi\delta\Phi}.
    \end{align}
\end{remark}
\begin{remark}
The quantization of monodromy spaces of the Fuchsian system \eqref{eq:2by2system} and the dual system \eqref{eq:3by3system} are studied in \cite{CMR, Marzzocco2023generalized}, and the quantum analog of Theorem \ref{thm: relation of pij and S+-} is given in \cite{Marzzocco2023generalized}, with a relation to the generalized double affine Hecke algebras and quantum cluster varieties \cite{CMR}.

\end{remark}
Furthermore, the solutions of PVI \eqref{eq:PVI} (equivalently the equation \eqref{eq:Schlesinger sys}) are in one-to-one correspondence to the solutions of isomonodromy equations \eqref{eq:isomonodromy3} , up to conjugation by a nonsingular diagonal matrix. More precisely, we have the following theorem from Mazzocco \cite{Mazzocco2002}, Degano and Guzzetti \cite{Degano-Guzzetti2023}.

\begin{theorem}\cite{Degano-Guzzetti2023}\label{thm: Expression of Omega in solution of PVI}
The $3\times 3$ isomonodromy equation \eqref{eq:isomonodromy3} with the constraints \eqref{eq:restr of diag of phi} and \eqref{eq:restr of eigen of phi}, is equivalent to
PVI \eqref{eq:PVI}.
Furthermore, there is a one-to-one correspondence between transcendents $y(x)$ and equivalence classes
\[\Big\{K^0 \cdot \Phi(u) \cdot (K^0)^{-1}, \ K^0 = {\rm diag}(k_1^0,k_2^0,1), \  k_1^0,k_2^0\in \mathbb{C}\setminus \{0\},\Big\} \]
of solutions of \eqref{eq:isomonodromy3}. And under the correspondence the following explicit formula holds: let us introduce
    \begin{align} \label{eq:defini of Omega}
        x = \frac{u_2-u_1}{u_3-u_1},\quad  \Omega(x) = (u_3-u_1)^{\delta\Phi}\Phi(u)(u_3-u_1)^{-\delta\Phi},
    \end{align}
then
    \begin{align*}
        & \Omega_{12} = \frac{k_1(x)}{k_2(x)} \cdot \frac{f(\theta_1, \theta_2, \theta_3, \theta_{\infty}; x, y)}{2(1 - x)y}, \ \ \ \Omega_{21} = \frac{k_2(x)}{k_1(x)} \cdot \frac{f(-\theta_1, -\theta_2, \theta_3, \theta_{\infty}; x, y)}{2(y - x)},\\
        & \Omega_{13} = k_1(x) \cdot \frac{f(\theta_1, \theta_1 - \theta_3 - \theta_{\infty}, \theta_3, \theta_{\infty}; x, y)}{2(x - 1)y}, \ \  \ \Omega_{31} = \frac{1}{k_1(x)} \cdot \frac{f(-\theta_1, -\theta_1 + \theta_3 - \theta_{\infty}, \theta_3, \theta_{\infty}; x, y)}{2x(y-1)},\\
        & \Omega_{23} = k_2(x) \cdot \frac{f(-\theta_2 + \theta_3 + \theta_{\infty}, -\theta_2, \theta_3, \theta_{\infty}; x, y)}{2(x - y)}, \ \ \ \Omega_{32} = \frac{1}{k_2(x)} \cdot \frac{f(\theta_2 - \theta_3 + \theta_{\infty}, \theta_2, \theta_3, \theta_{\infty}; x, y)}{2x(1 - y)}.
    \end{align*}
Here
\[f(\theta_1, \theta_2, \theta_3, \theta_{\infty}; x, y) := (x - x^2)\frac{dy}{dx} + (1 - \theta_{\infty})y^2 + [(\theta_2 + \theta_{\infty})x + \theta_1 - \theta_2 - 1]y - \theta_1 x,\]
and
    \begin{align*}
        & l_1(x) := \frac{f(\theta_1, \theta_2, \theta_3, \theta_1 + \theta_3 - \theta_2; x, y)}{2x(1-x)(1-y)y},\\
        & l_2(x) := \frac{f(-\theta_1, -\theta_2, \theta_3, \theta_3 + \theta_2 - \theta_1; x, y)}{2x(y-1)(x-y)} + \frac{\theta_2 - \theta_3}{x - 1},\\
        & k_i(x) = k_i^0\exp\left({\int^xl_i(\xi)d\xi}\right), \quad k_i^0 \in \mathbb{C} \backslash \{0\}, \qquad i = 1, 2.
    \end{align*}
\end{theorem}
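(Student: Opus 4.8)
The plan is to read off $\Phi_0$ from the $x\to 0$ asymptotics of the corresponding Painlev\'e transcendent, combining the Harnad-duality dictionary of Theorem \ref{thm: Expression of Omega in solution of PVI} with Jimbo's expansion \eqref{VIasy} and the regularized limit of Theorem \ref{Thm:termwise}. First I would use \eqref{eq:defini of Omega} and the normalization \eqref{eq:restr of diag of phi} to write $\Phi_{ij}(u)=(u_3-u_1)^{\theta_i-\theta_j}\,\Omega_{ij}(x)$ with $\delta\Phi=-\mathrm{diag}(\theta_1,\theta_2,\theta_3)$ constant, so that the regime $x=(u_2-u_1)/(u_3-u_1)\to 0$ is precisely $u_3\to\infty$. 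For $n=3$ the two relations \eqref{limit1}--\eqref{limit2} read $\Phi_0=(u_2-u_1)^{\delta_1\Phi_1}\Phi_2(u_2-u_1)^{-\delta_1\Phi_1}$ and $\Phi_2=\lim_{u_3\to\infty}t^{\delta_2\Phi_2}\Phi\,t^{-\delta_2\Phi_2}$ with $t=(u_3-u_2)/(u_2-u_1)\sim 1/x$, so the whole statement reduces to extracting the right coefficients of the $x$-expansion of $\Omega(x)$, followed by two conjugations.

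Next I would substitute $y(x)=Jx^{1-\sigma}+J_2x+J_1x^{1+\sigma}+\cdots$ from \eqref{VIasy} into the explicit formulas of Theorem \ref{thm: Expression of Omega in solution of PVI}. Expanding the logarithmic derivatives $l_1,l_2$ and integrating yields $k_1(x)\sim\tilde k_1\,x^{(\theta_1-\theta_2-\sigma)/2}$ and $k_2(x)\sim\tilde k_2\,x^{(\theta_2-\theta_1-\sigma)/2}$, each carrying a correction series in $x^{\sigma}$ and with $\tilde k_1,\tilde k_2$ the only free constants. Substituting these, every entry $\Omega_{ij}(x)$ acquires an expansion whose exponents differ by integer multiples of $\sigma$; I would record both its leading coefficient and its first subleading coefficient, since the subleading one is exactly where $J_1,J_2$ (equivalently the factors $\theta_1\pm\theta_2\pm\sigma$ and $\theta_3+\theta_\infty+\sigma$) enter.

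The core step is to evaluate the block-conjugation limit. The limiting upper-left $2\times 2$ block $B$ of $\Phi_2$ has trace $-\theta_1-\theta_2$ and eigenvalue difference $\sigma$, hence eigenvalues $\lambda^{(2)}_{1,2}=\tfrac12(-\theta_1-\theta_2\pm\sigma)$; I would diagonalize $B=R\Lambda R^{-1}$ and resolve $t^{\pm B}$ in the eigenbasis. Conjugation by $t^{\delta_2\Phi_2}$ rescales the coupling column $(\Phi_{13},\Phi_{23})^{\top}$ and row $(\Phi_{31},\Phi_{32})$ by $t^{\pm\lambda^{(2)}_j}$; matching powers of $t$ shows the finite limit receives two contributions, the $\lambda^{(2)}_2$-component of the leading coefficient and the $\lambda^{(2)}_1$-component of the subleading coefficient, while the would-be divergent $\lambda^{(2)}_1$-part of the leading term cancels (its cancellation is guaranteed by the existence half of Theorem \ref{Thm:termwise}). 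Re-assembling these through $R$ gives all entries of $\Phi_2$; the outer diagonal conjugations then cancel every power of $(u_2-u_1)$ and $(u_3-u_1)$, leaving a constant $\Phi_0$ in which $\tilde k_1,\tilde k_2$ appear only as an overall diagonal conjugation. Setting $K^0=\mathrm{diag}(\tilde k_1,\tilde k_2,1)$ gauges them away, and I would check $\Phi_0'=(K^0)^{-1}\Phi_0K^0$ equals \eqref{111} entrywise; existence and uniqueness of $K^0$ amount to the matching of the diagonal-conjugation invariants of $\Phi_0$ (the diagonal, the products $\varphi_{ij}\varphi_{ji}$, the triple products), which holds because $\sigma,J\neq 0$ keep all off-diagonal entries nonzero. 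Since conjugation preserves eigenvalues, the eigenvalues of the upper-left $2\times 2$ block of $\Phi_0$ are those of $B$, with difference $\sigma$, so condition \eqref{boundcondtion} for $k=3$, namely $|\mathrm{Re}(\lambda^{(2)}_1-\lambda^{(2)}_2)|<1$, is exactly Jimbo's $0\le\mathrm{Re}(\sigma)<1$ in \eqref{VIbound}.

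The hard part is precisely this block-conjugation limit: because $B$ is not diagonal, $t^{B}$ mixes the naive leading coefficient with the subleading one, and it is this mixing, governed by the eigenvectors $R$ of $B$, that promotes the $J_1,J_2$-data of \eqref{VIasy} into the genuinely nontrivial entries of \eqref{111} (such as the $(\theta_1-\theta_2-\sigma)(\theta_1+\theta_2-\sigma)(\theta_3+\theta_\infty+\sigma)/(8\sigma^2 J)$ term). Verifying the cancellation of the divergent $\lambda^{(2)}_1$-parts and collecting the surviving pieces is the technical crux; what remains is a finite, if lengthy, algebraic simplification. The borderline $\mathrm{Re}(\sigma)=0$ case of \eqref{VIasy} is treated identically once the regularization built into Theorem \ref{Thm:termwise} is invoked, and the confluent $\sigma=0$ case of Remark \ref{rem:extend sigma} follows by replacing the $x^{\pm\sigma}$ expansion with its logarithmic limit.
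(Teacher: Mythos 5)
There is a fundamental mismatch here: your proposal does not prove the statement in question, it \emph{uses} it. The statement to be established is the Harnad-duality dictionary itself (Theorem \ref{thm: Expression of Omega in solution of PVI}, due to Degano--Guzzetti): that the $3\times 3$ isomonodromy equation \eqref{eq:isomonodromy3} with the constraints \eqref{eq:restr of diag of phi}--\eqref{eq:restr of eigen of phi} is equivalent to PVI, that transcendents $y(x)$ are in one-to-one correspondence with diagonal-conjugation classes of solutions $\Phi(u)$, and that the entries $\Omega_{ij}$ are given by the explicit rational expressions in $y$, $dy/dx$ and the gauge factors $k_1(x),k_2(x)$. Your very first step --- ``substitute $y(x)=Jx^{1-\sigma}+\cdots$ into the explicit formulas of Theorem \ref{thm: Expression of Omega in solution of PVI}'' --- takes all of that as given, so the argument is circular as a proof of this theorem. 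What your sketch actually outlines is the derivation of the boundary value $\Phi_0$ in terms of $(\sigma,J,\theta_1,\theta_2,\theta_3,\theta_\infty)$, i.e.\ Theorem \ref{mainthm}; indeed it closely parallels the paper's Section \ref{p1.4} (the expansions of $k_1,k_2$ in Lemma \ref{lemma k1,k2}, of $\Omega_{ij}$ in Lemma \ref{lemma Omega}, and the limits of $A(x)$ and $B(x)$), with the one organizational difference that the paper computes the regularized limit directly through $x^{\pm\delta_2\Phi_0}$ conjugation rather than by diagonalizing the $2\times 2$ block and tracking eigencomponents.

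A genuine proof of the stated theorem would have to carry out the duality construction itself: starting from the $2\times 2$ Fuchsian system \eqref{eq:2by2system} (equivalently, the rank-one form of Example \ref{eg:DualityPVI}), apply the Laplace transform of Section \ref{sec:Harnadduality} to produce the dual $3\times 3$ system \eqref{eq:3by3system}, show that the Schlesinger equations \eqref{eq:Schlesinger sys} map to \eqref{eq:isomonodromy3}, identify $y(x)$ with the root of the relevant matrix entry, and then solve for the entries of $\Phi$ in terms of $y$, $dy/dx$ and the integrated gauge functions $k_i(x)=k_i^0\exp(\int^x l_i)$, verifying that the residual freedom is exactly conjugation by $K^0={\rm diag}(k_1^0,k_2^0,1)$. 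None of this appears in your proposal. Note also that the paper itself does not prove this theorem; it imports it from \cite{Degano-Guzzetti2023} as an input to Theorem \ref{mainthm}, which is precisely the result your sketch addresses.
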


\begin{remark}
The function $f(\theta_1, \theta_2, \theta_3, \theta_{\infty}; x, y)$ has the following explanation from the Hamiltonian realization of $PVI$: let $z(x)$ in \cite[(2.51)]{Degano-Guzzetti2023} be the momentum coordinate conjugate to $y(x)$ (c.f. \cite[(3.72)]{Harnad1994}), then \[z(x) = -\frac{f(\theta_1, -\theta_2, \theta_3, \theta_{1}+\theta_3 +\theta_2; x, y)}{2y(y-x)(y-1)}.\]
\end{remark}

\section{The correspondence of the boundary values and a new interpretation of Jimbo's formula}\label{lastsec}
\subsection{The proof of Theorem \ref{mainthm}}\label{p1.4}
Now we proceed to give a proof of Theorem \ref{mainthm} which is divided into several steps.
Suppose we are given a generic $3\times 3$ matrix solution $\Phi(u)$ of \eqref{eq:isomonodromy3}, 
then as a direct consequence of Theorem \ref{Thm:termwise} we have
\begin{cor}\label{lemma:limit of Omega}
     For the $3\times 3$ matrix function $\Omega(x)$ defined from $\Phi(u)$ by \eqref{eq:defini of Omega}, there exits a constant $3\times 3$ matrix $\Phi_0$, such that
     \begin{align}
    A(x)&:=\delta_2 (x^{\delta\Phi}\Omega(x)x^{-\delta\Phi})\rightarrow \delta_2\Phi_0,\quad \text{as } x\rightarrow 0, \label{eq:delta Phi0} \\
    B(x)&:=x^{-\delta_2\Phi_0} x^{\delta\Phi} \Omega(x)x^{-\delta\Phi}x^{\delta_2\Phi_0} \rightarrow \Phi_0, \quad \text{as }  x\rightarrow 0. \label{eq:Phi0}
\end{align}
By definition, this $\Phi_0$ is the boundary value of $\Phi(u)$ as $\frac{u_3-u_2}{u_2-u_1}\rightarrow \infty$ (equivalently as $x=\frac{u_2-u_1}{u_3-u_1}\rightarrow 0$). 
\end{cor}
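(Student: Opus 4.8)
The plan is to reduce both limits to the iterated asymptotics of Theorem \ref{Thm:termwise} for $n=3$ (with $\Phi=\Phi_3$) after an algebraic simplification of the conjugated matrix. Since $\delta\Phi$ is a constant diagonal matrix and $x(u_3-u_1)=u_2-u_1$, the scalar powers on each diagonal entry combine, giving
\[
x^{\delta\Phi}\Omega(x)x^{-\delta\Phi}=(u_2-u_1)^{\delta\Phi}\Phi(u)(u_2-u_1)^{-\delta\Phi}=:N(x),
\]
which is manifestly a function of $x$ alone. Fixing $u_1,u_2$ and sending $u_3\to\infty$ realizes $x=\frac{u_2-u_1}{u_3-u_1}\to 0$, and this is exactly the regime $u_3\to\infty$ covered by Theorem \ref{Thm:termwise}. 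Two elementary facts will be used repeatedly: the operation $\delta_2$ commutes with conjugation by a diagonal matrix, and the diagonal parts agree, $\delta\Phi_1=\delta\Phi_2=\delta\Phi$, which follows from \eqref{limit1} together with the constancy of $\delta\Phi$; in particular the exponent $\delta_1\Phi_1$ in the $k=2$ identity of Theorem \ref{Thm:termwise} equals $\delta\Phi$.

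For \eqref{eq:delta Phi0} I would apply $\delta_2$ and pull it through the diagonal conjugation, $A(x)=\delta_2 N(x)=(u_2-u_1)^{\delta\Phi}(\delta_2\Phi_3)(u_2-u_1)^{-\delta\Phi}$. By \eqref{limit1} with $k=3$ we have $\delta_2\Phi_3\to\delta_2\Phi_2$ as $u_3\to\infty$, while the exact $k=2$ identity of Theorem \ref{Thm:termwise}, namely $(u_2-u_1)^{\delta\Phi}\Phi_2(u_2-u_1)^{-\delta\Phi}=\Phi_1$, gives $(u_2-u_1)^{\delta\Phi}(\delta_2\Phi_2)(u_2-u_1)^{-\delta\Phi}=\delta_2\Phi_1$. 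Setting $\Phi_0:=\Phi_1$ then yields $A(x)\to\delta_2\Phi_0$, which simultaneously produces the constant matrix $\Phi_0$ and identifies it with the boundary value of Theorem \ref{Thm:termwise}.

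For \eqref{eq:Phi0} I would introduce $t=\frac{u_3-u_2}{u_2-u_1}$, so that $xt=1-x$ and $t\to\infty$ as $x\to 0$, and set $R(t):=t^{\delta_2\Phi_2}\Phi_3 t^{-\delta_2\Phi_2}$, so that \eqref{limit2} with $k=3$ reads $R(t)\to\Phi_2$. Rewriting the $k=2$ identity as $\delta_2\Phi_2=(u_2-u_1)^{-\delta\Phi}(\delta_2\Phi_0)(u_2-u_1)^{\delta\Phi}$, hence $t^{\pm\delta_2\Phi_2}=(u_2-u_1)^{-\delta\Phi}t^{\pm\delta_2\Phi_0}(u_2-u_1)^{\delta\Phi}$, I substitute $\Phi_3=t^{-\delta_2\Phi_2}R(t)t^{\delta_2\Phi_2}$ into $B(x)=x^{-\delta_2\Phi_0}N(x)x^{\delta_2\Phi_0}$ and cancel all diagonal factors, collapsing the scalar powers through $x^{-\delta_2\Phi_0}t^{-\delta_2\Phi_0}=(xt)^{-\delta_2\Phi_0}=(1-x)^{-\delta_2\Phi_0}$ (the two exponents being the same matrix, they commute). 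This leaves
\[
B(x)=(1-x)^{-\delta_2\Phi_0}(u_2-u_1)^{\delta\Phi}R(t)(u_2-u_1)^{-\delta\Phi}(1-x)^{\delta_2\Phi_0},
\]
and letting $x\to 0$ (so $R(t)\to\Phi_2$ and $(1-x)^{\pm\delta_2\Phi_0}\to{\rm Id}$) gives $B(x)\to(u_2-u_1)^{\delta\Phi}\Phi_2(u_2-u_1)^{-\delta\Phi}=\Phi_0$ by the $k=2$ identity.

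The individual steps are short, so I expect the genuine difficulty to be bookkeeping rather than structural. The delicate points are: choosing consistent branches for the non-integer matrix powers so that $x^{\delta\Phi}(u_3-u_1)^{\delta\Phi}=(u_2-u_1)^{\delta\Phi}$ and $x^{-\delta_2\Phi_0}t^{-\delta_2\Phi_0}=(1-x)^{-\delta_2\Phi_0}$ hold on the nose rather than up to a factor $e^{2\pi\mathrm{i}k\,\delta_2\Phi_0}$; and verifying that $\delta_2$ truly commutes with conjugation by $(u_2-u_1)^{\delta\Phi}$ even though $\delta_2\Phi_0$ is not diagonal. The one conceptual obstacle is reconciling the two distinct conjugation exponents that appear---$\delta_2\Phi_2$ (depending on $u_1,u_2$) in \eqref{limit2} versus the constant $\delta_2\Phi_0$ in the statement---and this is resolved precisely by the exact $k=2$ identity, which intertwines them via conjugation by $(u_2-u_1)^{\delta\Phi}$.
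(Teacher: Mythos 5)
Your proof is correct and takes essentially the same approach as the paper: the paper states this corollary as a direct consequence of Theorem \ref{Thm:termwise} specialized to $n=3$ (with $u_1,u_2$ fixed and $u_3\to\infty$, so that $x\to 0$), which is exactly the reduction you carry out. Your bookkeeping---combining the scalar powers via $x(u_3-u_1)=u_2-u_1$ and $xt=1-x$, using that $\delta_2$ commutes with conjugation by diagonal matrices, and intertwining $\delta_2\Phi_2$ with $\delta_2\Phi_0$ through the exact $k=2$ identity---simply supplies the details the paper leaves implicit.
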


In the following, let us compute explicitly $\Phi_0$ from the formula of $\Omega(x)$ given in Theorem \ref{thm: Expression of Omega in solution of PVI}. In the computation, let us assume ${\rm Re}(\sigma)>0$, the formula in Theorem \ref{mainthm} can be continuously extended to the case ${\rm Re}(\sigma)=0$ and $\sigma\ne 0$ (see Remark \ref{rem:extend sigma}).


\subsubsection{Asymptotic expansions of $k_1(x)$ and $k_2(x)$}
Recall that the parameters $\theta_1,\theta_2,\theta_3,\theta_\infty$ in theorem \ref{thm: Expression of Omega in solution of PVI} are given in \eqref{eq:restr of diag of phi}- \eqref{eq:restr of eigen of phi}, and $\sigma, J$ are parameters in the leading asymptotics of $y(x)$ as in \eqref{VIasy}.
\begin{lemma}\label{lemma k1,k2}
    
    $k_1(x)$ and $k_2(x)$ in Theorem \ref{thm: Expression of Omega in solution of PVI} have the following asymptotic expansions as $x\rightarrow 0$
    \begin{align}
        k_1(x) &= k_1^0\cdot x^{\frac{\theta_1-\theta_2-\sigma}{2}}(1+f_0(x))(1-\frac{\theta_1}{2\sigma J}x^\sigma) + o(x^{\frac{\theta_1-\theta_2+\sigma}{2}}), \label{k1(x)} \\
        k_2(x) &= k_2^0 \cdot x^{\frac{-\theta_1+\theta_2-\sigma}{2}} (1+f_0(x))(1+\frac{\theta_2-\sigma}{2\sigma J}x^{\sigma})+ o(x^{\frac{-\theta_1+\theta_2+\sigma}{2}}),\label{k2(x)}
    \end{align}

    where $f_0(x)={\rm exp}\left(\int_0^{x} \left(\frac{1}{2y(1-y)}\frac{dy}{d\xi}-\frac{1-\sigma}{2\xi}+ \frac{-\theta_3 y}{2\xi(1-y)}\right)\ d\xi\right)-1=o(1)$, as $x\rightarrow 0$.
    
\end{lemma}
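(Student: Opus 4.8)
The plan is to substitute the known critical expansion \eqref{VIasy} of $y(x)$ directly into the explicit logarithmic derivatives $l_1,l_2$ of Theorem \ref{thm: Expression of Omega in solution of PVI}, then integrate and exponentiate. Throughout I work in the regime $\mathrm{Re}(\sigma)>0$ fixed for this computation. The first simplification, which eliminates $dy/dx$ from most of the bookkeeping, is that the term $(x-x^2)\,dy/dx$ of $f$ cancels against the denominators to leave a clean logarithmic-derivative factor: a direct computation gives
\begin{align*}
l_1(x)&=\frac{1}{2y(1-y)}\frac{dy}{dx}+R_1(x),\\
l_2(x)&=\frac{(1-x)}{2(1-y)(y-x)}\frac{dy}{dx}+R_2(x)+\frac{\theta_2-\theta_3}{x-1},
\end{align*}
where $R_1,R_2$ are rational in $x$ and $y$ with no derivative.

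The core of the argument is to produce a decomposition $l_i(x)=\frac{c_i}{2x}+g(x)+h_i(x)$, with $c_1=\theta_1-\theta_2-\sigma$ and $c_2=-\theta_1+\theta_2-\sigma$, in which $g$ is exactly the integrand appearing in $f_0(x)$ and $h_i$ is an integrable remainder. To obtain it I would insert $y=Jx^{1-\sigma}+J_2x+J_1x^{1+\sigma}+\cdots$ and expand each factor $1/y$, $1/(y-x)$, $1/(1-y)$, $1/(1-x)$ as a series in the scales $x,\,x^{1-\sigma},\,x^{\sigma}$. The $1/x$ pole is fixed first: the logarithmic-derivative term contributes $\tfrac{1-\sigma}{2x}$ (from $y\sim Jx^{1-\sigma}$), while $R_1$ contributes $\tfrac{\theta_1-\theta_2-1}{2x}$ and $R_2$ contributes $\tfrac{-\theta_1+\theta_2-1}{2x}$; in each case the sum is precisely $\tfrac{c_i}{2x}$. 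After subtracting this pole and the prescribed $g$, the leading behaviour of the remainder sits at order $x^{\sigma-1}$, and the computation yields $h_1\sim-\tfrac{\theta_1}{2J}x^{\sigma-1}$ and $h_2\sim\tfrac{\theta_2-\sigma}{2J}x^{\sigma-1}$.

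Granting the decomposition, the conclusion is routine. Since $\mathrm{Re}(\sigma)\in(0,1)$ one has $g=o(1/x)$ and $h_i\sim x^{\sigma-1}$, both integrable at $0$, so $\int_0^x g\to0$ and $\int_0^x h_i\to0$. Writing the antiderivative as $\int^x l_i=\tfrac{c_i}{2}\log x+\int_0^x g+\int_0^x h_i$, with the additive constant absorbed into the free normalization $k_i^0$, and exponentiating gives $k_i(x)=k_i^0\,x^{c_i/2}\,(1+f_0(x))\,\exp\!\big(\int_0^x h_i\big)$, where by definition $\exp\!\big(\int_0^x g\big)=1+f_0(x)$ and $\exp\!\big(\int_0^x h_i\big)=1+(\text{coeff}_i)\tfrac{x^\sigma}{\sigma}+o(x^\sigma)$. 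Inserting $\text{coeff}_1=-\tfrac{\theta_1}{2J}$ and $\text{coeff}_2=\tfrac{\theta_2-\sigma}{2J}$ produces exactly \eqref{k1(x)} and \eqref{k2(x)}, the stated errors $o(x^{(\theta_1-\theta_2+\sigma)/2})$ and $o(x^{(-\theta_1+\theta_2+\sigma)/2})$ coming from the $o(x^\sigma)$ relative error of the last exponential and the boundedness of $1+f_0$.

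I expect the main obstacle to be the asymptotic bookkeeping in the middle step. Because $\sigma$ is a free complex parameter, the monomials $x^{-\sigma}$, $x^{\sigma-1}$, $x^{1-2\sigma}$, $x^{2\sigma-1}$ reorder as $\mathrm{Re}(\sigma)$ crosses $\tfrac12$, so one must check uniformly that the only contribution at order $x^{\sigma-1}$ in each $h_i$ is the claimed one. The most delicate feature is that the subleading coefficient $J_2$ of $y$ enters both the logarithmic-derivative term and $g$ at order $x^{\sigma-1}$, and these $J_2$-dependent pieces must cancel inside $h_i$; correctly collecting the cross terms generated by the expansions of $1/(y-x)$ and $x/y$ — which is the source of the asymmetry between the formulas for $l_1$ and $l_2$ — is what makes the computation genuinely nontrivial rather than mechanical.
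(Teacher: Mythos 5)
Your proposal is correct and follows essentially the same route as the paper's proof: substitute Jimbo's expansion of $y(x)$ into $l_1,l_2$, split each as $\frac{c_i}{2x}$ plus the exact integrand of $f_0$ plus a remainder whose leading term is $-\frac{\theta_1}{2J}x^{\sigma-1}$ (resp.\ $\frac{\theta_2-\sigma}{2J}x^{\sigma-1}$), then integrate and exponentiate, absorbing integration constants into $k_i^0$. Your pole bookkeeping and the $x^{\sigma-1}$ coefficients agree exactly with the paper's computation (including the cross terms from $\frac{x}{y}$ and $1/(y-x)$ responsible for the asymmetry between $l_1$ and $l_2$), so there is no gap to flag.
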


\begin{proof}
    Recall that we have
        \begin{align*}
        & f(\theta_1, \theta_2, \theta_3, \theta_{\infty}; x, y) = (x - x^2)\frac{dy}{dx} + (1 - \theta_{\infty})y^2 + [(\theta_2 + \theta_{\infty})x + \theta_1 - \theta_2 - 1]y - \theta_1 x,\\
        & l_1(x) := \frac{f(\theta_1, \theta_2, \theta_3, \theta_1 + \theta_3 - \theta_2; x, y)}{2x(1-x)(1-y)y},\\
        & l_2(x) := \frac{f(-\theta_1, -\theta_2, \theta_3, \theta_3 + \theta_2 - \theta_1; x, y)}{2x(y-1)(x-y)} + \frac{\theta_2 - \theta_3}{x - 1},\\
        & k_i(x) = k_i^0\exp\left(\int^xl_i(\xi)d\xi\right), \quad k_i^0 \in \mathbb{C} \backslash \{0\}, \qquad i = 1, 2.
    \end{align*}
    
    To obtain asymptotics for $k_1(x)$, from estimation for each term of $f(\theta_1, \theta_2, \theta_3, \theta_{\infty}; x, y)$, we first note that
    \begin{equation}
        \begin{split}
            l_1(x) &= \frac{1}{2y(1-y)}\frac{dy}{dx}+ 
           \left( \frac{(\theta_2-\theta_1-\theta_3+1)y}{2x(1-y)}+O(x^{1-\sigma}) \right) \\
            &\quad +\left(\frac{(\theta_1-\theta_2-1)y}{2x(1-y)} +\frac{\theta_1-\theta_2-1}{2x}+ O(1)\right) +\left(-\frac{\theta_1}{2J}x^{\sigma-1} + o(x^{\sigma-1})\right)\\
            &=\left(\frac{1}{2y(1-y)}\frac{dy}{dx}-\frac{1-\sigma}{2x}- \frac{\theta_3 y}{2x(1-y)}\right) +  \frac{\theta_1-\theta_2-\sigma}{2x}-\frac{\theta_1}{2J}x^{\sigma-1} + o(x^{\sigma-1}) ,
        \end{split}
    \end{equation}
where $\frac{1}{2y(1-y)}\frac{dy}{dx}-\frac{1-\sigma}{2x}- \frac{\theta_3 y}{2x(1-y)}=o(x^{-1})$, as $x\rightarrow 0$. In the above identities, we used the following types of estimates
\begin{align*}
    & \frac{(\theta_2-\theta_1-\theta_3+1)y^2}{2x(x-1)(y-1)y} = \frac{(\theta_2-\theta_1-\theta_3+1)y}{2x(1-y)}+O(x^{1-\sigma}),\\
   & \frac{((\theta_1+\theta_3)x+\theta_1-\theta_2-1)y}{2x(x-1)(y-1)y} = \frac{(\theta_1-\theta_2-1)y}{2x(1-y)} +\frac{\theta_1-\theta_2-1}{2x}+ O(1), \\
    & \frac{-\theta_1x}{2x(x-1)(y-1)y} = -\frac{\theta_1}{2J}x^{\sigma-1} + o(x^{\sigma-1}).
\end{align*}
    Thus,
    \begin{align}
       \begin{split}
          k_1(x) &= k_1^0\cdot {\rm exp}\left(\int^{x} l_1(\xi)d\xi\right)\\
        &= k_1^0\cdot x^{\frac{\theta_1-\theta_2-\sigma}{2}}
        {\rm exp}\left(\int^{x} \frac{1}{2y(1-y)}\frac{dy}{d\xi}-\frac{1-\sigma}{2\xi}+ \frac{-\theta_3 y}{2\xi(1-y)}\right)\rm exp\left(-\frac{\theta_1}{2\sigma J}x^{\sigma}\right){\rm exp}\left(o(x^\sigma)\right)\\
        &= k_1^0\cdot x^{\frac{\theta_1-\theta_2-\sigma}{2}}
        (1+f_0(x))(1-\frac{\theta_1}{2\sigma J}x^\sigma) + o(x^{\frac{\theta_1-\theta_2+\sigma}{2}}),
       \end{split}
    \end{align}
where $f_0(x)={\rm exp}\left(\int_0^{x} \left(\frac{1}{2y(1-y)}\frac{dy}{d\xi}-\frac{1-\sigma}{2\xi}- \frac{\theta_3 y}{2\xi(1-y)}\right)\ d\xi\right)-1=o(1)$, as $x\rightarrow 0$, and here $k_1^0$ may represent different constants of integration in different lines.
    
Similarly, for $k_2(x)$, we have
    \begin{align}
        \begin{split}
            l_2(x)&=\left(\frac{1}{2y(1-y)}\frac{dy}{dx}+\frac{1}{2y(1-y)}\cdot\frac{x}{y}\frac{dy}{dx}+o(x^{\sigma-1})\right)+ \left(\frac{(\theta_1-\theta_2-\theta_3+1)y}{2x(1-y)}+O(1)\right)\\
            &\quad +\left(\frac{-\theta_1+\theta_2-1}{2x(1-y)} +\frac{-\theta_1+\theta_2-1}{2x(1-y)}\cdot\frac{x}{y}+o(x^{\sigma-1})\right)+\left(\frac{\theta_1}{2J}x^{\sigma-1}+o(x^{\sigma-1})\right)\nonumber \\ 
            & = \left(\frac{1}{2y(1-y)}\frac{dy}{dx}+\frac{1-\sigma}{2J}x^{\sigma-1}+o(x^{\sigma-1})\right)+ \left(\frac{(\theta_1-\theta_2-\theta_3+1)y}{2x(1-y)}+O(1)\right)\\
            &\quad +\left(\frac{-\theta_1+\theta_2-1}{2x}+\frac{(-\theta_1+\theta_2-1)y}{2x(1-y)} +\frac{-\theta_1+\theta_2-1}{2J}x^{\sigma-1}+o(x^{\sigma-1})\right)+\left(\frac{\theta_1}{2J}x^{\sigma-1}+o(x^{\sigma-1})\right)\nonumber             \nonumber \\
            &=\left(\frac{1}{2y(1-y)}\frac{dy}{dx}-\frac{1-\sigma}{2x}+ \frac{-\theta_3 y}{2x(1-y)}\right) +  \frac{-\theta_1+\theta_2-\sigma}{2x}+\frac{\theta_2-\sigma}{2J}x^{\sigma-1} + o(x^{\sigma-1}).
        \end{split}
    \end{align}
    Here we used
    \begin{align*}
         \frac{-x(x-1)^2 }{2x(1-x)(1-y)(x-y)}\frac{dy}{dx} & = \frac{1}{2y(1-y)}\frac{dy}{dx}+\frac{1}{2y(1-y)}\cdot\frac{x}{y}\frac{dy}{dx}+o(x^{\sigma-1}) \\
         & = \frac{1}{2y(1-y)}\frac{dy}{dx}+\frac{1-\sigma}{2J}x^{\sigma-1}+o(x^{\sigma-1}),\\
        \frac{(\theta_1-\theta_2+1)y}{2x(1-x)(1-y)(x-y)} &=\frac{-\theta_1+\theta_2-1}{2x(1-y)}+\frac{-\theta_1+\theta_2-1}{2x(1-y)}\cdot\frac{x}{y}+o(x^{\sigma-1}) \\
    &=\frac{-\theta_1+\theta_2-1}{2x}+\frac{(-\theta_1+\theta_2-1)y}{2x(1-y)} +\frac{-\theta_1+\theta_2-1}{2J}x^{\sigma-1}+o(x^{\sigma-1}),    
    \end{align*}
 the first of which follows from $\frac{1}{1-\frac{x}{y}} = 1+\frac{x}{y}+o(x^\sigma)$. 
    Thus, 
    \begin{align*}
       \begin{split}
          k_2(x) &= k_2^0\cdot {\rm exp}\left(\int^{x} l_2(\xi)d\xi\right)
         = k_2^0\cdot x^{\frac{-\theta_1+\theta_2-\sigma}{2}}
        (1+f_0(x))(1+\frac{\theta_2-\sigma}{2\sigma J}x^\sigma) + o(x^{\frac{-\theta_1+\theta_2+\sigma}{2}}).
       \end{split}
    \end{align*}
\end{proof}

\subsubsection{The limit of $A(x)$ in \eqref{eq:delta Phi0}}

\begin{lemma}\label{lemma, limit of A(x)}
     The entries of $\delta_2(\Phi_0)$ obtained in \eqref{eq:delta Phi0} are
     \begin{align}\label{deltaPhi0}
       (\Phi_0)_{ii} = -\theta_i, \quad  i=1,2, \quad \text{and} \quad
            (\Phi_0)_{12} = \frac{k_1^0}{k_2^0}\cdot\frac{\theta_1-\theta_2-\sigma}{2}, \quad 
            (\Phi_0)_{21} = \frac{k_2^0}{k_1^0}\cdot\frac{-\theta_1+\theta_2-\sigma}{2}.
     \end{align}
\end{lemma}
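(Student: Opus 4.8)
The plan is to treat the diagonal and off-diagonal entries of $\delta_2\Phi_0$ separately, since the conjugation $x^{\delta\Phi}\,\Omega(x)\,x^{-\delta\Phi}$ by a diagonal matrix acts very differently on the two. First I would dispose of the diagonal: because $\Omega(x) = (u_3-u_1)^{\delta\Phi}\Phi(u)(u_3-u_1)^{-\delta\Phi}$ is conjugate to $\Phi(u)$ by a diagonal matrix, it has the same diagonal part, namely $\delta\Omega = \delta\Phi = -\mathrm{diag}(\theta_1,\theta_2,\theta_3)$ by \eqref{eq:restr of diag of phi}. A further conjugation by $x^{\delta\Phi}$ does not change the diagonal, so the diagonal of $A(x)$ is the constant $-\mathrm{diag}(\theta_1,\theta_2,\theta_3)$, and passing to the limit gives $(\Phi_0)_{ii}=-\theta_i$ with no analysis needed.

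The substance is the off-diagonal computation. Writing $x^{\delta\Phi}=\mathrm{diag}(x^{-\theta_1},x^{-\theta_2},x^{-\theta_3})$ one has $(x^{\delta\Phi}\Omega(x)x^{-\delta\Phi})_{ij}=x^{\theta_j-\theta_i}\Omega_{ij}(x)$, so in particular $A(x)_{12}=x^{\theta_2-\theta_1}\Omega_{12}(x)$. I would substitute the expression $\Omega_{12}=\frac{k_1(x)}{k_2(x)}\cdot\frac{f(\theta_1,\theta_2,\theta_3,\theta_\infty;x,y)}{2(1-x)y}$ from Theorem \ref{thm: Expression of Omega in solution of PVI} and evaluate its two factors as $x\to0$. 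From the expansions in Lemma \ref{lemma k1,k2} the ratio $k_1(x)/k_2(x)$ has leading behaviour $\frac{k_1^0}{k_2^0}\,x^{\theta_1-\theta_2}\bigl(1+O(x^{\sigma})\bigr)$, the common factor $(1+f_0(x))$ cancelling and the exponents combining to exactly $\theta_1-\theta_2$. This power is precisely what cancels the prefactor $x^{\theta_2-\theta_1}$, which is the reason a finite nonzero limit survives.

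For the second factor I would use the leading asymptotics $y\sim Jx^{1-\sigma}$ and $\frac{dy}{dx}\sim J(1-\sigma)x^{-\sigma}$ from \eqref{VIasy} and divide each term of $f$ by $y$: the term $(x-x^2)\frac{dy}{dx}$ contributes $1-\sigma$, the term $[(\theta_2+\theta_\infty)x+\theta_1-\theta_2-1]y$ contributes $\theta_1-\theta_2-1$, while $(1-\theta_\infty)y^2$ and $-\theta_1 x$ are $o(1)$ (the latter equals $-\frac{\theta_1}{J}x^{\sigma}$, which vanishes since $\mathrm{Re}(\sigma)>0$). Together with $(1-x)^{-1}\to1$ this yields $\frac{f}{2(1-x)y}\to\frac{\theta_1-\theta_2-\sigma}{2}$. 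Multiplying the two limits gives $A(x)_{12}\to\frac{k_1^0}{k_2^0}\cdot\frac{\theta_1-\theta_2-\sigma}{2}$. The entry $(\Phi_0)_{21}$ is obtained by the same scheme from $\Omega_{21}=\frac{k_2(x)}{k_1(x)}\cdot\frac{f(-\theta_1,-\theta_2,\theta_3,\theta_\infty;x,y)}{2(y-x)}$ and $A(x)_{21}=x^{\theta_1-\theta_2}\Omega_{21}(x)$, with $k_2/k_1\sim\frac{k_2^0}{k_1^0}x^{\theta_2-\theta_1}$.

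The main obstacle, and the one place that needs genuine care, is the denominator $2(y-x)$ in $\Omega_{21}$: one must verify that $y$ dominates $x$ as $x\to0$ so that $y-x\sim y$. This holds because $|y/x|\sim|J|\,|x|^{-\mathrm{Re}(\sigma)}\to\infty$ under the standing assumption $\mathrm{Re}(\sigma)>0$; were $\mathrm{Re}(\sigma)=0$ the two terms would be of the same order and this dominant-balance argument would fail, which is exactly why the $\mathrm{Re}(\sigma)=0$ case is deferred to the limiting/continuation argument of Remark \ref{rem:extend sigma}. Beyond this, the only bookkeeping is to confirm that the neglected corrections---the $O(x^{\sigma})$ factor in $k_1/k_2$ and the $o(1)$ terms in the $f$-ratio---do not contribute in the limit, so that $A(x)_{21}\to\frac{k_2^0}{k_1^0}\cdot\frac{-\theta_1+\theta_2-\sigma}{2}$, completing \eqref{deltaPhi0}.
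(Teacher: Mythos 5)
Your proposal is correct and follows essentially the same route as the paper: the diagonal entries are immediate from \eqref{eq:restr of diag of phi} since diagonal conjugations preserve the diagonal, and the off-diagonal limits are obtained by combining the expansion $k_1(x)/k_2(x)\sim \frac{k_1^0}{k_2^0}x^{\theta_1-\theta_2}$ from Lemma \ref{lemma k1,k2} with the term-by-term evaluation of $f/(2(1-x)y)$ and $f/(2(y-x))$ using $y\sim Jx^{1-\sigma}$, exactly as in the paper's proof. Your explicit justification that $y-x\sim y$ under ${\rm Re}(\sigma)>0$ is a point the paper leaves implicit (it only says the $\Omega_{21}$ estimate is ``similar''), but it is an elaboration of the same argument rather than a different one.
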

\begin{proof}
 According to \eqref{eq:delta Phi0}, we have that $A(x)_{ii}=-\theta_i, i=1,2$, and 
    \begin{align}
        \begin{split}
            A(x)_{12}&=x^{-\theta_1+\theta_2}\cdot \Omega_{12},\\
A(x)_{21}&=x^{\theta_1-\theta_2}\cdot\Omega_{21}.
        \end{split}
    \end{align}  
     Then by lemma \ref{lemma k1,k2},  the leading terms of $\Omega_{12}$ and $\Omega_{21}$ are respectively given by
    \begin{align}
        \Omega_{12}&=\frac{k_1^0}{k_2^0}\cdot x^{\theta_1-\theta_2}(\frac{1-\sigma}{2}+\frac{\theta_1-\theta_2-1}{2})\cdot(1+o(1))=\frac{k_1^0}{k_2^0}\cdot\frac{\theta_1-\theta_2-\sigma}{2} x^{\theta_1-\theta_2}+o(x^{\theta_1-\theta_2})\label{Omega12} , \\  \Omega_{21}&=\frac{k_2^0}{k_1^0}\cdot x^{\theta_2-\theta_1}(\frac{1-\sigma}{2}-\frac{\theta_1-\theta_2+1}{2})\cdot(1+o(1))=\frac{k_2^0}{k_1^0}\cdot\frac{-\theta_1+\theta_2-\sigma}{2} x^{-\theta_1+\theta_2}+o(x^{-\theta_1+\theta_2})\label{Omega21} .
    \end{align}
    Here for $\Omega_{12}$, we used
    \begin{align*}
        \frac{k_1(x)}{k_2(x)} &= \frac{k_1^0}{k_2^0}\cdot x^{\theta_1-\theta_2}\cdot(1+o(1)), \\
        \frac{f(\theta_1,\theta_2,\theta_3,\theta_\infty;x,y)}{2(1-x)y} &=\frac{(x^2-x)\frac{dy}{dx}+(\theta_2-\theta_1+1)y}{2(x-1)y}+o(1) = \frac{1-\sigma}{2}+\frac{\theta_1-\theta_2-1}{2}+o(1).
    \end{align*}  
    The estimation of $\Omega_{21}$ is similar to the one for $\Omega_{12}$. Substituting \eqref{Omega12} and \eqref{Omega21} into $A(x)$, and letting $x\rightarrow 0$, we obtain \eqref{deltaPhi0}.
\end{proof}

\subsubsection{The limit of $B(x)$ in \eqref{eq:Phi0}}\label{sec: limit of B(x)}

Now we compute the limit of $B(x)$ as $x\rightarrow 0$, and finish the proof of Theorem \ref{mainthm}. According to Lemma \ref{lemma, limit of A(x)}, we have
\begin{align}
    \delta_2(\Phi_0) = 
    \left(
    \begin{matrix}
        k_1^0  & (\theta_1-\theta_2-\sigma)k_1^0 \\
        k_2^0  & (\theta_1-\theta_2+\sigma)k_2^0
    \end{matrix}
    \right)
    \left(
    \begin{matrix}
        -\frac{\theta_1+\theta_2+\sigma}{2} &0\\
        0   &-\frac{\theta_1+\theta_2-\sigma}{2}
    \end{matrix}
    \right)
    \left(
    \begin{matrix}
        k_1^0  & (\theta_1-\theta_2-\sigma)k_1^0 \\
        k_2^0  & (\theta_1-\theta_2+\sigma)k_2^0
    \end{matrix}
    \right)^{-1}.
\end{align}
Thus, 
\begin{equation*}
\begin{aligned}
    \left(x^{-\delta_2(\Phi_0)}\right)_{11} &= \ \frac{1}{2\sigma}(-\theta_1+\theta_2+\sigma)
        x^{\frac{\theta_1+\theta_2-\sigma}{2}}+
        \frac{1}{2\sigma}(\theta_1-\theta_2+\sigma) 
        x^{\frac{\theta_1+\theta_2+\sigma}{2}},\\
    \left(x^{-\delta_2(\Phi_0)}\right)_{12} &=  \  \frac{k_1^0}{2k_2^0\sigma}(\theta_1-\theta_2-\sigma)
        x^{\frac{\theta_1+\theta_2-\sigma}{2}}+
        \frac{k_1^0}{2k_2^0\sigma}(-\theta_1+\theta_2+\sigma) 
        x^{\frac{\theta_1+\theta_2+\sigma}{2}},\\
    \left(x^{-\delta_2(\Phi_0)}\right)_{21} &=  \  \frac{k_2^0}{2k_1^0\sigma}(-\theta_1+\theta_2-\sigma)
        x^{\frac{\theta_1+\theta_2-\sigma}{2}}+
        \frac{k_2^0}{2k_1^0\sigma}(\theta_1-\theta_2+\sigma) 
        x^{\frac{\theta_1+\theta_2+\sigma}{2}},\\
    \left(x^{-\delta_2(\Phi_0)}\right)_{22} &= \ \frac{1}{2\sigma}(\theta_1-\theta_2+\sigma)
        x^{\frac{\theta_1+\theta_2-\sigma}{2}}+
        \frac{1}{2\sigma}(-\theta_1+\theta_2+\sigma) 
        x^{\frac{\theta_1+\theta_2+\sigma}{2}}.
\end{aligned}
\end{equation*}
By a direct computation, we have as $x\rightarrow 0$ 
\begin{equation}\label{B1}
\begin{aligned}
        (B(x))_{13}&=\left\{\frac{-\theta_1+\theta_2+\sigma}{2\sigma}\cdot\left(x^{\frac{-\theta_1+\theta_2-\sigma}{2}}\Omega_{13}-\frac{k_1^0}{k_2^0}\cdot x^{\frac{\theta_1-\theta_2-\sigma}{2}}\Omega_{23}\right)\right.\\
        &\left.\quad +\left(x^{\frac{-\theta_1+\theta_2+\sigma}{2}}\frac{\theta_1-\theta_2+\sigma}{2\sigma}\Omega_{13}+ x^{\frac{\theta_1-\theta_2+\sigma}{2}}\frac{k_1^0(-\theta_1+\theta_2+\sigma)}{k_2^0\cdot 2\sigma}\Omega_{23}\right) \right\}\cdot\Big(1+O(x)\Big) ,
        \end{aligned}
\end{equation}
\begin{equation}\label{B2}
\begin{aligned}
        (B(x))_{31}&=\left\{\frac{\theta_1-\theta_2+\sigma}{2\sigma}\cdot\left(x^{\frac{\theta_1-\theta_2-\sigma}{2}}\Omega_{31}+\frac{k_2^0}{k_1^0}\cdot x^{\frac{-\theta_1+\theta_2-\sigma}{2}}\Omega_{32}\right)\right.\\
        &\left.\quad +\left(x^{\frac{\theta_1-\theta_2+\sigma}{2}}\cdot\frac{-\theta_1+\theta_2+\sigma}{2\sigma}\Omega_{31}+ x^{\frac{-\theta_1+\theta_2+\sigma}{2}}\cdot\frac{k_2^0(-\theta_1+\theta_2-\sigma)}{k_1^0\cdot 2\sigma}\Omega_{32}\right) \right\}\cdot\Big(1+O(x)\Big) , 
\end{aligned}
\end{equation}

\begin{equation}\label{B3}
\begin{aligned}
        (B(x))_{32}&=\left\{\frac{-\theta_1+\theta_2+\sigma}{2\sigma}\cdot\left(x^{\frac{\theta_1-\theta_2-\sigma}{2}}\frac{k_1^0}{k_2^0}\cdot\Omega_{31}+ x^{\frac{-\theta_1+\theta_2-\sigma}{2}}\Omega_{32}\right)\right.\\
        &\left.\quad +\left(x^{\frac{\theta_1-\theta_2+\sigma}{2}}\cdot\frac{k_1^0(\theta_1-\theta_2-\sigma)}{k_2^0\cdot 2\sigma}\Omega_{31}+ x^{\frac{-\theta_1+\theta_2+\sigma}{2}}\cdot\frac{\theta_1-\theta_2+\sigma}{2\sigma}\Omega_{32}\right) \right\}\cdot\Big(1+O(x)\Big),
\end{aligned}
\end{equation}

\begin{equation}\label{B4}
\begin{aligned}
        (B(x))_{23}&=\left\{\frac{-\theta_1+\theta_2-\sigma}{2\sigma}\cdot\left(x^{\frac{-\theta_1+\theta_2-\sigma}{2}}\frac{k_2^0}{k_1^0}\cdot\Omega_{13}- x^{\frac{\theta_1-\theta_2-\sigma}{2}}\Omega_{23}\right)\right.\\
        &\left.\quad +\left(x^{\frac{-\theta_1+\theta_2+\sigma}{2}}\cdot\frac{k_2^0(\theta_1-\theta_2+\sigma)}{k_1^0\cdot 2\sigma}\Omega_{13}+ x^{\frac{\theta_1-\theta_2+\sigma}{2}}\cdot\frac{-\theta_1+\theta_2+\sigma}{2\sigma}\Omega_{23}\right) \right\}\cdot\Big(1+O(x)\Big). 
\end{aligned}
\end{equation}

Similar to the proof of Lemma \ref{lemma k1,k2}, we have the following estimates for $\Omega(x)$

\begin{lemma}\label{lemma Omega}
The entries $\Omega(x)_{13},\ \Omega(x)_{23},\ \Omega(x)_{31},\ \Omega(x)_{32}$ have the following asymptotic expansions as $x\rightarrow 0$
    \begin{equation*}
    \begin{aligned}
            \Omega_{13}&=k_1^0 x^{\frac{\theta_1-\theta_2-\sigma}{2}}(1+f_0(x))\left(1-\frac{\theta_1}{2\sigma J}x^\sigma\right)\left(f_1(x)+\frac{-\theta_3-\theta_\infty+\sigma}{2}+\frac{\theta_1}{2J}x^{\sigma}\right)+o(x^{\frac{\theta_1-\theta_2+\sigma}{2}}),  \\
         \Omega_{23}&=k_2^0 x^{\frac{\theta_2-\theta_1-\sigma}{2}}(1+f_0(x))\left(1+\frac{\theta_2-\sigma}{2\sigma J}x^{\sigma}\right)\left(f_1(x)+\frac{-\theta_3-\theta_\infty+\sigma}{2}+\frac{-\theta_2+\sigma}{2J}x^{\sigma}\right)+o(x^{\frac{\theta_1-\theta_2+\sigma}{2}}) ,\\
            \Omega_{31}&=\frac{1}{k_1^0}x^{\frac{\theta_2-\theta_1-\sigma}{2}}(1+f_0(x))^{-1}\left(1+\frac{\theta_1}{2\sigma J}x^\sigma\right)\left(f_2(x)+\frac{(\theta_3-\theta_\infty+\sigma)J}{2}-\frac{\theta_1}{2}x^{\sigma}\right)+o(x^{\frac{\theta_1-\theta_2+\sigma}{2}}) ,  \\
            \Omega_{32}&=\frac{1}{k_2^0}x^{\frac{\theta_1-\theta_2-\sigma}{2}}(1+f_0(x))^{-1}\left(1-\frac{\theta_2-\sigma}{2\sigma J}x^\sigma\right)\left(-f_2(x)-\frac{(\theta_3-\theta_\infty+\sigma)J}{2}-\frac{\theta_2-\theta_3+\theta_\infty}{2}x^{\sigma}\right)+o(x^{\frac{\theta_1-\theta_2+\sigma}{2}}) . 
   \end{aligned}
\end{equation*}
Here 
   \[f_1(x)=-\frac{x}{2y}\frac{dy}{dx}-\frac{\sigma-1}{2}-\frac{(1-\theta_{\infty})y}{2}=o(1), \quad \text{as } x\rightarrow 0,\] 
   and 
   \[f_2(x)=\left(-\frac{1}{2(1-y)}\frac{dy}{dx}-\frac{(1-\theta_{\infty})y^2}{2x(1-y)}\right)x^\sigma+\frac{(1-\sigma)J}{2}+\frac{\theta_3-\theta_\infty+1}{2}\left(\frac{y}{(1-y)x}x^\sigma-J\right)=o(1), \quad \text{as } x\rightarrow 0.\] 
\end{lemma}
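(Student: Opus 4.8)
The plan is to mirror the proof of Lemma \ref{lemma k1,k2}. Reading off Theorem \ref{thm: Expression of Omega in solution of PVI}, each of the four entries has the form $\Omega_{ij} = k_i(x)^{\pm 1}\cdot G_{ij}(x)$, where $G_{ij}(x)$ is the scalar ratio $f(\cdots)/(\text{denominator})$ and the factor $k_i(x)^{\pm1}$ is already controlled. So the task reduces to expanding the four scalar functions $G_{ij}(x)$ as $x\to 0$ and then multiplying by the expansions of $k_i(x)^{\pm1}$ supplied by Lemma \ref{lemma k1,k2} (using $k_i(x)^{-1} = (k_i^0)^{-1}x^{-(\cdots)}(1+f_0(x))^{-1}(1\mp\tfrac{\cdots}{2\sigma J}x^\sigma)^{-1}+\cdots$).

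First I would insert the leading behaviour $y(x) = Jx^{1-\sigma}(1+O(x^\epsilon))$ from \eqref{VIasy}, together with $\frac{dy}{dx} = J(1-\sigma)x^{-\sigma}(1+O(x^\epsilon))$, into each numerator $f(a,b,\theta_3,d;x,y) = (x-x^2)\frac{dy}{dx} + (1-d)y^2 + [(b+d)x + a-b-1]y - ax$, expanding term by term exactly as was done for $l_1,l_2$. For $\Omega_{13}$ and $\Omega_{23}$ the useful facts are that $(x-x^2)\frac{dy}{dx}/((x-1)y)$ reproduces $-\frac{x}{2y}\frac{dy}{dx}$ (the leading piece of $f_1$), that dividing the $-ax$ term by $y\sim Jx^{1-\sigma}$ produces the explicit $x^\sigma$-correction, and that each factor $1/(x-1)$ contributes $-1+O(x)$. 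This gives $G_{ij}(x) = f_1(x) + \frac{-\theta_3-\theta_\infty+\sigma}{2} + (\text{explicit }x^\sigma\text{ term}) + o(x^\sigma)$, where $f_1$ is defined precisely so that the surviving pieces of $-\frac{x}{2y}\frac{dy}{dx}-\frac{(1-\theta_\infty)y}{2}$ recombine with the constant $\frac{\sigma-1}{2}$ into the stated leading coefficient. The entry $\Omega_{23}$, whose denominator carries the factor $2(x-y)$, additionally requires the expansion $\frac{1}{1-x/y} = 1+\frac{x}{y}+o(x^\sigma)$ already used for $k_2(x)$, which routes the $\frac{x}{y}\sim\frac{1}{J}x^\sigma$ corrections into the explicit $x^\sigma$-term.

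The entries $\Omega_{31}$ and $\Omega_{32}$ are treated the same way, the one difference being that their denominators contain a factor $x$, so $G_{ij}(x)$ itself diverges like $x^{-\sigma}$; I would factor this out and write $G_{ij}(x) = x^{-\sigma}\big(f_2(x) + \frac{(\theta_3-\theta_\infty+\sigma)J}{2} + (\text{explicit }x^\sigma\text{ term})\big) + o(x^{-\sigma})$, with $f_2$ defined so that the three $O(1)$ contributions (from $\frac{dy}{dx}$, from $y$, and from the constant $\frac{(1-\sigma)J}{2}$) cancel. The only extra point to certify is that $f_1(x)$ and $f_2(x)$ are genuinely $o(1)$, which is a one-line substitution of $y\sim Jx^{1-\sigma}$ exhibiting the designed cancellation of the leading constants. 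Finally I would multiply by $k_i(x)^{\pm1}$, collect terms, and verify the remainder is $o(x^{\frac{\theta_1-\theta_2+\sigma}{2}})$. I expect this last error-bookkeeping to be the main obstacle: one must check that the relative error $O(x^\epsilon)$ from \eqref{VIasy} perturbs the $\frac{\theta_1}{2J}x^\sigma$-type corrections only at order $O(x^{\sigma+\epsilon})$, so that after multiplication by $k_i(x)^{\pm1}\sim x^{\pm\frac{\theta_1-\theta_2-\sigma}{2}}$ it stays strictly below the target order, all while keeping $f_0,f_1,f_2$ \emph{unexpanded} (their precise form, not just their size, is needed for the later cancellations in the computation of $\Phi_0$). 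Once the decomposition of each $G_{ij}$ is in hand, the remaining algebra is routine.
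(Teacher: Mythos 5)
Your proposal is correct and follows essentially the same route as the paper's own proof: expand each numerator $f(\cdot;x,y)$ term by term using the critical behaviour of $y(x)$, absorb the $o(1)$ pieces into the unexpanded functions $f_1,f_2$ (including the $\frac{1}{1-x/y}=1+\frac{x}{y}+o(x^\sigma)$ step for $\Omega_{23}$ and the $x^{-\sigma}$ factoring for $\Omega_{31},\Omega_{32}$), and then multiply by the $k_i(x)^{\pm 1}$ expansions from Lemma \ref{lemma k1,k2}. Your remark that $f_0,f_1,f_2$ must be kept unexpanded for the later cancellations in the computation of $\Phi_0$ is exactly the point the paper relies on as well.
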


\begin{proof}
    We give the estimations for $\Omega_{13}$ and $\Omega_{23}$, and the other two terms are estimated similarly. From estimation for each term of $f(\theta_1, \theta_1-\theta_3 - \theta_{\infty}, \theta_3, \theta_{\infty}; x, y)$ and $f(-\theta_2 + \theta_3 + \theta_{\infty}, -\theta_2, \theta_3, \theta_{\infty}; x, y)$, similar to the proof of Lemma \ref{lemma k1,k2}, we have 
    \begin{align}
            \Omega_{13}&= k_1(x)\cdot\left(\left(-\frac{x}{2y}\frac{dy}{dx}-\frac{(1-\theta_{\infty})y}{2}+o(x^\sigma)\right)+\left(\frac{-\theta_3-\theta_{\infty}+1}{2}+O(x)\right)+\left(\frac{\theta_1}{2J}x^\sigma+o(x^\sigma)\right)\right)\nonumber\\
            &= k_1(x)\cdot\left(\left(-\frac{x}{2y}\frac{dy}{dx}-\frac{\sigma-1}{2}-\frac{(1-\theta_{\infty})y}{2}\right)+\frac{-\theta_3-\theta_{\infty}+\sigma}{2}+\frac{\theta_1}{2J}x^\sigma+o(x^\sigma)\right),\\
        \Omega_{23}&= k_2(x)\cdot\left(-\frac{x}{2y}\frac{dy}{dx}(1+\frac{x}{y})-\frac{(1-\theta_{\infty})y}{2}+o(x^\sigma)+\frac{-\theta_3-\theta_{\infty}+1}{2}(1+\frac{x}{y})+O(x)+
        \frac{\theta_{\infty}-\theta_2+\theta_3}{2J}x^\sigma+o(x^\sigma)\right)\nonumber\\ 
            &= k_2(x)\cdot\left(\left(-\frac{x}{2y}\frac{dy}{dx}-\frac{\sigma-1}{2}-\frac{(1-\theta_{\infty})y}{2}\right)+\frac{-\theta_3-\theta_\infty+\sigma}{2}+\frac{-\theta_2+\sigma}{2J}x^{\sigma}+o(x^\sigma)\right).
    \end{align}
Here for $\Omega_{13}$, we used 
    \begin{align*}
        \frac{(1-\theta_\infty)y^2}{2(x-1)y} &= -\frac{(1-\theta_\infty)y}{2}+o(x^\sigma), \\
        \frac{((\theta_1-\theta_3)x+\theta_\infty+\theta_3-1)y}{2(x-1)y}&=\frac{-\theta_3-\theta_\infty+1}{2}+O(x), \\
        -\frac{\theta_1x}{2(x-1)y} &= \frac{\theta_1}{2J}x^\sigma+o(x^\sigma).
    \end{align*}
And for $\Omega_{23}$, we used
\begin{align*}
    \frac{(x-x^2)}{2(x-y)}\frac{dy}{dx} &= -\frac{x}{2y}\frac{dy}{dx}(1+\frac{x}{y})+o(x^\sigma) = -\frac{x}{2y}\frac{dy}{dx}+\frac{\sigma-1}{2J}x^\sigma+o(x^\sigma),\\
    \frac{((\theta_\infty-\theta_2)x+\theta_\infty+\theta_3-1)y}{2(x-y)}&= \frac{-\theta_3-\theta_\infty+1}{2}(1+\frac{x}{y})+O(x) = \frac{-\theta_3-\theta_\infty+1}{2}+\frac{-\theta_3-\theta_\infty+1}{2J}x^\sigma+o(x^\sigma).
\end{align*}
    Assume $f_1(x)=-\frac{x}{2y}\frac{dy}{dx}-\frac{\sigma-1}{2}-\frac{(1-\theta_{\infty})y}{2} $, then $f_1(x)= o(1) \text{ as}\ x\rightarrow 0$. Using the estimation in Lemma \ref{lemma k1,k2} for $k_1(x)$ and $k_2(x)$, we obtain the desired expansion of $\Omega_{13}$ and $\Omega_{23}$.   
\end{proof}

Now we are ready to finish the proof of Theorem \ref{mainthm}.

\begin{proof}[Proof of Theorem \ref{mainthm}]
We compute the limit of $(B(x))_{13}$ as $x\rightarrow 0$, and the other three terms are similar. Following \eqref{B1}, 
\begin{equation*}
\begin{aligned}
        (B(x))_{13}&=\left\{\frac{-\theta_1+\theta_2+\sigma}{2\sigma}\cdot\left(x^{\frac{-\theta_1+\theta_2-\sigma}{2}}\Omega_{13}-\frac{k_1^0}{k_2^0}\cdot x^{\frac{\theta_1-\theta_2-\sigma}{2}}\Omega_{23}\right)\right.\\
        &\left.\quad +\left(x^{\frac{-\theta_1+\theta_2+\sigma}{2}}\frac{\theta_1-\theta_2+\sigma}{2\sigma}\Omega_{13}+ x^{\frac{\theta_1-\theta_2+\sigma}{2}}\frac{k_1^0(-\theta_1+\theta_2+\sigma)}{k_2^0\cdot 2\sigma}\Omega_{23}\right) \right\}\cdot\Big(1+O(x)\Big) .
        \end{aligned}
\end{equation*}
Using Lemma \ref{lemma Omega}, we have
       \begin{align*}
        & \quad x^{\frac{-\theta_1+\theta_2-\sigma}{2}}\Omega_{13}-\frac{k_1^0}{k_2^0}\cdot x^{\frac{\theta_1-\theta_2-\sigma}{2}}\Omega_{23}\\
        & =
        k_1^0\cdot x^{-\sigma}(1+f_0(x))\left(\frac{\theta_1+\theta_2-\theta_3-\theta_\infty}{2J}\right)x^\sigma +k_1^0(1+f_0(x))\left(\ \frac{(\theta_1+\theta_2-\sigma)(\theta_3+\theta_\infty-\sigma)}{4\sigma J}\right)\\
        &\quad +k_1^0(1+f_0(x))\frac{\theta_3+\theta_{\infty}-\sigma}{2J} + o(1), \\
        &\rightarrow k_1^0\frac{(\theta_1+\theta_2-\sigma)
        (\theta_3+\theta_{\infty}+\sigma)}{4\sigma J },\end{align*}
and
          \begin{align*}
         &\quad x^{\frac{-\theta_1+\theta_2+\sigma}{2}}\cdot\frac{\theta_1-\theta_2+\sigma}{2\sigma}\Omega_{13}+x^{\frac{\theta_1-\theta_2+\sigma}{2}}\frac{k_1^0(-\theta_1+\theta_2+\sigma)}{k_2^0\cdot 2\sigma}\Omega_{23} \\
        & \rightarrow {k_1^0}\cdot \frac{\theta_1-\theta_2+\sigma}{2\sigma}\cdot\frac{-\theta_3-\theta_\infty+\sigma}{2} + {k_1^0}\cdot \frac{-\theta_1+\theta_2+\sigma}{2\sigma}\cdot\frac{-\theta_3-\theta_\infty+\sigma}{2} \\
         &= {k_1^0}\frac{-\theta_3-\theta_\infty+\sigma}{2}.
    \end{align*}

    Thus as $x\rightarrow 0$, \begin{align*}
        (B(x))_{13} &\rightarrow \frac{-\theta_1+\theta_2+\sigma}{2\sigma}\cdot k_1^0\frac{(\theta_1+\theta_2-\sigma)
        (\theta_3+\theta_{\infty}+\sigma)}{4\sigma J } + {k_1^0}\frac{-\theta_3-\theta_\infty+\sigma}{2} \\
        & = \frac{k_1^0}{2}\cdot(-\theta_3-\theta_\infty+\sigma)-\frac{k_1^0}{8\sigma^2 J}\cdot(\theta_1-\theta_2-\sigma)(\theta_1+\theta_2-\sigma)(\theta_3+\theta_{\infty}+\sigma).
    \end{align*}

\end{proof}

\subsection{A new interpretation of Jimbo's formula}\label{commdiag}
In this subsection, let us assume $y(x;\sigma,J, \theta_1,\theta_2,\theta_3,\theta_\infty)$ is a generic solution of the Painlev\'e VI equation \eqref{eq:PVI}, and $\Phi(u;\Phi_0)$ a corresponding solution of \eqref{eq:isomonodromy3}, satisfying the conditions \eqref{eq:restr of diag of phi} and \eqref{eq:restr of eigen of phi}. Let us denote by $p_{ij}$ (defined in \eqref{eq:def of pij} ) and $\theta_k \ (k=1,2,3,\infty)$ the monodromy parameters of the corresponding Fuchsion system \eqref{eq:2by2system}, and denote by $S_\pm(u,\Phi(u;\Phi_0))$ the Stokes matrices of the system \eqref{eq:3by3system}. Following the above discussion, we have the following diagram with arrows/correspondences $F, Q, G, P$

\[
\begin{tikzcd}\label{diagram: commu diagram}
{\textbf{Monodromy parameters } p_{ij},\theta_k} \ar[from= 1-1,to=3-1, "\text{Jimbo's formula in Theorem}\ \ref{thm Jimbo's formua for leading term of PVI}\ "',"\text{Arrow} \ F"] &  {}
\ar[from=1-3, to=1-1,"\text{Theorem}\ \ref{thm: relation of pij and S+-}\ (\text{\cite{Degano-Guzzetti2023}})"',"\text{Arrow}\ P"]
&  \textbf{Stokes matrices  }  S_\pm(u,\Phi(u;\Phi_0)) \ar[from=3-3,to=1-3, "\text{Theorem}\ \ref{thm: introcatformula}\ "',"\text{Arrow}\ G"] \\
                                 &  {} \text{\Huge$\circlearrowleft$}   & \\
{\textbf{Parameters of PVI }  \sigma, J, \theta_1,\theta_2,\theta_3,\theta_\infty }  &  {} \ar[from=3-1,to=3-3, "\text{Theorem} \ \ref{mainthm}\ "',"\text{Arrow}\ Q"] &  \textbf{Boundary value } \Phi_0                    
\end{tikzcd}
\]
Here following Theorem \ref{thm: introcatformula} of \cite{xu2019closure1}, the entries of $3\times 3$ Stokes matrices $S_{\pm}$ (Arrow $G$) are
 \begin{align*}
     (S_{+})_{kk} &= {\rm e}^{{\rm i}\pi\theta_k},\ k=1,2,3,\\
     (S_{+})_{12} &= -2{\rm i}\pi\cdot e^{-{\rm i}\pi\lambda_{1}^{(1)}}\cdot\frac{(\Phi_0)_{12}}{\Gamma(1+\lambda_1^{(1)}-\lambda_1^{(2)})\cdot\Gamma(1+\lambda_1^{(1)}-\lambda_2^{(2)})},\\
    (S_+)_{23} &=  2{\rm i}\pi\cdot e^{-{\rm i}\pi{\lambda_2^{(1)}}}\cdot \frac{\Gamma(1+{\lambda_1^{(2)}-\lambda_2^{(2)}})\Gamma({\lambda_1^{(2)}-\lambda_2^{(2)}})}
    {\prod_{j=1}^3\Gamma(1+{\lambda_1^{(2)}-\lambda_j^{(3)}})}\cdot \frac{\Delta_{1,3}^{1,2}\left({\Phi_0-\lambda_1^{(2)}}\right)}{\Gamma(1+\lambda_1^{(2)}-\lambda_1^{(1)})} \\
    &\quad + 2{\rm i}\pi\cdot e^{-{\rm i}\pi{\lambda_2^{(1)}}}\cdot \frac{\Gamma(1+{\lambda_2^{(2)}-\lambda_1^{(2)}})\Gamma({\lambda_2^{(2)}-\lambda_1^{(2)}})}
    {\prod_{j=1}^3\Gamma(1+{\lambda_2^{(2)}-\lambda_j^{(3)}})}\cdot \frac{\Delta_{1,3}^{1,2}\left({\Phi_0-\lambda_2^{(2)}}\right)}{\Gamma(1+\lambda_2^{(2)}-\lambda_1^{(1)})},  \\
      (S_+)_{13} &= - 2{\rm i}\pi\cdot e^{-{\rm i}\pi{\lambda_1^{(2)}}}\cdot \frac{\Gamma(1+{\lambda_1^{(2)}-\lambda_2^{(2)}})\Gamma({\lambda_1^{(2)}-\lambda_2^{(2)}})}
    {\prod_{j=1}^3\Gamma(1+{\lambda_1^{(2)}-\lambda_j^{(3)}})\cdot \Gamma(1+{\lambda_1^{(1)}-\lambda_2^{(2)}})}\cdot \frac{(\Phi_0)_{12}\cdot \Delta_{1,3}^{1,2}\left({\Phi_0-\lambda_1^{(2)}}\right)}{(\lambda_1^{(1)}-\lambda_1^{(2)})}\\
    &\quad -2{\rm i}\pi\cdot e^{-{\rm i}\pi{\lambda_2^{(2)}}}\cdot \frac{\Gamma(1+{\lambda_2^{(2)}-\lambda_1^{(2)}})\Gamma({\lambda_2^{(2)}-\lambda_1^{(2)}})}
    {\prod_{j=1}^3\Gamma(1+{\lambda_2^{(2)}-\lambda_j^{(3)}})\cdot \Gamma(1+{\lambda_1^{(1)}-\lambda_1^{(2)}})}\cdot \frac{(\Phi_0)_{12}\cdot \Delta_{1,3}^{1,2}\left({\Phi_0-\lambda_2^{(2)}}\right)}{(\lambda_1^{(1)}-\lambda_2^{(2)})},
\end{align*}  
and
\begin{align*}
(S_{-})_{kk} &= {\rm e}^{{\rm i}\pi\theta_k},\ k=1,2,3,\\
     (S_{-})_{21} &= -2{\rm i}\pi\cdot e^{-{\rm i}\pi\lambda_{2}^{(1)}}\cdot\frac{(\Phi_0)_{21}}{\Gamma(1+\lambda_1^{(2)}-\lambda_1^{(1)})\cdot\Gamma(1+\lambda_2^{(2)}-\lambda_1^{(1)})},\\
    (S_-)_{32} &=  -2{\rm i}\pi\cdot e^{-{\rm i}\pi{\lambda_3^{(2)}}}\cdot \frac{\Gamma(1+{\lambda_2^{(2)}-\lambda_1^{(2)}})\Gamma({\lambda_2^{(2)}-\lambda_1^{(2)}})}
    {\prod_{j=1}^3\Gamma(1+\lambda_j^{(3)}-\lambda_1^{(2)})}\cdot \frac{\Delta_{1,2}^{1,3}\left({\Phi_0-\lambda_1^{(2)}}\right)}{\Gamma(1+\lambda_1^{(1)}-\lambda_1^{(2)})} \\ 
    &\quad -2{\rm i}\pi\cdot e^{-{\rm i}\pi{\lambda_3^{(2)}}}\cdot \frac{\Gamma(1+{\lambda_1^{(2)}-\lambda_2^{(2)}})\Gamma({\lambda_1^{(2)}-\lambda_2^{(2)}})}
    {\prod_{j=1}^3\Gamma(1+\lambda_j^{(3)}-\lambda_2^{(2)})}\cdot \frac{\Delta_{1,2}^{1,3}\left({\Phi_0-\lambda_2^{(2)}}\right)}{\Gamma(1+\lambda_1^{(1)}-\lambda_2^{(2)})}, \\
    (S_-)_{31} &=  2{\rm i}\pi\cdot e^{{\rm i}\pi({\lambda_1^{(1)}-\lambda_1^{(2)}-\lambda_3^{(2)}})}\cdot \frac{\Gamma(1+{\lambda_2^{(2)}-\lambda_1^{(2)}})\Gamma({\lambda_2^{(2)}-\lambda_1^{(2)}})}
    {\prod_{j=1}^3\Gamma(1+{\lambda_j^{(3)}-\lambda_1^{(2)}})\cdot \Gamma(1+{\lambda_2^{(2)}-\lambda_1^{(1)}})}\cdot \frac{(\Phi_0)_{21}\cdot \Delta_{1,2}^{1,3}\left({\Phi_0-\lambda_1^{(2)}}\right)}{(\lambda_1^{(1)}-\lambda_1^{(2)})} \\ \nonumber
    &\quad 2{\rm i}\pi\cdot e^{{\rm i}\pi({\lambda_1^{(1)}-\lambda_2^{(2)}-\lambda_3^{(2)}})}\cdot \frac{\Gamma(1+{\lambda_1^{(2)}-\lambda_2^{(2)}})\Gamma({\lambda_1^{(2)}-\lambda_2^{(2)}})}
    {\prod_{j=1}^3\Gamma(1+{\lambda_j^{(3)}-\lambda_2^{(2)}})\cdot \Gamma(1+{\lambda_1^{(2)}-\lambda_1^{(1)}})}\cdot \frac{(\Phi_0)_{21}\cdot \Delta_{1,2}^{1,3}\left({\Phi_0-\lambda_2^{(2)}}\right)}{(\lambda_1^{(1)}-\lambda_2^{(2)})},
    \end{align*}
    where $\{\lambda_i^{(k)}\}_{i=1,...,k,k+1}$ are defined in Theorem \ref{thm: introcatformula}.

Beginning from any vertex in the diagram and following the arrows in the directions of $F, Q, G, P$ to return the same vertex, we obtain identity maps. In particular the composition 
\[ P \circ G \circ Q(\theta_1,\theta_2,\theta_3,\theta_4,\sigma,J)\rightarrow (p_{ij},\theta_k)\]
gives (the inverse of) Jimbo's formula in Theorem \ref{thm Jimbo's formua for leading term of PVI}, which we now check by hand as an example.

First, by a direction computation we get

\begin{lemma}
\label{prop:express pij in Stokes}
Under the composition $G\circ Q$ of the correspondences, there exist unique numbers $k_1^0,\ k_2^0\in \mathbb{C}\setminus\{0\}$ such that the Stokes matrices are expressed via the  parameters of PVI $(\sigma,J,\theta_1,\theta_2,\theta_3,\theta_\infty)$ by
     \begin{align}\label{eq: S+12}
      (S_{+})_{kk}& = {\rm e}^{{\rm i}\pi\theta_k},\qquad
       (S^{-1}_{-})_{kk} = {\rm e}^{-{\rm i}\pi\theta_k},\ k=1,2,3,\\
    \left(S_+\right)_{12}&=-\frac{k_1^0}{k_2^0} \cdot 2\mathrm{i}\pi \ \mathrm{e}^{\mathrm{i}\pi{\theta_1}}\cdot \frac{\theta_1-\theta_2-\sigma}{2\Gamma\left(1+\frac{\theta_2-\theta_1+\sigma}{2}\right)\Gamma\left(1+\frac{\theta_2-\theta_1-\sigma}{2}\right)},\\ \label{eq:S_12}
     \left(S_-^{-1}\right)_{21}&=-\frac{k_2^0}{k_1^0}\cdot 2\mathrm{i}\pi \ \mathrm{e}^{-\mathrm{i}\pi{\theta_1}}\cdot \frac{\theta_1-\theta_2+\sigma}{2\Gamma\left(1-\frac{\theta_2-\theta_1+\sigma}{2}\right)\Gamma\left(1-\frac{\theta_2-\theta_1-\sigma}{2}\right)},
\end{align}
and
\begin{align}
    (S_+)_{23}&={k_2^0}\cdot 2\mathrm{i}\pi \  \mathrm{e}^{\mathrm{i}\pi{\theta_2}}\cdot((S_+)_{23}^{(1)}+(S_+)_{23}^{(2)}), \label{eq:S+23}\\
    (S_-^{-1})_{32}&=\frac{1}{k_2^0}\cdot 2\mathrm{i}\pi\ \mathrm{e}^{-\mathrm{i}\pi{\theta_2}}\cdot((S_-)_{32}^{(1)}+(S_-)_{32}^{(2)}). \label{eq:S-32}
\end{align}
Here
\begin{equation*}
    \begin{aligned}
           (S_+)_{23}^{(1)} &= \frac{(\Gamma(1-\sigma))^2(\theta_3+\theta_{\infty}-\sigma)}{2\Gamma(1-\frac{\theta_1+\theta_2+\sigma}{2})\Gamma(1+\frac{\theta_1-\theta_2-\sigma}{2})\Gamma(1+\frac{\theta_3+\theta_{\infty}-\sigma}{2})\Gamma(1+\frac{\theta_3-\theta_{\infty}-\sigma}{2})},\\
           (S_-)_{32}^{(1)} &= \frac{(\Gamma(\sigma))^2\cdot(\theta_2-\theta_1+\sigma)(\theta_1+\theta_2+\sigma)(\theta_3-\theta_{\infty}-\sigma)}{8\Gamma(1+\frac{\theta_1+\theta_2+\sigma}{2})\Gamma(1-\frac{\theta_1-\theta_2-\sigma}{2})\Gamma(1-\frac{\theta_3+\theta_{\infty}-\sigma}{2})\Gamma(1-\frac{\theta_3-\theta_{\infty}-\sigma}{2})},\\
                 (S_+)_{23}^{(2)} &= \frac{(\Gamma(\sigma))^2\cdot(\theta_1-\theta_2+\sigma)(\theta_1+\theta_2-\sigma)(\theta_3+\theta_{\infty}+\sigma)}{8 J\cdot\Gamma(1-\frac{\theta_1+\theta_2-\sigma}{2})\Gamma(1+\frac{\theta_1-\theta_2+\sigma}{2})\Gamma(1+\frac{\theta_3+\theta_{\infty}+\sigma}{2})\Gamma(1+\frac{\theta_3-\theta_{\infty}+\sigma}{2})}, \\
      (S_-)_{32}^{(2)} &= \frac{J\cdot (\Gamma(1-\sigma))^2(-\theta_3+\theta_{\infty}-\sigma)}{2\Gamma(1+\frac{\theta_1+\theta_2-\sigma}{2})\Gamma(1-\frac{\theta_1-\theta_2+\sigma}{2})\Gamma(1-\frac{\theta_3+\theta_{\infty}+\sigma}{2})\Gamma(1-\frac{\theta_3-\theta_{\infty}+\sigma}{2})} .
    \end{aligned}
\end{equation*}

 The entries $(S_{+})_{13}$ and $(S_{-}^{-1})_{31}$ have similar explicit expressions.
     \end{lemma}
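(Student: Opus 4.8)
The plan is to obtain every entry listed in the statement by literally composing the two explicit formulas already at our disposal: Theorem~\ref{mainthm} (arrow $Q$), which expresses the conjugated boundary value $\Phi'_0=(K^0)^{-1}\Phi_0 K^0$ through $(\sigma,J,\theta_1,\theta_2,\theta_3,\theta_\infty)$, and the $n=3$ specialization of Theorem~\ref{thm: introcatformula} (arrow $G$), which expresses the Stokes entries through $\Phi_0$ and its nested eigenvalues $\lambda^{(k)}_i$. Since each formula in arrow $G$ is equivariant under conjugation of $\Phi_0$ by a diagonal matrix, the existence and uniqueness of $K^0={\rm diag}(k^0_1,k^0_2,1)$ needed here is inherited directly from Theorem~\ref{mainthm}.

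First I would identify all the eigenvalues. From \eqref{eq:restr of diag of phi} one has $\lambda^{(1)}_1=-\theta_1$, $\lambda^{(1)}_2=-\theta_2$, $\lambda^{(2)}_3=-\theta_3$; from $\sigma=\lambda^{(2)}_1-\lambda^{(2)}_2$ together with ${\rm tr}\,\delta_2\Phi_0=-\theta_1-\theta_2$ one gets $\lambda^{(2)}_{1,2}=\frac{1}{2}(-\theta_1-\theta_2\pm\sigma)$; and \eqref{eq:restr of eigen of phi} gives $\{\lambda^{(3)}_i\}=\{0,\ \frac{1}{2}(\theta_\infty-\theta_1-\theta_2-\theta_3),\ \frac{1}{2}(-\theta_\infty-\theta_1-\theta_2-\theta_3)\}$. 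Substituting these into the arguments of the Gamma factors and rewriting $1+\frac12(-a)=1-\frac12 a$ already reproduces all the Gamma denominators in the statement. The diagonal entries are then immediate: since $(\Phi_0)_{kk}=-\theta_k$, the diagonal of $S_\pm$ equals $e^{-{\rm i}\pi\delta\Phi}$, i.e. $(S_+)_{kk}=e^{{\rm i}\pi\theta_k}$ and $(S_-^{-1})_{kk}=e^{-{\rm i}\pi\theta_k}$.

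For the super- and sub-diagonal entries I would track the conjugation carefully: passing from $\Phi'_0$ to $\Phi_0=K^0\Phi'_0(K^0)^{-1}$ multiplies $(\Phi_0)_{ij}$ by $k^0_i/k^0_j$ (with $k^0_3=1$), and correspondingly the $2\times2$ minors $\Delta^{1,2}_{1,3}$ and $\Delta^{1,3}_{1,2}$ pick up single overall factors $k^0_2$ and $1/k^0_2$ respectively. This accounts for the prefactors $k^0_1/k^0_2$, $k^0_2/k^0_1$, $k^0_2$ and $1/k^0_2$ appearing in \eqref{eq: S+12}--\eqref{eq:S-32}. The entries $(S_+)_{12}$ and $(S_-^{-1})_{21}$ then follow by direct substitution of $(\Phi'_0)_{12},(\Phi'_0)_{21}$ from \eqref{111}, where for $S_-^{-1}$ I use the lower-triangularity of $S_-$ with diagonal $e^{{\rm i}\pi\theta_k}$, so that $(S_-^{-1})_{21}=-e^{-{\rm i}\pi\theta_2}(S_-)_{21}e^{-{\rm i}\pi\theta_1}$ and $(S_-^{-1})_{32}=-e^{-{\rm i}\pi\theta_3}(S_-)_{32}e^{-{\rm i}\pi\theta_2}$.

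The heart of the argument, and what I expect to be the main obstacle, is evaluating the $2\times2$ minors entering $(S_+)_{23}$ and $(S_-^{-1})_{32}$ at $i=1,2$. Expanding $\Delta^{1,2}_{1,3}(\lambda^{(2)}_i-\Phi'_0)=-(\lambda^{(2)}_i+\theta_1)(\Phi'_0)_{23}-(\Phi'_0)_{13}(\Phi'_0)_{21}$ and inserting the explicit entries from \eqref{111}, one finds two clean collapses. For $i=1$ the two coefficients coincide up to sign, so the minor equals $\frac{\theta_1-\theta_2+\sigma}{2}\big((\Phi'_0)_{13}-(\Phi'_0)_{23}\big)$; since this difference isolates the single factor $-2\sigma$, the minor factors as $\frac{(\theta_1-\theta_2+\sigma)(\theta_1+\theta_2-\sigma)(\theta_3+\theta_\infty+\sigma)}{8\sigma J}$. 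For $i=2$ the entire $J$-dependent part cancels and the minor collapses to $\frac{\sigma}{2}(\sigma-\theta_3-\theta_\infty)$. Combining these with the Gamma ratios, and using $\Gamma(1+\sigma)\Gamma(\sigma)=\sigma\,\Gamma(\sigma)^2$ together with $\Gamma(1-\sigma)\Gamma(-\sigma)=-\sigma^{-1}\Gamma(1-\sigma)^2$ to absorb the stray powers of $\sigma$, matches the $i=1$ term with $(S_+)^{(2)}_{23}$ and the $i=2$ term with $(S_+)^{(1)}_{23}$; the computation for $(S_-^{-1})_{32}$ via the analogous minor $\Delta^{1,3}_{1,2}$ is entirely parallel. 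The remaining difficulties are purely bookkeeping: fixing the signs of the minors, keeping $k^0_1,k^0_2$ in their correct places, and carrying out the two Gamma cancellations. It is precisely the factored forms displayed in the statement that guarantee all of this closes up, and the same substitutions yield the stated expressions for $(S_+)_{13}$ and $(S_-^{-1})_{31}$.
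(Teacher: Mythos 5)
Your proposal is correct and follows exactly the route the paper takes: the paper's ``proof'' of this lemma is simply the direct computation of the composition $G\circ Q$ (substituting the eigenvalues $\lambda^{(1)}_i=-\theta_i$, $\lambda^{(2)}_{1,2}=\tfrac{1}{2}(-\theta_1-\theta_2\pm\sigma)$, $\lambda^{(3)}_i\in\{0,\tfrac{1}{2}(\pm\theta_\infty-\theta_1-\theta_2-\theta_3)\}$ and the entries of $\Phi'_0$ from Theorem \ref{mainthm} into the $n=3$ Stokes formulas), which you carry out explicitly. Your key computational claims (the diagonal-conjugation factors $k^0_2$ and $1/k^0_2$ on the minors, the collapse of $\Delta^{1,2}_{1,3}(\lambda^{(2)}_1-\Phi'_0)$ to $\frac{(\theta_1-\theta_2+\sigma)(\theta_1+\theta_2-\sigma)(\theta_3+\theta_\infty+\sigma)}{8\sigma J}$ and of the $i=2$ minor to $\frac{\sigma}{2}(\sigma-\theta_3-\theta_\infty)$, and the two Gamma identities absorbing the powers of $\sigma$) all check out and reproduce the stated entries, with the $i=1$ term matching $(S_+)^{(2)}_{23}$ and the $i=2$ term matching $(S_+)^{(1)}_{23}$ exactly as you say.
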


\begin{prop}
    The explicit expressions of composition $P \circ G \circ Q(\sigma,J,\theta_1,\theta_2,\theta_3,\theta_\infty)\rightarrow (p_{ij},\theta_k)$ are as follows
    \begin{align*}
    p_{12} &=2\cos{(\pi\sigma)},\\
    p_{23} &= \frac{2}{\sin^2{(\pi\sigma)}}(\cos{(\pi\theta_1)}\cos{(\pi\theta_\infty)}+\cos{(\pi\theta_2)}\cos{(\pi\theta_3)}-\cos{(\pi\theta_1)}\cos{(\pi\theta_3)}\cos{(\pi\sigma)}-\cos{(\pi\theta_2)}\cos{(\pi\theta_\infty)}\cos{(\pi\sigma)})\\
    & \quad + 4\frac{\sin{\frac{\pi(\theta_1+\theta_2+\sigma)}{2}}\sin{\frac{\pi(\theta_1-\theta_2-\sigma)}{2}}\sin{\frac{\pi(\theta_3+\theta_{\infty}+\sigma)}{2}}\sin{\frac{\pi(\theta_3-\theta_{\infty}+\sigma)}{2}}}{\sin^2{(\pi\sigma)}}L\cdot J \\
    &\quad + 4\frac{\sin{\frac{\pi(\theta_1+\theta_2-\sigma)}{2}}\sin{\frac{\pi(\theta_1-\theta_2+\sigma)}{2}}\sin{\frac{\pi(\theta_3+\theta_{\infty}-\sigma)}{2}}\sin{\frac{\pi(\theta_3-\theta_{\infty}-\sigma)}{2}}}{\sin^2{(\pi\sigma)}}\frac{1}{L}\cdot \frac{1}{J},\\
     p_{13} &=\frac{2}{\sin^2{(\pi\sigma)}}(\cos{(\pi\theta_1)}\cos{(\pi\theta_3)}+\cos{(\pi\theta_2)}\cos{(\pi\theta_\infty)}-\cos{(\pi\theta_2)}\cos{(\pi\theta_3)}\cos{(\pi\sigma)}-\cos{(\pi\theta_1)}\cos{(\pi\theta_\infty)}\cos{(\pi\sigma)})\\
    &\quad -4{\rm e^{{\rm i}\pi\sigma}}\frac{\sin{\frac{\pi(\theta_1+\theta_2+\sigma)}{2}}\sin{\frac{\pi(\theta_1-\theta_2-\sigma)}{2}}\sin{\frac{\pi(\theta_3+\theta_{\infty}+\sigma)}{2}}\sin{\frac{\pi(\theta_3-\theta_{\infty}+\sigma)}{2}}}{\sin^2{(\pi\sigma)}}L\cdot J\\
    &\quad-4{\rm e}^{{-\rm i}\pi\sigma}\frac{\sin{\frac{\pi(\theta_1+\theta_2-\sigma)}{2}}\sin{\frac{\pi(\theta_1-\theta_2+\sigma)}{2}}\sin{\frac{\pi(\theta_3+\theta_{\infty}-\sigma)}{2}}\sin{\frac{\pi(\theta_3-\theta_{\infty}-\sigma)}{2}}}{\sin^2{(\pi\sigma)}}\frac{1}{L}\cdot \frac{1}{J}.
    \end{align*}
Here \begin{align*}
    L &= 4\sigma^2\cdot\frac{\theta_\infty+\theta_3-\sigma}{(\theta_1+\theta_2+\sigma)(-\theta_1+\theta_2+\sigma)(\theta_\infty+\theta_3+\sigma)}\cdot c, 
\end{align*}
and $c$ is defined in \eqref{eq:a,b,c,d in J}. 
\end{prop}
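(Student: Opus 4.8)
The plan is to read the proposition as the explicit evaluation of the composite arrow $P\circ G\circ Q$ and to assemble it from the one nontrivial input it still needs: the relation $p_{ij}=2\cos\pi(\theta_i-\theta_j)-(S_+)_{ij}(S_-^{-1})_{ji}$ of Theorem~\ref{thm: relation of pij and S+-} (arrow $P$). Into this I substitute the Stokes entries already expressed through the PVI data in Lemma~\ref{prop:express pij in Stokes} (the composite $G\circ Q$), so that the whole proof reduces to simplifying the three products $(S_+)_{12}(S_-^{-1})_{21}$, $(S_+)_{23}(S_-^{-1})_{32}$ and $(S_+)_{13}(S_-^{-1})_{31}$. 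The two recurring tools are the reflection formula $\Gamma(1+z)\Gamma(1-z)=\pi z/\sin(\pi z)$ and the product-to-sum identities for sines and cosines; no input beyond Lemma~\ref{prop:express pij in Stokes} is needed.

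I would do $p_{12}$ first as the model. In $(S_+)_{12}(S_-^{-1})_{21}$ the gauge ratio $k_1^0/k_2^0$ and the exponentials $e^{\pm i\pi\theta_1}$ cancel, and the four Gamma factors group into the two reflection pairs $\Gamma(1\pm\frac{\theta_2-\theta_1+\sigma}{2})$ and $\Gamma(1\pm\frac{\theta_2-\theta_1-\sigma}{2})$; after collapsing them the linear prefactors $(\theta_1-\theta_2\mp\sigma)$ cancel exactly against the residual half-argument factors, leaving $-4\sin\frac{\pi(\theta_2-\theta_1+\sigma)}{2}\sin\frac{\pi(\theta_2-\theta_1-\sigma)}{2}$. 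A single product-to-sum step rewrites this as $2\cos\pi(\theta_1-\theta_2)-2\cos\pi\sigma$, and substituting back yields $p_{12}=2\cos\pi\sigma$.

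For $p_{23}$ and (identically) $p_{13}$ each Stokes entry is a sum of two pieces, labelled $(1)$ and $(2)$ in Lemma~\ref{prop:express pij in Stokes}, so the product expands into four cross terms. The two diagonal terms $(1)(1)$ and $(2)(2)$ are $J$-independent, since the $1/J$ of $(S_+)_{23}^{(2)}$ is cancelled by the $J$ of $(S_-)_{32}^{(2)}$; in each the eight Gamma factors pair by argument, the reflection formula collapses them, the linear prefactors cancel the half-argument factors, and the identity $(\Gamma(1-\sigma)\Gamma(\sigma))^2=\pi^2/\sin^2(\pi\sigma)$ supplies the overall $\sin^{-2}(\pi\sigma)$, so each diagonal term becomes a single quadruple product of sines over $\sin^2(\pi\sigma)$. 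The two off-diagonal terms $(1)(2)$ and $(2)(1)$ carry $J$ and $1/J$; here the Gammas do not pair by argument, and I would instead reflect exactly four of the eight Gammas, namely those that, after rewritings such as $\frac{\theta_1-\theta_2-\sigma}{2}=-\frac{-\theta_1+\theta_2+\sigma}{2}$, take the form $\Gamma(1-\tfrac{\cdot}{2})$. This produces the four numerator sines $\sin\frac{\pi(\theta_1+\theta_2+\sigma)}{2}$, $\sin\frac{\pi(\theta_1-\theta_2-\sigma)}{2}$, $\sin\frac{\pi(\theta_3+\theta_\infty+\sigma)}{2}$, $\sin\frac{\pi(\theta_3-\theta_\infty+\sigma)}{2}$, while the remaining $\Gamma(1+\tfrac{\cdot}{2})$ factors reassemble precisely into the $\Gamma$-quotient of Jimbo's $c$ in \eqref{eq:a,b,c,d in J}; the leftover $\Gamma(1\pm\sigma)$-powers collapse by $(\Gamma(1-\sigma)\Gamma(1+\sigma))^2=\sigma^2\pi^2/\sin^2(\pi\sigma)$, supplying both the $\sin^{-2}(\pi\sigma)$ and the $\sigma^2$ inside $L$, and tracking the surviving polynomial factors reproduces the rational prefactor of $c$ in $L$ and hence the stated coefficients of $J$ and $1/J$. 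For $p_{13}$ the only extra bookkeeping is the exponentials $e^{\pm i\pi\sigma}$ inherited from $(S_-^{-1})_{31}$, which is exactly what attaches $e^{i\pi\sigma}$ and $e^{-i\pi\sigma}$ to the $J$- and $1/J$-terms.

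The main obstacle is the $J$-independent part of $p_{23}$ and $p_{13}$: after the reductions above it reads $2\cos\pi(\theta_2-\theta_3)-\frac{4}{\sin^2(\pi\sigma)}\big[\sin\pi w_1\cdots\sin\pi w_4+\sin\pi v_1\cdots\sin\pi v_4\big]$ for the explicit arguments $w_i,v_i$ (half-sums of $\theta_1,\theta_2,\theta_3,\theta_\infty,\sigma$), and I must show it equals $\frac{2}{\sin^2(\pi\sigma)}\big(\cos\pi\theta_1\cos\pi\theta_\infty+\cos\pi\theta_2\cos\pi\theta_3-\cos\pi\sigma(\cos\pi\theta_1\cos\pi\theta_3+\cos\pi\theta_2\cos\pi\theta_\infty)\big)$. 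This is a pure trigonometric identity, which I would clear by multiplying through by $\sin^2(\pi\sigma)$ and repeatedly applying product-to-sum formulas until both sides are written in the common basis $\{\cos\pi(\pm\theta_1\pm\theta_2\pm\theta_3\pm\theta_\infty)\}$ and its products with $\cos\pi\sigma$. The bookkeeping there is the heaviest part of the argument but is entirely mechanical, and it is the step I expect to consume most of the effort; the reorganization of the off-diagonal Gamma products into Jimbo's $c$ is fiddly but is rigidly determined by matching to the structure of \eqref{eq:a,b,c,d in J}.
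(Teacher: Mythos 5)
Your proposal takes exactly the paper's route: the paper's entire proof is the single sentence ``substitute the expressions in Lemma~\ref{prop:express pij in Stokes} into the right sides of \eqref{eq:relation of pij and S+-} in Theorem~\ref{thm: relation of pij and S+-}'', which is precisely your plan, and your worked-out details (gauge and exponential cancellations, $\Gamma(1+z)\Gamma(1-z)=\pi z/\sin\pi z$ collapsing the Gamma pairs, product-to-sum identities, the $J$-grading of the four cross terms, and the $e^{\pm\mathrm{i}\pi\sigma}$ factors entering $p_{13}$) correctly fill in the simplification the paper leaves implicit.
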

\begin{proof}
   Substitute the expressions in Lemma \ref{prop:express pij in Stokes} into right sides of \eqref{eq:relation of pij and S+-}  in Theorem \ref{thm: relation of pij and S+-}, we can get the desired expressions of $P\circ G \circ Q$. \end{proof}

To prove that $P\circ G \circ Q$ gives the inverse of Jimbo's formula, we verify 
\begin{prop}
The composition \[(F \circ P \circ G) \circ Q = {\rm Id}: (\sigma,J,\theta_1,\theta_2,\theta_3,\theta_\infty)\rightarrow (\sigma,J,\theta_1,\theta_2,\theta_3,\theta_\infty).\]
\end{prop}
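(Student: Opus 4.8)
The plan is to exploit the fact that four of the six parameters are carried through the composition essentially untouched, so that only $\sigma$ and $J$ require genuine verification. The parameters $\theta_1,\theta_2,\theta_3,\theta_\infty$ are passengers: they enter Jimbo's arrow $F$ (Theorem~\ref{thm Jimbo's formua for leading term of PVI}) only through the quantities $p_k=2\cos(\pi\theta_k)$ of \eqref{ptheta}, which are unchanged along $P\circ G\circ Q$, so $F$ returns the same $\theta_k$. For $\sigma$, the preceding Proposition computing $P\circ G\circ Q$ gives $p_{12}=2\cos(\pi\sigma)$, and Jimbo's prescription sets $\sigma=\sigma_{12}$ with the branch normalization $0\le{\rm Re}(\sigma)<1$; hence $\sigma$ is recovered directly. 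Thus the entire statement reduces to checking that $F$ reproduces the input $J$ from the explicit $p_{ij}$'s just computed.

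First I would rewrite Jimbo's $J$-formula in a form that isolates the target. By \eqref{eq:expansion of J} the leading coefficient equals $J$ precisely when $\hat s=\frac{(\theta_1+\theta_2+\sigma)(-\theta_1+\theta_2+\sigma)(\theta_\infty+\theta_3+\sigma)}{4\sigma^2(\theta_\infty+\theta_3-\sigma)J}$. Combining this with $\hat s=c\,s$ and the definition of the quantity $L$ (which is arranged so that $c/L$ equals exactly the product $\frac{(\theta_1+\theta_2+\sigma)(-\theta_1+\theta_2+\sigma)(\theta_\infty+\theta_3+\sigma)}{4\sigma^2(\theta_\infty+\theta_3-\sigma)}$), the desired equality collapses to the single scalar identity $s=\frac{1}{LJ}$, i.e. $a+b=\frac{d}{LJ}$, where $a,b,d$ are as in \eqref{eq:a,b,c,d in J} and the $\sigma_{jk}$ enter only through $\cos(\pi\sigma_{13})=p_{13}/2$ and $\cos(\pi\sigma_{23})=p_{23}/2$.

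The core computation is then to substitute the expressions for $p_{13},p_{23}$ from the preceding Proposition into $a+b=e^{\pi{\rm i}\sigma}\,{\rm i}\sin(\pi\sigma)\,\tfrac{p_{23}}{2}+{\rm i}\sin(\pi\sigma)\,\tfrac{p_{13}}{2}+(\text{cosine products})$ and to sort the outcome into three groups by their dependence on $J$: the terms proportional to $LJ$, those proportional to $1/(LJ)$, and those independent of $J$. I expect the $LJ$-terms to cancel outright, since $p_{13}$ and $p_{23}$ carry the same fourfold sine product $\sin\frac{\pi(\theta_1+\theta_2+\sigma)}{2}\sin\frac{\pi(\theta_1-\theta_2-\sigma)}{2}\sin\frac{\pi(\theta_3+\theta_\infty+\sigma)}{2}\sin\frac{\pi(\theta_3-\theta_\infty+\sigma)}{2}$ with coefficients $-4e^{\pi{\rm i}\sigma}$ and $+4$, while the weights $e^{\pi{\rm i}\sigma}\,{\rm i}\sin(\pi\sigma)$ and ${\rm i}\sin(\pi\sigma)$ render the two contributions opposite. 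The $1/(LJ)$-terms instead combine through $e^{\pi{\rm i}\sigma}-e^{-\pi{\rm i}\sigma}=2{\rm i}\sin(\pi\sigma)$ and, after the sign rewriting $\sin\frac{\pi(\theta_3-\theta_\infty-\sigma)}{2}=-\sin\frac{\pi(\theta_\infty-\theta_3+\sigma)}{2}$, reassemble exactly into $\frac{d}{LJ}$.

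The only slightly delicate step, and the main obstacle, is to show that the $J$-independent part vanishes identically. Writing $P=\cos(\pi\theta_1)\cos(\pi\theta_\infty)+\cos(\pi\theta_2)\cos(\pi\theta_3)$ and $R=\cos(\pi\theta_1)\cos(\pi\theta_3)+\cos(\pi\theta_2)\cos(\pi\theta_\infty)$, the common $\frac{2}{\sin^2(\pi\sigma)}(\cdots)$ parts of $p_{13},p_{23}$ together with the explicit cosine products in $a$ and $b$ organize into a single expression linear in $P$ and $R$; a short calculation using $e^{\pi{\rm i}\sigma}=\cos(\pi\sigma)+{\rm i}\sin(\pi\sigma)$ shows that the coefficients of both $P$ and $R$ are zero, for instance the $P$-coefficient is $\frac{{\rm i}}{\sin(\pi\sigma)}\bigl(e^{\pi{\rm i}\sigma}-\cos(\pi\sigma)\bigr)+1=-1+1=0$. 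Granting these three reductions, $a+b=\frac{d}{LJ}$ follows, whence $J$ is reproduced and $(F\circ P\circ G)\circ Q={\rm Id}$. The obstacle is thus purely organizational: keeping the trigonometric bookkeeping of the three groups disciplined enough that the cancellations are transparent, with no idea beyond the elementary product-to-sum identities being required.
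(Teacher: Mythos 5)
Your proposal is correct and follows essentially the same route as the paper's proof: both reduce the verification to the single identity $a+b=\frac{d}{LJ}$ by substituting the computed expressions for $p_{13},p_{23}$ into $a+b=\frac{\mathrm{i}\sin(\pi\sigma)}{2}\left(\mathrm{e}^{\mathrm{i}\pi\sigma}p_{23}+p_{13}\right)+(\text{cosine products})$, observing the cancellation of the $LJ$-terms, the recombination of the $1/(LJ)$-terms into $d/(LJ)$ via $\mathrm{e}^{\mathrm{i}\pi\sigma}-\mathrm{e}^{-\mathrm{i}\pi\sigma}=2\mathrm{i}\sin(\pi\sigma)$, and the vanishing of the $J$-independent part. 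Your explicit bookkeeping of the $J$-independent terms via the coefficients of $P$ and $R$ is a slightly more organized presentation of the same cancellation the paper carries out in one chain of equalities, and your remarks on recovering $\theta_k$ and $\sigma$ correspond to the paper's closing sentence that the other parameters follow directly from the definitions.
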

\begin{proof}     
  For $a,b,d$ defined in \eqref{eq:a,b,c,d in J}, we have
   \begin{align*}
       a+b &= \frac{{\rm i}\sin{\pi\sigma}}{2}\cdot({\rm e}^{{\rm i}\pi\sigma}p_{23}+p_{13})\\
       &\quad -{\rm e}^{{\rm i}\pi\sigma}(\cos{\pi\theta_2}\cos{\pi\theta_\infty}+\cos{\pi\theta_1}\cos{\pi\theta_3})+\cos{\pi\theta_2}\cos{\pi\theta_3}+\cos{\pi\theta_1}\cos{\pi\theta_\infty}\\
       &= \frac{{\rm i}}{2\sin{\pi\sigma}}\frac{d}{L}
       \cdot\frac{1}{J}\cdot(-{\rm e}^{{\rm i}\pi\sigma}+{\rm e}^{-{\rm i}\pi\sigma}) \\
       &\quad +\frac{{\rm i}}{\sin{\pi\sigma}}{\rm e}^{{\rm i}\pi\sigma}(\cos{\pi\theta_1}\cos{\pi\theta_\infty}+\cos{\pi\theta_2}\cos{\pi\theta_3}-\cos{\pi\theta_1}\cos{\pi\theta_3}\cos{\pi\sigma}-\cos{\pi\theta_2}\cos{\pi\theta_\infty}\cos{\pi\sigma})\\
       &\quad + \frac{{\rm i}}{\sin{\pi\sigma}}(\cos{\pi\theta_1}\cos{\pi\theta_3}+\cos{\pi\theta_2}\cos{\pi\theta_\infty}-\cos{\pi\theta_2}\cos{\pi\theta_3}\cos{\pi\sigma}-\cos{\pi\theta_1}\cos{\pi\theta_\infty}\cos{\pi\sigma})\\
       &\quad -{\rm e}^{{\rm i}\pi\sigma}(\cos{\pi\theta_2}\cos{\pi\theta_\infty}+\cos{\pi\theta_1}\cos{\pi\theta_3})+\cos{\pi\theta_2}\cos{\pi\theta_3}+\cos{\pi\theta_1}\cos{\pi\theta_\infty}\\
       &= \frac{d}{L}\cdot\frac{1}{J}.
   \end{align*}    
Thus in \eqref{eq:expansion of J}, 
   \begin{align*}
       \frac{(\theta_1+\theta_2+\sigma)(-\theta_1+\theta_2+\sigma)(\theta_\infty+\theta_3+\sigma)\cdot d}{4\sigma^2(\theta_\infty+\theta_3-\sigma)\cdot c\cdot(a+b)} = J.
   \end{align*}
   Other parameters can be obtained from the definitions directly.
\end{proof}



\medskip

\Addresses
\end{document}